\documentclass{article}

\RequirePackage{amsthm,amsmath,amsfonts,amssymb}
\RequirePackage[numbers,sort&compress]{natbib}
\RequirePackage{graphicx}

\newtheorem{theorem}{Theorem}

\newtheorem{proposition}[theorem]{Proposition}

\newtheorem{hyp}[theorem]{Assumption}
\newtheorem{lemma}[theorem]{Lemma}
\newtheorem{remark}[theorem]{Remark}
\newtheorem{corollary}[theorem]{Corollary}

\newcommand{\argmin}{\mbox{argmin}}

\newcommand{\E}{\mathbb{E}}
\newcommand{\R}{\mathbb{R}}
\newcommand{\N}{\mathbb{N}}

\newcommand{\Var}{\mbox{Var}}

\newcommand{\cM}{\mathcal{M}}

\newcommand{\score}{\rho}
\newcommand{\test}{\phi}
\newcommand{\window}{h}

\newcommand{\tangentspace}{T_x \mathcal{M}}

\newcommand{\projection}{\pi}

\renewcommand{\div}{\nabla \cdot}
\newcommand{\ABS}[1]{\left| #1 \right|}

\newcommand{\density}{f}

\title{Optimal score function estimation via derivatives constraints}
\author{Thomas Bonis\thanks{LAMA, Univ Gustave Eiffel, UPEM, Univ Paris Est Creteil, CNRS, F-77447, Marne-la-Vallée, France; E-mail: \texttt{thomas.bonis@univ-eiffel.fr}}, \quad  Thanh Mai Pham Ngoc\thanks{LAGA, CNRS, UMR 7539, Universit\'e Sorbonne Paris Nord, Villetaneuse, France; E-mail: \texttt{phamngoc@math.univ-paris13.fr}},\quad  Viet Chi Tran\thanks{Univ. Lille, CNRS, Inria, UMR 8524 - Laboratoire Paul Painlevé, F-59000 Lille, France; E-mail: \texttt{viet-chi.tran@inria.fr}}}

\begin{document}

\maketitle

\begin{abstract}
We consider the problem of score function estimation via empirical risk minimization. We first start with the question of inferring the score function of a probability measure $\mu$ with density on the flat torus from a sample of distribution $\mu$. We show that constraining the hypothesis space to a Sobolev ball is sufficient to prevent overfitting and obtaining minimax estimation rates. We then consider the problem of score function estimation in the context of score-based generative modeling (SGM) whenever the initial measure $\mu$ is supported on a submanifold. Again, when we constrain our estimators to belong to a properly size Sobolev balls, we show that score-based generative models are minimax optimal in terms of measure estimation. In particular, we obtain sufficient conditions for SGM to generalize from the data instead of over-fitting. Standard penalization approaches used for neural networks may be sufficient for such conditions to hold, providing insight on the exceptional performance of SGM.
\end{abstract}

\noindent MSC: 62G05, 62R30,\\
\noindent Keywords: Score function estimation, Nonparametric minimax estimation,
Machine learning, Manifold learning

\section{Introduction}

Score-based generative modeling denotes a family of algorithms aiming at sampling new data points from an unknown probability measure $\mu$ in $\R^D$, possibly supported on a $d$-dimensional sub-manifold $\mathcal{M}$, using only an independent and identically distributed (i.i.d.) sample $X_1 \dots, X_n \sim \mu$, see \cite{Song2020}. In this paper, we will always denote by $d$ the dimension of interest and by $D$ the dimension of the ambient space when relevant.
The idea behind SGM is to use a diffusion process $(\overrightarrow{X}_t)_{t \geq 0}$ with initial condition $X_0 \sim \mu$ and whose invariant measure $\gamma$ is the centered multivariate Gaussian measure on $\R^D$ with the identity matrix $I_D$ as covariance matrix. This diffusion can be chosen to be the Ornstein-Uhlenbeck process 
\begin{equation}\label{eq:OU}
d\overrightarrow{X}_t=-\overrightarrow{X}_t\ dt+\sqrt{2}\ dB_t,\qquad \overrightarrow{X}_0 = X \sim \mu,
\end{equation}
where $B$ is a standard $D$-dimensional Brownian motion independent from $X_0$. The components are independent one-dimensional Ornstein-Uhlenbeck processes, and there exists a unique strong solution to Equation~\eqref{eq:OU} associated to any $X_0$ and $(B_t)_{t\geq 0}$, and given for $t\geq 0$ by:
\begin{equation}
\label{eq:OUconvo}
    \overrightarrow{X}_t= e^{-t} X + \sqrt{1-e^{-2t}} Z_t,
\end{equation}
where $Z_t=\int_0^t \sqrt{2}e^{-(t-s)}dB_s / \sqrt{1-e^{-2t}}$ is a centered Gaussian random variable with covariance matrix $I_D$. 


For each $t >0$, and whatever the initial condition $\mu$, the distribution of $\overrightarrow{X}_t$ admits a density $f^{\mu_t}$ with respect to the Lebesgue measure on $\R^D$:
\begin{align}
f^{\mu_t}(x)= & \big(2\pi (1-e^{-2t})\big)^{-D/2} \int_{\cM}    e^{-\frac{\|x-x_0 e^{-t}\|^2}{2(1-e^{-2t})}}  \mu(dx_0).\label{eq:densite_h-1}
\end{align}
This allows us to define the reverse process $(\overleftarrow{X}_t)_{t \in [0,T]}$, whose initial condition is the distribution of $\overleftarrow{X}_T$, and such that for any $t \in [0,T]$, $\overrightarrow{X}_t$ and $\overleftarrow{X}_{T-t}$ follow the same distribution, see
\cite{chungwalsh,dellacheriemeyer,haussmannpardoux,nagasawa}. More precisely, for all $t\in [0,T]$, 
\[
d\overleftarrow{X}_t = \big(\overleftarrow{X}_t + 2 \score^\star_{T-t}(\overleftarrow{X}_t)\big) dt + \sqrt{2} dW_t, 
\]
where $W$ is a standard $D$-dimensional Brownian motion and where $\score^\star_t$ is the score function defined for any $t>0$ as
\begin{equation}\label{def:score-diff}
    \score^\star_t = \nabla \log f^{\mu_t}=\frac{\nabla f^{\mu_t}}{f^{\mu_t}}.
\end{equation}
Therefore, running the backward process $(\overleftarrow{X}_t)_{t \in [0,T]}$ up to time $T$ and starting from the initial distribution $\gamma$ should generate a sample close to $\mu$. Since $\mu$ is unknown, one does not have access to the score function $\score^\star_{T-t}$. The latter should be estimated from the data sample in order to run an approximate backward process using this estimate. Running this backward process then provides a sample from a measure $\hat{\mu}^{SGM}_n$. In order to evaluate the performance of this algorithm, one can then compare $\hat{\mu}^{SGM}_n$ to the target measure $\mu$. One can then  distinguish between three types of errors, see e.g. \cite{Bortoli}: 
\begin{itemize}
    \item An initialization error: the actual measure of $\overrightarrow{X}_T$ is unknown and is approximated by a Gaussian measure $\gamma$. The larger $T$ is, the smaller this error becomes. 
    \item A discretization error: the backward process $\overleftarrow{X}_{T-t}$ cannot be computed in continuous time and one must instead rely on a discrete approximation whose accuracy depends on a user-selected step-size.
    \item A statistical error: the score functions $(\score_t^\star)_{t > 0}$ are unknown and need to be estimated from the sample $X_1, \dots, X_n$.
\end{itemize}
In this paper, we are interested in the third error in order to understand the empirical success of these algorithms and possibly improve them. More precisely, we provide sufficient conditions for score function estimators usually employed in SGM to be ``optimal" in a statistical sense. \medbreak

We first start in Section \ref{section:penalisation} with the problem of estimating the score function of a probability measure $\mu$ with density $f^\mu$ from an i.i.d. sample of $\mu$. For simplicity and because we are interested in the score function associated to $\mu$, we consider the simpler case where $\mu$ is a measure with density $f^\mu$ bounded away from $0$ over the $d$-dimensional flat torus $\mathcal{T}^d$. This problem is thus equivalent to estimating both the density and its gradient. From the works of \cite{Stone-82, Comte-Sacko-Duval}, it is known that the minimax rate for estimating the $\nu$-th derivative of a density of regularity $s$ supported on $\mathbb{R}^d$ is of order $n^{-(s-\nu)/(2s+d)}$ for the $L^2$-norm. Therefore, in our case, we expect that if $f^\mu$ is $s$-times differentiable, then the score function can be estimated with a speed of order $n^{-(s-1)/(2s+d)}$. However, these optimal rates are usually achieved using non-parametric kernel density estimators, which is quite different from the empirical risk minimization approach used for SGM. Using a similar kernel smoothing approach, \cite{Wibisono2024} obtains the minimax speed for $s=2$, up to some logarithmic terms, for sub-Gaussian measures in $\R^d$. In this work, we show that such minimax rates can be achieved for all $s \geq 2$ through an empirical risk minimization similar to the one used for SGM provided the hypothesis class is constrained to an adequate Sobolev ball and adding a penalization on the ($s-1$)-th derivatives to the empirical loss. Using chaining arguments, we show that such conditions guarantee that the variance of the estimator is controlled while allowing for a small enough bias. In a way, our work can be viewed as a continuation of \cite{Silverman1982} which considered the problem of density estimation by penalized log-likelihood minimization which corresponds to estimating a density function by minimizing an empirical Kullback-Leibler divergence except here, we estimate score functions by minimizing an empirical relative Fisher information. \medbreak

We then return to the problem of estimating the score function in the diffusion setting used in SGM in Section \ref{sec:avecdiffusion}. In this case, we want to estimate the various score functions $(\score^\star_t)_{t \geq 0}$ of the measure $\mu_t$ from the i.i.d. sample $X_1, \dots, X_n \sim \mu$. Remark that, by Equation~\eqref{eq:OUconvo}, $\mu_t$ admits a $C^\infty$ density $f^{\mu_t}$ on $\R^D$ for all $t>0$. From the results of Section \ref{section:penalisation}, one could then expect to achieve the rate $n^{-(s-1)/(2s+D)}$ for any $s \in \mathbb{N}$ and thus be as close as desired to the parametric rate $n^{-1/2}$. However, remark that such a rate would involve the ambient dimension $D$ and not the intrinsic one $d$. Furthermore, when studying the performance of SGM, we require estimating score functions at all times $t > 0$ and, while $f^{\mu_t}$ is indeed in $C^\infty$ for all times $t > 0$, its derivatives usually explode when $t \rightarrow 0$. However, one can still assess the performance of a collection of score function estimators $(\hat{\score}_t)_{t \geq 0}$ with respect to the sampling problem by using it in an SGM and comparing the sampled measure $\hat{\mu}^{SGM}_n$ with $\mu$. One can then consider a distance on the space of measures, such as the Wasserstein-$1$ distance, between $\hat{\mu}^{SGM}_n$ and $\mu$ in order to evaluate the performance of the score function estimators through the lens of measure estimation. Fortunately, minimax rates for measure estimation in Wasserstein-$1$ distance are known: if $\mu$ is a probability measure with $s$-smooth density on the hypercube $[-1,1]^d$ for $d \geq 3$ then it can be estimated with a minimax rate $n^{-(s+1)/(2s + d)}$ \cite{NilesWeed2022}. Under similar assumptions on the measure $\mu$, \cite{oko} claims that the SGM approach can reach optimal rates in this measure estimation sense. Their result is derived by considering a particular structure of neural network which describes score functions of diffusions from spline density estimators. However such neural networks are completely different from the neural networks used in practice. A more thorough proof was provided by \cite{StephanovitchAaamariLevrard} for more general sub-Gaussian measures of $\R^d$. Roughly, they obtain minimax rates by showing that, if the derivatives of order $s$ of $\log(f^\mu)$ are bounded, then the derivatives of order $s$ of the score function at time $t$ are of order $1/\sqrt{t}$. The score functions can then be estimated by empirical risk minimization over a neural network structure which approximates a Sobolev ball. However, this approach does not hold whenever $\mu$ is not a probability measure with smooth density in $\R^D$ but instead a measure supported on a manifold. This assumption is important to many fields such as computer vision, where images are usually assumed not to lie on a full $\R^D$ space (where $D$ would be the number of pixels of an image) but instead live on a $d$-dimensional manifold where $d << D$. In this case, the minimax rate for measure estimation in Wasserstein distance is the same as for the smooth density case \cite{Divol2022} but instead depends on the smoothness $s$ of the density of $\mu$ with respect to the volume measure of the manifold. However, the behavior of the score functions $(\score^\star_t)_{t > 0}$ is very different as these functions (and their derivatives) explode when $t$ gets closer to $0$ \cite{Stanczuk2024}.  Extensions of the approach of \cite{oko} for score function estimation were proposed in \cite{Tang2024, azangulov, zhang2026, fu2026} but also involve ad hoc neural networks. Recent works also looked into a (kernel) smoothing as studied in \cite{Gabriel2025, lyu2025} who obtained optimal rates when $s = 1$ but again do not correspond to the empirical risk minimization approach used in practice. \medbreak

In this work, we derive convergence rates for an empirical risk estimator $\hat{\score}_t$ of $\score^\star_t$ under the assumption that $\mu$ admits a $s$-smooth density $f^\mu$ with respect to the volume measure of a compact manifold $\mathcal{M}$, see Theorem~\ref{thm:maindiffusion}. Then, we show in Corollary~\ref{cor:finalresult} that our estimator can be used in a SGM to achieve the optimal measure estimation rate in Wasserstein-$1$ distance. More precisely, we show that constraining the hypothesis class of vector fields considered in the minimization problem to belong to a Sobolev ball with radius proportional to $1/t$ is sufficient to control the variance while keeping the bias sufficiently small. The variance of our estimator can then be controlled by a chaining argument exploiting the smoothing properties of the Ornstein-Uhlenbeck semigroup. In the spirit of \cite{StephanovitchAaamariLevrard}, we control the bias by studying the regularity of the score functions, showing that they are well approximated by the  considered Sobolev ball. As the variance result does not assume any particular kind of structure regarding the hypothesis class considered, it can also be applied to neural network models used in practice showing that, as long as their derivatives with respect to the input are properly bounded then they will not overfit and will ``generalize" from the data. Of course, in practice, it would be pretty difficult to guarantee such a condition but one can imagine adding a penalization term to the empirical risk minimization problem instead to force the minimization procedure to select smooth enough score functions. In fact, usual penalizations used when training neural networks have been shown to correspond to a weighted $L^2$ penalization of the second derivative for some simple neural network structures \cite{Williams2019}. If such a result holds for more general neural network, this could be a reason why they properly estimate score functions despite their very large number of parameters. \medbreak

After recalling some notations in Section \ref{sec:notation}, we study the score estimation on the torus in Section \ref{section:penalisation} for an i.i.d. sample without dynamics. This allows to show the main ideas at stake when using empirical risk minimization for estimating the score function. In Section \ref{sec:avecdiffusion}, we return to the question of score estimation in the SGM setting. The proofs are more technical in this case and postponed to Section \ref{sec:proofs}.

\section{Notations}\label{sec:notation}


For $\nu$ a measure on a manifold $\cM$ (possibly $\R^D$) and $p\geq 1$, we will denote by $L^p(\nu)$ the space of real-valued measurable functions $g$ on $\cM$ such that 
\[
\|g\|^p_{L^p(\nu)} :=\int_{\cM}\ABS{g}^p d\nu<\infty.
\]
When $p = 2$, we also define the associated scalar product between two functions $f$ and $g$ on $\cM$:
\[
\left<f,g\right>_{L^2(\nu)} = \int_{\mathcal{M}} fg d\nu.
\]
When $\nu$ is the volume measure of a manifold $\mathcal{M}$ or the Lebesgue measure of $\R^D$, we also use the notation $L^p(E) = L^p(\nu)$, with $E=\mathcal{M}$ or $\R^D$. \medbreak

For any $k, l \in \N$ and any tensors $x=(x_{i_1,\dots i_k})\in \R^{D^k}$, $y=(y_{i_1,\dots i_l})\in \R^{D^l}$, we define the tensor product between $x$ and $y$, denoted $x \otimes y \in  \R^{D^{k+l}}$ by
\[
\forall i=(i_1,\dots i_{k+l}) \in \{1, \dots D\}^{k+l}, (x \otimes y)_i = x_{i_1, \dots, i_k} y_{i_{k+1}, \dots, i_{k+l}} .
\]
For any $x \in \R^D$ and $k \in \N$, let $x^{\otimes k} \in \R^{D^k}$ such that 
\[
\forall i =(i_1,\dots i_k) \in \{1, \dots D\}^k,\quad  x^{\otimes k}_i = x_{i_1}\dots x_{i_k}. 
\]
We also denote the scalar product between two $x,y \in (\R^D)^{\otimes k}$ by
\[
\left<x,y \right> = \sum_{i \in \{1, \dots D\}^{k}} x_i y_i
\]
and the associated Hilbert-Schmidt norm $\|x\|^2 = \left<x,x\right>$.

For any $k \in \mathbb{N}$ and any $k$-times differentiable function $g : \R^D \rightarrow \R$, we denote by $\nabla^k g$ the $k$-tensor function such that 
\[
\forall i \in \{1, \dots ,D\}^k, \quad (\nabla^k g)_i = \frac{\partial^k g}{\partial x_{i_1} \partial x_{i_2} \dots \partial x_{i_k}}
\]
and we extend this definition to any vector field $g:\R^D \rightarrow \R^D$ by
\[
\forall i \in \{1, \dots ,D\}^{k+1}, \quad (\nabla^k g)_i = \frac{\partial^k g_{i_1}}{\partial x_{i_2} \partial x_{i_3} \dots \partial x_{i_{k+1}}}.
\]
For any $k$-tensor $h$, we define the $2$ and $\infty$ norms as we would do for traditional vectors:
\[
\|h\| = \sqrt{\sum_{i \in \{1, \dots ,D\}^{k}} h_i^2}
\]
and
\[
\|h\|_\infty = \max_{i \in \{1, \dots ,D\}^{k}} |h_i|.
\]
For $s \in \N,p \in [1, \infty]$ and any measure $\nu$ on $\R^D$, we consider the Sobolev space of functions $W^{s,p}(\nu)$ defined by 
\[
W^{s,p}(\nu) = \{ g : \R^D \rightarrow \R \mid \forall k \in \{0, 1, \dots, s\}, \|\nabla^k g\| \in L^p(\nu)\}.
\]
For any $g \in W^{s,p}(\nu)$, we define the associated norm
\[
\|g\|_{W^{s,p}(\nu)} = \left( \sum_{k=0}^s \|\nabla^k g\|_{L^p(\nu)}^p \right)^{1/p}.
\]
As for $L^p$ spaces, when $\nu$ is the volume measure of a manifold $\mathcal{M}$ or the Lebesgue measure of $\R^D$, we note $W^{s,p}(E) = W^{s,p}(\nu)$ for $E=\mathcal{M}$ or $\R^D$. These definitions extend to $p=\infty$ with 
\[
\|g\|_{W^{s,\infty}(\nu)} = \sum_{k=0}^s \|\nabla^k g\|_{\infty} .
\]
We use similar notations and definitions for Sobolev space of vector fields. Finally, for any differentiable vector field $g : \R^D \rightarrow \R^D$, we define its divergence by 
\[
\div g = \sum_{i=1}^D \frac{\partial g}{\partial x_i}.
\]
\section{Score estimation without diffusion}\label{section:penalisation}
\subsection{Setting}

We study in this section the estimation of the score function $\rho^*$ associated with a probability measure $\mu$ on the flat torus $\mathcal{T}^d$ in order to avoid boundary effects. We will do so by considering a penalized or constrained optimization problem based on the empirical measure $\mu_n$ of the observations $X_1,\dots X_n$ that are i.i.d. random variables drawn from the distribution $\mu$.

\begin{hyp}\label{hyp:torus} The probability measure $\mu$ is supported on the flat $d$-dimensional torus $\mathcal{T}^d$, with a density $f^\mu$ with respect to the volume measure of $\mathcal{T}^d$. This density is supposed to belong to $ W^{s,\infty}(\mathcal{T}^d)$ with $s\geq 2$, and to be lower bounded by a positive constant $\density_{\min}>0$. 
\end{hyp}

\begin{remark}
Since the derivatives of the density are bounded and the density itself is bounded from below, there exists $R > 0$ such that
\begin{equation}\label{eq:bouleR}
    \|\score^\star\|_{W^{s-1, \infty}(\mathcal{T}^d)} = \|\nabla \log f^\mu\|_{W^{s-1, \infty}(\mathcal{T}^d)} \leq R.
\end{equation}
\end{remark}

\subsection{Score function estimator}
\label{sec:nondiffestimator}

In \cite[Theorem 1]{hyvarinien05}, it was shown that minimizing $\|\score^\star - g\|_{L^2(\mu)}$ for a vector field $g \in W^{1,2}(\mu)$ is equivalent to minimizing the following loss
\begin{equation}\label{def:lossL}
L(g) = \int_{\mathcal{T}^d} l_g \, d\mu = \E_{X \sim \mu} [l_g(X)].
\end{equation}
where
\[
\forall x \in \mathcal{T}^d, l_g =\|g(x)\|^2 +  2 \div  g(x).
\]
Indeed, this is a consequence of: 
\begin{lemma}
    \begin{equation}\label{eq:diffL}
L(g)-L(\rho^\star) =\int_{\mathcal{T}^d} \|g-\rho^\star\|^2 d\mu\geq 0.
\end{equation}
\end{lemma}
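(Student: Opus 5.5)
The plan is to expand the square $\|g-\score^\star\|^2$, integrate it against $\mu$, and turn the cross term into a divergence term by integration by parts on the torus, which has no boundary. Throughout we work under Assumption~\ref{hyp:torus}, so $f^\mu$ is bounded below by $\density_{\min}>0$ and $\score^\star=\nabla \log f^\mu = \nabla f^\mu/f^\mu$ is bounded by \eqref{eq:bouleR}. We also assume $g\in W^{1,2}(\mu)$, so that every integral below is finite.

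First we expand
\[
\int_{\mathcal{T}^d}\|g-\score^\star\|^2 d\mu = \int \|g\|^2 d\mu - 2\int \left<g,\score^\star\right> d\mu + \int \|\score^\star\|^2 d\mu .
\]
The key identity is $\left<g,\score^\star\right> f^\mu = \left<g,\nabla f^\mu\right>$. Integrating by parts on $\mathcal{T}^d$ then gives
\[
\int \left<g,\score^\star\right> d\mu = \int \left<g,\nabla f^\mu\right> = -\int (\div g)\, f^\mu = -\int \div g \, d\mu .
\]
No boundary terms appear, since the torus is compact without boundary and $f^\mu$ is periodic. This step is justified for smooth $g$ and extends to $g\in W^{1,2}$ by density, because $f^\mu \in W^{1,\infty}$ and $f^\mu$ is bounded. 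Substituting, we obtain
\[
\int\|g-\score^\star\|^2 d\mu = L(g) + \int \|\score^\star\|^2 d\mu .
\]

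Next we apply the same computation with $g=\score^\star$, which is allowed because $\score^\star \in W^{s-1,\infty}$ and $s\ge 2$. This gives $0 = L(\score^\star) + \int\|\score^\star\|^2 d\mu$, that is, $L(\score^\star) = -\int \|\score^\star\|^2 d\mu$. Subtracting this from the previous display yields \eqref{eq:diffL}, and the right-hand side is clearly nonnegative.

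The only delicate point is justifying the integration by parts when $g$ is merely in $W^{1,2}(\mu)$. This is handled by approximating $g$ by smooth periodic fields. Because $\density_{\min} \le f^\mu \le \|f^\mu\|_\infty$, the norms of $L^2(\mu)$ and $L^2(\mathcal{T}^d)$ are equivalent, so both sides of the identity pass to the limit.
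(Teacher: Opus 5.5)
Your proposal is correct and follows the paper's proof: you expand the square and integrate the cross term by parts on the boundaryless torus to get $-2\int\langle g,\rho^\star\rangle\,d\mu = 2\int \nabla\cdot g\,d\mu$, and your substitution $g=\rho^\star$ is exactly the paper's ``similar integration by parts'' on $\int\|\rho^\star\|^2\,d\mu$. Your density argument for $g\in W^{1,2}$, which uses that $f^\mu$ is bounded above and below, supplies a justification the paper leaves implicit.
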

\begin{proof}To derive this identity, let us notice that for $g\in L^2(\mu)$, we have 
\[
\int_{\mathcal{T}^d} \|g - \score^\star\|^2 d\mu = \int_{\mathcal{T}^d} \left(- 2  \left<g,\score^\star\right> + \|g\|^2 + \|\score^\star\|^2 \right) d\mu. 
\]
By an integration by parts on $\mathcal{T}^d$, we have 
\begin{align*}
\int_{\mathcal{T}^d} - 2  \left<g,\score^\star\right> d\mu & = \int - 2  \left<g, \nabla f^\mu\right> \, dx 
 = \int 2  (\div g )\, f^\mu \, dx 
 =  \int 2  (\div g) \, d\mu.
\end{align*}
Performing a similar integration by parts on $\int \|\score^\star\|^2 d\mu = \int \left<\score^\star, \score^\star\right> d\mu$, we obtain 
\[
\int \|g - \score^\star\|^2 d\mu = \int \left(2  \div (g-\score^\star)  + \|g\|^2 - \|\score^\star\|^2 \right) d\mu,
\]which implies \eqref{eq:diffL}.
\end{proof}

Provided $\score^\star \in W^{1,2}(\mu)$, we can thus define it as the minimizer of this loss as
\begin{equation}
\score^\star = \argmin_{g \in W^{1,2}(\mu)} \|\score^\star - g \|_{L^2(\mu)} = \argmin_{g \in W^{1,2}(\mu)} L(g) . \label{eq:score-argmin}
\end{equation}
Let us now introduce the empirical version of the expression of $L(g)$. For $X_1, \dots, X_n$ i.i.d. random variables drawn from $\mu$ and any vector field $g \in W^{1, \infty}(\mathcal{T}^d)$, let  
\[
\widehat{L}(g) = \frac{1}{n} \sum_{i=1}^n  l_g(X_i).
\]
Then, given a hypothesis class of vector fields $\mathcal{F} \subset W^{1, \infty}(\mathcal{T}^d)$, a reasonable way to estimate $\score^\star$ would be to find the minimizer of $\widehat{L}$ over $\mathcal{F}$. However, in order to obtain optimal rates, a penalization must be added to our loss. For $\lambda > 0$, we define 
\[
L^{\lambda}(g) = \int_{\mathcal{T}^d} l_g \, d\mu + \lambda \|\nabla^{s-1} g\|^2_{L^2(\mathcal{T}^d)}
\]
and its empirical equivalent 
\[
\widehat{L}^\lambda(g) = \frac{1}{n} \sum_{i=1}^n  l_g(X_i) + \lambda \|\nabla^{s-1} g\|^2_{L^2(\mathcal{T}^d)}.
\]
The penalization plays a smoothing role in the sense that it entices us to choose estimators with small $s-1$ derivatives. It is important to note that the norm of these derivatives is chosen with respect to the volume measure of the torus, as the measure $\mu$ is unknown, hence the $L^2(\mathcal{T}^d)$-norm.
We then define the minimizer of the penalized loss by 
\begin{equation}\label{eq:score-Llambda}
    \score^\mathcal{F} \in \argmin_{g\in \mathcal{F}} L^\lambda(g),
\end{equation}
and our estimator by
\begin{equation}\label{eq:rhohat}
    \hat{\score} \in \argmin_{g\in \mathcal{F}} \widehat{L}^\lambda(g).
\end{equation}
We finally have to discuss the hypothesis class of functions $\mathcal{F}$ considered. In practice, this class may be a parametrized class of neural networks or a reproducing kernel Hilbert space or something else depending on the application. We expect $\hat{\score}$ to be close to $\score^\star$ but if $\mathcal{F}$ is too large then $\hat{\score}$ may have a too large variance meaning that $\hat{\score}$ will be far away from $\score^\star$. On the other hand, if $\mathcal{F}$ is too small then $\score^\mathcal{F}$ itself will be far away from $\score^\star$. We now introduce a class of functions of a ``good" size. Let $h>0, \ell \in \mathbb{N}^\star$ and consider the constant $R > 0$ defined in Equation~\eqref{eq:bouleR}. Letting $C(s)$ be a constant depending only on $s$, We define $\mathcal{F}_{s, R, \window, \ell}$ the space of vector fields $g$ 
\begin{multline}\label{def:classeF}
\mathcal{F}_{s, R, \window, \ell}=\Big\{g\in W^{s+\ell-1, \infty}(\mathcal{T}^d)\ \big|\  \|g\|_{W^{1,\infty}(\mathcal{T}^d)} \leq R,\ \\
\forall k \in \{0,\dots,\ell\}, \|g\|_{W^{s-1+k, \infty}(\mathcal{T}^d)} \leq \frac{R  C(s)}{h^{k}}\Big\}.
\end{multline}
Note that the dependency of $\mathcal{F}_{s, R, \window, \ell}$ in $h$ is not so standard in mathematical statistics. Here, it indicates that we are looking for a minimizer with the same regularity than a kernel estimator with bandwidth $h$.

\begin{theorem}\label{rate-score-tore-plat}
Under Assumption \ref{hyp:torus}, if $\mathcal{F} = \mathcal{F}_{s, R, \window, \ell}$ with $\ell$ such that $2(s-1+\ell) > d$ and $\lambda < 1$, then there exists $C > 0$ independent of $n, \window$ and $\lambda$ such that the score estimator $\hat{\score}$ defined in Equation~(\ref{eq:rhohat}) verifies
\[
\E\big[\|\hat{\score} - \score^\star\|_{L^2(\mu)}^2\big] \leq  C \left( \lambda^{\frac{d-2(s+\ell-2)}{(s-1)(d+2(s+\ell-2))}}  h^{-\frac{2d\ell}{d+2(s+\ell-2)}} n^{-\frac{2(s+\ell-2)}{d+2(s+\ell-2)}} + h^{2(s-1)} + \lambda \right).
\]
In particular, for $\lambda = h^{2(s-1)}$, 
\[
\E\big[\|\hat{\score} - \score^\star\|_{L^2(\mu)}^2\big] \leq C (h^{-2 +\frac{2d(2-\ell)}{d+2(s+\ell-2)}} n^{-\frac{2(s+\ell-2)}{d+2(s+\ell-2)}}+ h^{2(s-1)})
\]
and, if in addition $h = n^{-\frac{1}{d+2s}}$,
\[
\E[\|\hat{\score} - \score^\star\|_{L^2(\mu)}^2] \leq 2C n^{ \frac{-2(s-1)}{2s+d}}.
\]
\end{theorem}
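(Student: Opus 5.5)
We split the error into a bias part and a variance part via the identity~\eqref{eq:diffL}. Write $\Delta(g) = \widehat{L}(g) - L(g)$. By definition of $\hat\score$ as a minimizer of $\widehat{L}^\lambda$ over $\mathcal{F}$, for any fixed $g_0 \in \mathcal{F}$,
\[
L^\lambda(\hat\score) \le L^\lambda(g_0) + \Delta(g_0) - \Delta(\hat\score),
\]
and subtracting $L(\score^\star)$ gives
\[
\|\hat\score-\score^\star\|^2_{L^2(\mu)} + \lambda\|\nabla^{s-1}\hat\score\|^2_{L^2} \le \|g_0-\score^\star\|^2_{L^2(\mu)} + \lambda \|\nabla^{s-1}g_0\|^2_{L^2} + \Delta(g_0)-\Delta(\hat\score).
\]
Since $\E[\Delta(g_0)] = 0$, it remains to choose $g_0$ to control the deterministic part and to bound $\E[-\Delta(\hat\score)]$, which is the supremum of a centred empirical process over $\mathcal{F}$. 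To exploit the penalty, we localise the process at the level $\|\nabla^{s-1} g\|_{L^2}$.

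\textbf{Bias.} Take $g_0 = K_h * \score^\star$, with $K_h$ a smooth convolution kernel on $\mathcal{T}^d$ of order at least $s-1$. Then $\|g_0-\score^\star\|_\infty \lesssim h^{s-1}\|\score^\star\|_{W^{s-1,\infty}} \le R h^{s-1}$, so the squared $L^2(\mu)$ error is $O(h^{2(s-1)})$. Derivatives of order $s-1+k$ fall $k$ times on the kernel, which gives $\|g_0\|_{W^{s-1+k,\infty}} \le R C(s) h^{-k}$ and $\|g_0\|_{W^{1,\infty}} \le R$. Hence $g_0\in\mathcal{F}_{s,R,h,\ell}$ for $C(s)$ chosen from the kernel. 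Moreover $\lambda\|\nabla^{s-1}g_0\|^2_{L^2} \le \lambda R^2$. This produces the terms $h^{2(s-1)}+\lambda$.

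\textbf{Variance.} Write $\Delta(\hat\score) = (\mu_n-\mu)(l_{\hat\score})$. Then
\[
l_g - l_{\score^\star} = \langle g-\score^\star, g+\score^\star\rangle + 2\div (g-\score^\star).
\]
The quadratic part is Lipschitz in $g-\score^\star$ in sup norm with constant $O(R)$. The divergence part involves one derivative. We then use Dudley chaining with bracketing or sup-norm entropy of the sets
\[
\mathcal{F}_\tau = \{g \in \mathcal{F} : \|\nabla^{s-1} g\|_{L^2}\le \tau\}.
\]
The controlling quantity is the entropy of $\{\div g\}$, that is, of first derivatives, in sup norm. By Kolmogorov--Tikhomirov, a ball of $W^{m,\infty}$ with $m=s+\ell-1$ and radius $Rh^{-\ell}$ has log-covering number at scale $\varepsilon$ of order $(Rh^{-\ell}/\varepsilon)^{d/(m-1)}$ for first derivatives. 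The condition $2(s+\ell-2)>d$, i.e.\ $d/(m-1)<2$, is exactly what makes the Dudley integral converge at $0$.

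We then interpolate (Gagliardo--Nirenberg) between the $L^2$ size $\tau$ of $\nabla^{s-1}g$, or the $L^2$ distance to $\score^\star$, and the top-order bound $h^{-\ell}$. This bounds the $L^2$ radius of the localised class. The resulting chaining bound has the form $n^{-1/2}\,\tau^{a}h^{-b}$ for some exponents $a,b$. Absorbing it into the left-hand side $\lambda\tau^2$ and $\|\hat\score-\score^\star\|^2$ by Young's inequality (peeling over dyadic $\tau$) yields the stated term
\[
\lambda^{\frac{d-2(s+\ell-2)}{(s-1)(d+2(s+\ell-2))}}\, h^{-\frac{2d\ell}{d+2(s+\ell-2)}}\, n^{-\frac{2(s+\ell-2)}{d+2(s+\ell-2)}} .
\]
The negative power of $\lambda$ reflects that the penalty controls the localisation radius.

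\textbf{Parameter choices.} The two corollary statements follow by substitution. With $\lambda=h^{2(s-1)}$, the exponent of $h$ becomes
\[
\frac{2(d-2(s+\ell-2)) - 2d\ell}{d+2(s+\ell-2)} = -2+\frac{2d(2-\ell)}{d+2(s+\ell-2)}.
\]
With $h=n^{-1/(d+2s)}$, a direct check shows the first term is $n^{-2(s-1)/(2s+d)}$.

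\textbf{Main obstacle.} The hard step is the localised chaining. One must compute the correct interpolation exponents so that the entropy integral, with the $L^2$ radius expressed through $\tau$ and $\|\hat\score-\score^\star\|$, produces exactly these powers. One must also handle the randomness of $\hat\score$ via peeling. I would first carry out the chaining with $L^\infty$ entropy on $\mathcal{F}_\tau$ and a Bernstein-type maximal inequality, and only then optimise the Young splitting.
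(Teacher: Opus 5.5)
Your bias step and the final parameter substitutions are fine; the kernel argument is the paper's Proposition~\ref{prop:biais}. The variance step, however, has a genuine gap in its choice of localisation.

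\textbf{Why the localisation fails.} The $L^2(\mu)$-variance of an increment $l_g-l_{g_0}$ is controlled by $\|u\|_{L^2(\mu)}^2+\|\nabla\cdot u\|_{L^2(\mu)}^2$, where $u=g-g_0$. It is the divergence term that must be tamed, and the penalty can only help if it controls $\nabla^{s-1}$ of the \emph{difference} $u$. Your localising quantity $\tau=\|\nabla^{s-1}g\|_{L^2}$ is not a distance to the target:
\begin{itemize}
\item it is already bounded by a constant on all of $\mathcal{F}_{s,R,h,\ell}$, through the $k=0$ constraint;
\item it does not shrink as $g\to\rho^\star$; generically it stays of order $\|\nabla^{s-1}\rho^\star\|_{L^2}>0$.
\end{itemize}
Consequently, peeling in $\tau$ has only $O(1)$ shells. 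The term $\lambda\tau^2$ on the left is offset by $\lambda\|\nabla^{s-1}g_0\|^2$ on the right, so it can absorb nothing. Also, ``interpolating between $\tau$ and the $L^2$ distance to $\rho^\star$'' is not a Gagliardo--Nirenberg inequality, which needs both endpoints for the same function. Nothing in your sketch produces the factor $1/(s-1)$ in the $\lambda$-exponent, and the exponents $a,b$ are never computed. So the displayed rate is asserted rather than derived.

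\textbf{The missing step.} You need $\lambda\|\nabla^{s-1}(\hat\rho-g_{\mathrm{ref}})\|^2_{L^2}$ on the left-hand side.
\begin{itemize}
\item The paper gets it from the population minimiser $\rho^{\mathcal F}$ and the first-order optimality of $L^\lambda$ over the convex class (Step~1 of Proposition~\ref{prop:variance}).
\item In your basic inequality it suffices to write $\|\nabla^{s-1}\hat\rho\|^2-\|\nabla^{s-1}g_0\|^2\ge\frac12\|\nabla^{s-1}(\hat\rho-g_0)\|^2-2\|\nabla^{s-1}g_0\|^2$. This costs $O(\lambda R^2)$, which is already in the bias.
\end{itemize}
Then use Gagliardo--Nirenberg, $\|\nabla u\|_{L^2}\lesssim\|u\|_{L^2}^{(s-2)/(s-1)}\|\nabla^{s-1}u\|_{L^2}^{1/(s-1)}$, together with Young's inequality and $f_{\min}\le f^\mu\le\|f^\mu\|_\infty$. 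This gives $\mathrm{Var}\big((l_g-l_{g_0})(X_1)\big)\lesssim\lambda^{-1/(s-1)}\big(\|u\|_{L^2(\mu)}^2+\lambda\|\nabla^{s-1}u\|_{L^2}^2\big)$. With your entropy bound, the localised chaining gives $\psi(r)=h^{-d\ell/(2m)}r^{1-d/(2m)}$ with $m=s+\ell-2$. The fixed point of $\sqrt n\,r^2=\lambda^{-1/(s-1)}\psi(r)$ yields $\lambda^{1/(s-1)}r_\star^2$, which is exactly the stated term; the $1/(s-1)$ comes from the interpolation exponent.

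\textbf{An alternative closer to your bounded $\tau$.} You can ignore the penalty altogether. On $\mathcal{F}$ one has $\|\nabla^{s-1}u\|_\infty\le 2RC(s)$, so the same interpolation gives $\mathrm{Var}\lesssim\|u\|_{L^2(\mu)}^{2(s-2)/(s-1)}$. The fixed point under this non-linear Bernstein condition equals, up to constants, the minimum over $\lambda$ of the first and third terms of the theorem's bound. It therefore implies the stated bound for every $\lambda$.

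\textbf{Hypothesis mismatch.} The Dudley condition you use, $2(s+\ell-2)>d$, is strictly stronger than the stated hypothesis $2(s-1+\ell)>d$. The paper's own entropy sum needs the same thing, so flag this rather than calling it exactly the hypothesis.
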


The convergence rate established in Theorem \ref{rate-score-tore-plat} can be related to minimax rates known in the nonparametric literature for estimating derivatives of a probability density with smoothness $s$. Since the seminal works of \cite{Stone-82}, and more recently in adaptive frameworks such as \cite{Comte-Sacko-Duval}, it is known that the minimax rate for estimating the $\nu$-th derivative of a density of regularity $s$ supported on $\mathbb{R}^d$ is of order $n^{-2(s-\nu)/(2s+d)}$. Recalling that the score is the derivative of the log-density - and thus a problem at least as difficult as estimating the derivative of the density - it is therefore natural to conjecture that the rate obtained here for score estimation is minimax optimal.

\begin{proof}
We have:
\[
\E\big[\|\hat{\score} - \score^\star\|_{L^2(\mu)}^2\big] \leq 2  \| \score^\mathcal{F}-\score^\star\|_{L^2(\mu)}^2 + 2 \E\big[\|\hat{\score} - \score^\mathcal{F}\|_{L^2(\mu)}^2\big].\]
These two terms in the right hand side are respectively the bias and variance and are treated in Proposition \ref{prop:biais} and Proposition \ref{prop:variance} in the coming sections.
\end{proof}

\subsection{Bias}

\begin{proposition}\label{prop:biais}
Under Assumption \ref{hyp:torus}, for any $h, \lambda \geq 0$ and $\ell \in \mathbb{N}$, if $\mathcal{F} = \mathcal{F}_{s, R, h, \ell}$ then there exists $C >0$ independent of $n, \window$ and $\lambda$ such that
\[
\|\score^\mathcal{F} - \score^\star\|_{L^2(\mu)}^2 \leq C (h^{2(s-1)} + \lambda).
\]
\end{proposition}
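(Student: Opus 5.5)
The plan is to compare $\score^\mathcal{F}$ with an explicit competitor $\tilde g\in\mathcal{F}$ chosen as a mollification of $\score^\star$ at scale $h$. Since $\score^\mathcal{F}$ minimizes $L^\lambda$ over $\mathcal{F}$, we have $L^\lambda(\score^\mathcal{F})\le L^\lambda(\tilde g)$. Combining this with identity \eqref{eq:diffL} gives
\[
\|\score^\mathcal{F}-\score^\star\|_{L^2(\mu)}^2 + \lambda\|\nabla^{s-1}\score^\mathcal{F}\|^2_{L^2(\mathcal{T}^d)} \le \|\tilde g-\score^\star\|_{L^2(\mu)}^2 + \lambda\|\nabla^{s-1}\tilde g\|^2_{L^2(\mathcal{T}^d)}.
\]
Dropping the nonnegative penalty on the left, it suffices to prove two bounds: $\|\tilde g-\score^\star\|^2_{L^2(\mu)}\le Ch^{2(s-1)}$ and $\|\nabla^{s-1}\tilde g\|_{L^2(\mathcal{T}^d)}^2\le C$.

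For the construction, I take $\tilde g = K_h * \score^\star$ on the torus, convolving componentwise. Here $K_h(x)=h^{-d}K(x/h)$, periodized, and $K$ is a smooth compactly supported kernel of order at least $s-1$, meaning $\int K=1$ and $\int x^\alpha K(x)\,dx=0$ for $1\le|\alpha|\le s-2$.

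Membership $\tilde g\in\mathcal{F}_{s,R,h,\ell}$ comes from standard convolution estimates. Derivatives up to order $s-1$ commute with the convolution, and $\|K_h\|_{L^1}=\|K\|_{L^1}$, so $\|\tilde g\|_{W^{s-1,\infty}}\le \|K\|_{L^1}\|\score^\star\|_{W^{s-1,\infty}}\le \|K\|_{L^1}R$ by \eqref{eq:bouleR}. If $K$ is chosen nonnegative when $s\le 3$, the constant is $1$ and the constraint $\|\tilde g\|_{W^{1,\infty}}\le R$ holds exactly. For $s-1+k$ derivatives with $k\ge1$, I put $s-1$ derivatives on $\score^\star$ and $k$ on the kernel, which gives $\|\nabla^{s-1+k}\tilde g\|_\infty\le \|\nabla^k K\|_{L^1} h^{-k} R$. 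Since $h\le1$ we get $\|\tilde g\|_{W^{s-1+k,\infty}}\le RC(s)h^{-k}$, where $C(s)$ is built from the $L^1$ norms of the derivatives of the fixed kernel. The penalty bound is then immediate: $\|\nabla^{s-1}\tilde g\|^2_{L^2(\mathcal{T}^d)}\le \mathrm{vol}(\mathcal{T}^d)\,C(s)^2R^2$.

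For the approximation term, $\mu$ has a bounded density, so it suffices to bound $\|\tilde g-\score^\star\|_\infty$. I write $\tilde g(x)-\score^\star(x)=\int K_h(y)\big(\score^\star(x-y)-\score^\star(x)\big)dy$ and Taylor-expand $\score^\star$ to order $s-2$ with an integral remainder of order $s-1$. This is legitimate because $\score^\star\in W^{s-1,\infty}$. The moment conditions kill the polynomial terms. The remainder is bounded by $R\|\nabla^{s-1}\score^\star\|_\infty$-type constants times $\int |y|^{s-1}|K_h(y)|dy = O(h^{s-1})$. Squaring gives $Ch^{2(s-1)}$.

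The main obstacle is the bookkeeping that makes the $W^{1,\infty}\le R$ constraint, which carries no $C(s)$ factor, compatible with a higher-order kernel. A higher-order kernel is not positive, so the convolution may slightly increase the sup norm. I would address this either by noting that $R$ in \eqref{eq:bouleR} may be taken with slack, for instance by replacing $R$ with $2R$ in the definition, or by writing $\tilde g=\score^\star+(\tilde g-\score^\star)$ and using the Taylor argument to show that $\|\tilde g-\score^\star\|_{W^{1,\infty}}=O(h^{s-2})$. When $s=2$ a positive kernel suffices, so no issue arises there. For larger $s$, the error vanishes as $h\to0$, and large $h$ is absorbed into $C$ because the bias is trivially bounded by $4R^2$.
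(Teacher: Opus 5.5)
Your proposal is correct and follows the paper's route. Like the paper, you use a kernel-smoothed $\score^\star$ at bandwidth $h$ as a competitor in $\mathcal{F}_{s,R,h,\ell}$, combine the minimality of $\score^\mathcal{F}$ for $L^\lambda$ with \eqref{eq:diffL}, bound $\|\tilde g-\score^\star\|_\infty=O(h^{s-1})$ by Taylor expansion and vanishing moments, and keep the penalty term bounded by a constant.

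Your bookkeeping is more careful than the paper's on three points. You control kernel derivatives through $L^1$ norms rather than the paper's pointwise condition $\|\nabla^k K\|\le C(s)K$, which implicitly treats $K$ as nonnegative. You acknowledge that a higher-order kernel is not positive, so the constraint $\|g\|_{W^{1,\infty}}\le R$ needs slack. And you handle large $h$ via the trivial bound. One detail: your second fix needs the strict inequality $\|\score^\star\|_{W^{1,\infty}}<R$. This holds for $s\ge3$ unless $\score^\star\equiv0$, because $R\ge\|\score^\star\|_{W^{1,\infty}}+\|\nabla^2\score^\star\|_\infty$, and $\score^\star=\nabla\log f^\mu$ with $\log f^\mu$ periodic cannot be a nonzero affine field.
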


\begin{proof}For $h>0$, let us consider the kernel approximation $\score_h$ of $\score^\star$ defined as follows. Let $K$ be a kernel function such that for all $x \in \mathcal{T}^d$,
\begin{itemize}
\item $\int_{\mathcal{T}^d} K(\|y-x\|) dy = 1$;
\item $\forall 1 \leq j \leq s-1, \int_{\mathcal{T}^d} K(\|y-x\|)(y-x)^{\otimes j} dy = 0$;
\item $\forall u\in \R, \max_{k \in \{1, \dots, \ell\}} \|\nabla^k K(u)\| \leq C(s) K(u)$. 
\end{itemize}
For $x \in \mathcal{T}^d$, we define 
\[
\score_h(x) = \int_{\mathcal{T}^d} h^{-d} K\left(\frac{\|y-x\|}{h}\right) \score^\star(y) dy. 
\]
By definition of $\score^\mathcal{F}$ as minimizer of $L^\lambda$ in \eqref{eq:score-Llambda} over the class $\mathcal{F}_{s, R, h, \ell}$ defined in \eqref{def:classeF}, if one can show that $\score_h$ belongs to the same class then one would have 
\[
L^{\lambda}(\score_h) - L(\score^\star) \geq L^{\lambda}(\score^\mathcal{F}) - L(\score^\star) \geq L(\score^\mathcal{F}) - L(\score^\star) = \|\score^\mathcal{F} - \score^\star\|_{L^2(\mu)}^2.
\]
Consequently to prove the result of the proposition, it is therefore sufficient to show that 
\begin{equation}\label{etape5}
    L^{\lambda}(\score_h) \leq C(h^{2(s-1)}+\lambda)
\end{equation}
for some $C > 0$.

We now shall prove that $\score_h$ belongs to the class $\mathcal{F}_{s, R, h, \ell}$. Since $\|\score^\star\|_{W^{s-1,\infty}(\mathcal{T}^d)} \leq R$, performing multiple integration by parts yields, for any $k \leq s-1$,
\begin{align*}
\|\nabla^{k} \score_h\|_{L^\infty(\mathcal{T}^d)} & = \left\|\int_{\mathcal{T}^d} h^{-d} \nabla^k K\left(\frac{\|y-x\|}{h}\right) \score^\star(y) dy\right\|_{L^\infty(\mathcal{T}^d)} \\
& = \left\|\int_{\mathcal{T}^d} h^{-d} K\left(\frac{\|y-x\|}{h}\right) \nabla^k \score^\star(y) dy\right\|_{L^\infty(\mathcal{T}^d)} \\
& \leq\|\nabla^k \score^\star(y)\|_{L^\infty(\mathcal{T}^d)} \int_{\mathcal{T}^d} h^{-d} K\left(\frac{\|y-x\|}{h}\right) dy 
\leq R. 
\end{align*}In particular, for $k=s-1$, 
\begin{equation}
    \|\nabla^{s-1} \score_h\|_{L^\infty(\mathcal{T}^d)} \leq R. \label{etape4}
\end{equation}
Moreover, for $k \in \{0, \dots, \ell\}$, there exists $C > 0$ such that 
\begin{align*}
\|\nabla^{s-1+k} \score_h\|_{L^\infty(\mathcal{T}^d)} & = \left\|\int_{\mathcal{T}^d} h^{-d} \nabla^{k} K\left(\frac{\|y-x\|}{h}\right) \nabla^{s-1}  \score^\star(y) dy\right\|_{L^\infty(\mathcal{T}^d)} \\
& \leq C h^{-k} \| \nabla^{s-1} \score^\star(y)\|_{L^\infty(\mathcal{T}^d)} \int_{\mathcal{T}^d} h^{-d}  K\left(\frac{\|y-x\|}{h}\right) dy  \\
& \leq C h^{-k}.
\end{align*}
Thus $\score_h$ belongs to $\mathcal{F}_{s, R, \window, \ell}$ defined in \eqref{def:classeF}. \medbreak

Finally, using a Taylor expansion, there exists $C > 0$ such that 
\begin{align*}
\|\score_h(x) - \score^\star(x)\| & \leq   \int_{\mathcal{T}^d} h^{-d} K\left(\frac{\|y-x\|}{h}\right) \|y-x\|^{s-1} \|\nabla^{s-1} \score^\star\|_{L^\infty(\mathcal{T}^d)} dy \\
& \leq R h^{s-1} \int_{\mathcal{T}^d} h^{-d} K\left(\frac{\|y-x\|}{h}\right) dy \\
& \leq R h^{s-1}. 
\end{align*}
Combining this with \eqref{etape4} yields \eqref{etape5} and finishes the proof.
\end{proof}

\subsection{Variance}
\begin{proposition}\label{prop:variance}
Under Assumption \ref{hyp:torus}, suppose $\mathcal{F} \subset \mathcal{F}_{s, R, \window, \ell} $ for some $h>0$ and $\lambda < 1$. Then, there exists $C >0$ independent of $n,\window$ and $\lambda$ such that
\[
\E[\|\hat{\score} - \score^\mathcal{F}\|_{L^2(\mu)}^2] \leq C \lambda^{\frac{d-2(s+\ell-2)}{(s-1)(d+2(s+\ell-2))}}  h^{-\frac{2d\ell}{d+2(s+\ell-2)}} n^{-\frac{2(s+\ell-2)}{d+2(s+\ell-2)}}.
\]
\end{proposition}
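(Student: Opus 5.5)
We follow the standard localized empirical-process route for penalized M-estimators, using the quadratic structure \eqref{eq:diffL} of the loss. Write $\Delta = \hat{\score} - \score^\mathcal{F}$. The basic inequality $\widehat{L}^\lambda(\hat{\score}) \leq \widehat{L}^\lambda(\score^\mathcal{F})$ gives
\[
L^\lambda(\hat{\score}) - L^\lambda(\score^\mathcal{F}) \leq (L-\widehat{L})(\hat{\score}) - (L-\widehat{L})(\score^\mathcal{F}) =: (P-P_n)(l_{\hat{\score}} - l_{\score^\mathcal{F}}).
\]

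The left-hand side controls the target. Since $L^\lambda$ is a strictly convex quadratic functional on the convex set $\mathcal{F}$ (as $\mathcal{F}_{s,R,h,\ell}$ is an intersection of norm balls), and $\score^\mathcal{F}$ minimizes it there, the first-order optimality condition yields
\[
L^\lambda(\hat{\score}) - L^\lambda(\score^\mathcal{F}) \geq \|\Delta\|_{L^2(\mu)}^2 + \lambda \|\nabla^{s-1}\Delta\|_{L^2(\mathcal{T}^d)}^2.
\]
This follows from $L(g) = L(\score^\star) + \|g-\score^\star\|^2_{L^2(\mu)}$. The increment on the right-hand side is
\[
l_{\hat{\score}} - l_{\score^\mathcal{F}} = \langle \Delta, \hat{\score}+\score^\mathcal{F}\rangle + 2\div \Delta .
\]
Its first part is Lipschitz in $\Delta$ with $L^2(\mu)$ size at most $2R\|\Delta\|_{L^2(\mu)}$. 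The genuinely hard part is $\div\Delta$, which involves a derivative.

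\textbf{Interpolation.} The derivative term is converted into the two norms controlled on the left. By Gagliardo--Nirenberg interpolation on the torus, and using that $f^\mu$ is bounded above and below so that $L^2(\mu)$ and $L^2(\mathcal{T}^d)$ are equivalent, we get
\[
\|\nabla \Delta\|_{L^2} \lesssim \|\Delta\|_{L^2}^{1-1/(s-1)}\|\nabla^{s-1}\Delta\|_{L^2}^{1/(s-1)} .
\]
This is where the factor $1/(s-1)$ in the exponent of $\lambda$ comes from.

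\textbf{Entropy.} The class $\{\Delta\}$ lies in a $W^{s-1+\ell,\infty}$-ball of radius $\asymp Rh^{-\ell}$. Hence the class of derivatives $\nabla\Delta$ lies in a ball of smoothness $\beta = s+\ell-2$ and radius $h^{-\ell}$. Its $L^\infty$ metric entropy is therefore
\[
\log N(\varepsilon) \lesssim (h^{-\ell}/\varepsilon)^{d/\beta},
\]
and the condition $2(s-1+\ell)>d$, i.e.\ $d<2\beta+2$, makes the entropy integral converge in the relevant regime.

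\textbf{Chaining and peeling.} Using this entropy bound, Dudley chaining with peeling over shells $\{\|\Delta\|_{L^2(\mu)} \le r,\ \lambda^{1/2}\|\nabla^{s-1}\Delta\|\le r\}$ bounds the local modulus. On such a shell, interpolation gives $\|\div\Delta\|_{L^2}\lesssim r\lambda^{-1/(2(s-1))}$. We then obtain
\[
\E \sup_{\text{shell}} |(P-P_n)(l_\Delta)| \lesssim n^{-1/2}\int_0^{\sigma} \sqrt{\log N(\varepsilon)}\,d\varepsilon,
\qquad \sigma \asymp r\lambda^{-1/(2(s-1))}.
\]
Since everything is uniformly bounded by $O(h^{-\ell})$ in sup norm, a Bernstein/Talagrand-type bound, or simply bounding second moments, suffices for the expectation. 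The critical radius then solves
\[
r^2 \asymp n^{-1/2}\,(h^{-\ell})^{d/(2\beta)}\,\sigma^{1-d/(2\beta)} .
\]
Solving gives
\[
r^2 \asymp \lambda^{\frac{d-2\beta}{(s-1)(d+2\beta)}}\, h^{-\frac{2d\ell}{d+2\beta}}\, n^{-\frac{2\beta}{d+2\beta}},
\]
which is exactly the stated bound with $\beta = s+\ell-2$. The peeling device then converts this into the bound on $\E\|\Delta\|_{L^2(\mu)}^2$.

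\textbf{Main obstacle.} The delicate step is the case $d>2\beta$. There the entropy integral diverges at $0$, so we must use a truncated chaining (the Dudley integral started at a level $\varepsilon_0 \approx$ the target rate). We also need to verify that the exponents still match rather than picking up logarithms. The conditions $2(s-1+\ell)>d$ and $\lambda<1$ are used precisely to make this bookkeeping close.
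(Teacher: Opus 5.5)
Your core argument is the same as the paper's. The first-order condition for $L^\lambda$ at $\score^\mathcal{F}$ puts $\|\Delta\|^2_{L^2(\mu)}+\lambda\|\nabla^{s-1}\Delta\|^2_{L^2(\mathcal{T}^d)}$ on the left of the basic inequality. Gagliardo--Nirenberg converts $\div\Delta$ into these two norms, which is where the $1/(s-1)$ comes from. The increments lie in a $W^{s+\ell-2,\infty}$-ball of radius of order $\window^{-\ell}$, with entropy $(\window^\ell\varepsilon)^{-d/\beta}$ where $\beta=s+\ell-2$. Chaining plus a fixed point then gives the rate.

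Only the packaging differs. The paper localizes in a $W^{1,2}(\mu)$-ball and applies Massart's Theorem 13.19 with $\epsilon=C\lambda^{1/(s-1)}$, so its $r_\star^2$ carries $\lambda^{-4\beta/((s-1)(d+2\beta))}$. It then absorbs $\lambda^{1/(s-1)}\Var\big((l_{\hat\score}-l_{\score^\mathcal{F}})(X_1)\big)$ into the left side via Gagliardo--Nirenberg and Young. You instead localize directly in the penalized norm and interpolate shell by shell. After its final multiplication by $\lambda^{1/(s-1)}$, the paper lands on exactly your exponent, and for $d<2\beta$ your outline is sound.

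\textbf{Where it fails: your final paragraph.} For $d>2\beta$, truncated chaining does not reproduce the exponents. Minimizing $\varepsilon_0+n^{-1/2}\window^{-\ell d/(2\beta)}\varepsilon_0^{1-d/(2\beta)}$ over $\varepsilon_0$ gives a modulus of order $\window^{-\ell}n^{-\beta/d}$, independent of $\sigma$ and hence of $\lambda$. The fixed point is then $r^2\asymp\window^{-\ell}n^{-\beta/d}$, strictly slower than $n^{-2\beta/(d+2\beta)}$; at $d=2\beta$ you lose a $\log n$.

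In fact the target bound is false there. Its $\lambda$-exponent $\frac{d-2\beta}{(s-1)(d+2\beta)}$ is positive, so the right-hand side would tend to $0$ as $\lambda\to0$. Meanwhile $\score^\mathcal{F}$ and $\hat\score$ approach the unpenalized population and empirical minimizers over $\mathcal{F}$, which in general differ for fixed $n$. Likewise, $d<2\beta+2$ does not make $\int_0\varepsilon^{-d/(2\beta)}\,d\varepsilon$ converge; that requires $d<2\beta$.

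The fix is to assume $d<2(s+\ell-2)$ and drop the truncation discussion. This is also what the paper's proof really uses: its bound $D\le C(\window^\ell r)^{-d/(2(s+\ell-2))}\sum_j 2^{j(d/(2(s+\ell-2))-1)}$ converges only under this condition, the stated $2(s-1+\ell)>d$ notwithstanding.

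Two smaller points:
\begin{itemize}
\item Take the envelope of the increments to be $O(R)$, which follows from $\|g\|_{W^{1,\infty}}\le R$, not $O(\window^{-\ell})$. At the fixed point the Bernstein-type correction equals the main term times $b\lambda^{1/(s-1)}$. This is harmless for $b=O(R)$, but of order $\window^{2-\ell}$ for $b=\window^{-\ell}$ and $\lambda=\window^{2(s-1)}$.
\item Your first-order condition, like the paper's, needs $\mathcal{F}$ itself to be convex. That holds for $\mathcal{F}=\mathcal{F}_{s,R,\window,\ell}$ but not for an arbitrary subset.
\end{itemize}
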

\begin{proof}
In the following, we denote by $C$ a generic positive constant independent of $n,\window$ and $\lambda$.
We decompose the proof in several steps.\medbreak

\noindent \textbf{Step 1:} we will first upper bound $\|\hat{\score} - \score^\mathcal{F}\|_{L^2(\mu)}^2$ and show that
\begin{equation}
  \|\hat{\score} - \score^\mathcal{F}\|_{L^2(\mu)}^2  + \lambda \|\nabla^{s-1} (\hat{\score} - \score^\mathcal{F})\|_{L^2(\mathcal{T}^d)}^2 \leq   \frac{1}{n} \sup_{g \in \mathcal{F}} \sum_{i=1}^n  \tilde{l}_g(X_i) - \tilde{l}_{\score^\mathcal{F}} (X_i),\label{but1}
\end{equation}
where for any function $g\in  \mathcal{F}$, 
\[
\tilde{l}_g = l_g - \E[l_g(X_1)].
\]

We have 
\begin{multline*}
L^{\lambda}(\hat{\score}) - L^{\lambda}(\score^\mathcal{F}) \\
\begin{aligned}
    & = \|\hat{\score} - \score^\star\|_{L^2(\mu)}^2 - \|\score^\mathcal{F} - \score^\star\|_{L^2(\mu)}^2 + \lambda \left( \|\nabla^{s-1} \hat{\score}\|_{L^2(\mathcal{T}^d)}^2 - \|\nabla^{s-1} \score^\mathcal{F}\|_{L^2(\mathcal{T}^d)}^2 \right)\\
& = \left<\hat{\score} - \score^\mathcal{F}, \hat{\score} + \score^\mathcal{F} - 2 \score^\star \right>_{L^2(\mu)} + \lambda\left< \nabla^{s-1}(\hat{\score} - \score^\mathcal{F}), \nabla^{s-1}(\hat{\score} + \score^\mathcal{F}) \right>_{L^2(\mathcal{T}^d)}\\
& = \|\hat{\score} - \score^\mathcal{F}\|^2_{L^2(\mu)} + \lambda \|\nabla^{s-1} (\hat{\score} - \score^\mathcal{F})\|_{L^2(\mathcal{T}^d)}^2 + \delta L^{\lambda}(\score^\mathcal{F}; \hat{\score}-\score^\mathcal{F}),
\end{aligned}
\end{multline*}
where $\delta L^{\lambda}$ is the Gateaux derivative of $L^{\lambda}$ whose value is 
\[
\delta L^{\lambda}(\score^\mathcal{F}; \hat{\score}-\score^\mathcal{F}) = 2 \left(\int_{\mathcal{T}^d} \left<\score^\mathcal{F} - \score^\star, \hat{\score}-\score^\mathcal{F} \right>  d\mu + \lambda \left<\nabla^{s-1} \score^\mathcal{F}, \nabla^{s-1} (\hat{\score}-\score^\mathcal{F})  \right>_{L^2(\mathcal{T}^d)}\right).
\]
Then, since $\score^\mathcal{F}$ minimizes $L^\lambda$, we have $\delta L^{\lambda}(\score^\mathcal{F}; \hat{\score}-\score^\mathcal{F}) \geq 0$ and thus 
\[
\|\hat{\score} - \score^\mathcal{F}\|_{L^2(\mu)}^2  + \lambda \|\nabla^{s-1} (\hat{\score} - \score^\mathcal{F})\|_{L^2(\mathcal{T}^d)}^2  \leq L^{\lambda}(\hat{\score}) - L^{\lambda}(\score^\mathcal{F}). 
\]
Now, since $\widehat{L}^\lambda(\score^\mathcal{F})  - \widehat{L}^\lambda(\hat{\score}) \geq 0$, we can add this term to the right hand side of the previous inequality to obtain 
\begin{multline*}
\|\hat{\score} - \score^\mathcal{F}\|_{L^2(\mu)}^2  + \lambda \|\nabla^{s-1} (\hat{\score} - \score^\mathcal{F})\|_{L^2(\mathcal{T}^d)}^2\\
\begin{aligned}
& \leq (L^{\lambda}(\hat{\score}) - L^{\lambda}(\score^\mathcal{F})) - (\widehat{L}^{\lambda}(\hat{\score}) - \widehat{L}^{\lambda}(\score^\mathcal{F})) \\
&= L(\hat{\score}) -L(\score^\mathcal{F}) -(\widehat L(\hat{\score}) - \widehat L(\score^\mathcal{F})) \\
& \leq \sup_{g\in \mathcal{F}_{s, R, \window, \ell} } (L(g) - \widehat{L}(g)) - (
L(\score^\mathcal{F}) - \widehat{L}(\score^\mathcal{F}))\\
& \leq \sup_{g \in \mathcal{F}_{s, R, \window, \ell} } \frac{1}{n}  \sum_{i=1}^n \Big(\E[l_g(X_i)] - l_g(X_i) + l_{\score^\mathcal{F}}(X_i) - \E[l_{\score^\mathcal{F}}(X_i)] \Big)\\
 &\leq  \sup_{g \in \mathcal{F}_{s, R, \window, \ell} } \frac{1}{n}  \sum_{i=1}^n  \left(\tilde{l}_g(X_i) - \tilde{l}_{\score^\mathcal{F}}(X_i)\right),
\end{aligned}
\end{multline*}
which yields \eqref{but1}.\medbreak

\noindent \textbf{Step 2:} Let us upper-bound the right hand side of \eqref{but1}. Since $\mathcal{F} \subset \mathcal{F}_{s,R,\window,\ell}$, it is possible to find a constant $C_0$ such that 
\[
\frac{1}{C_0} \sup_{g\in \mathcal{F} }\|\tilde{l}_g - \tilde{l}_{\score^\mathcal{F}}\|_{L^\infty(\mathcal{T}^d)}\leq 1.
\]
For $r>0$, let us set $\mathcal{F}_r := \mathcal{F} \cap B_{W^{1,2}(\mu)}(\score^\mathcal{F}, C_0 r)$ and let us define
\begin{equation}
S := \frac{1}{\sqrt{n}} \sup_{g\in \mathcal{F}_r} \frac{1}{C_0} \left|\sum_{i=1}^n \tilde{l}_g(X_i) - \tilde{l}_{\score^\mathcal{F}}(X_i) \right|,
\end{equation}
which is related to the right hand side of \eqref{but1}, as shall be seen later. Our purpose in this Step 2 is to bound $\E[S]$. Using the Bernstein inequality is natural to achieve this, but we follow here the chaining techniques in \cite{Massart} that will provide tighter bounds.\medbreak

For this, let us introduce for $u>0$ the entropy number $H(u)$ of the metric space $\mathcal{F}_r$ embedded with the distance
\begin{equation}\label{def:distance_ltilde}
\forall g_1, g_2 \in \mathcal{F}_r,\quad  d(g_1, g_2)^2 = \frac{1}{C_0^2 n} \sum_{i=1}^n (\tilde{l}_{g_1} - \tilde{l}_
{g_2})^2(X_i). 
\end{equation}The entropy number $H(u)$ is defined as the log of the $u$-packing number, which is the maximal number of disjoint balls of radius $u/2$ that can be packed into $\mathcal{F}_r$. From its definition, $H(\cdot)$ is a non-increasing function with respect to $u$.\\
Also, notice that for any $g_1$ and $g_2 \in \mathcal{F}_r$,
\begin{align}
\frac{1}{C_0^2}\E\left[(\tilde{l}_{g_1} - \tilde{l}_
{g_2})^2(X_1)\right] = & \frac{1}{C_0^2}\Var\left[(2  \div (g_1-g_2) + \| g_1\|^2 - \| g_2\|^2 )(X_1)\right]
\leq  C r^2,\label{etape8}
\end{align}so that $\mathcal{F}_r$ is a bounded set for the distance \eqref{def:distance_ltilde}. \medbreak

Chaining arguments require estimates on log-Laplace functionals (see conditions of Lemma 13.1 and Corollary 13.2 \cite{Massart}) that are satisfied when using Rademacher random variables. 
Let us therefore consider $\epsilon_1, \dots, \epsilon_n$ i.i.d. Rademacher random variables independent of $X_1, \dots, X_n$. By \cite[Lemma 11.4]{Massart}, we have
\begin{equation}\label{etape7}
\E [S] \leq 2\E \Bigg[\sup_{g\in \mathcal{F}_r} \frac{1}{C_0\sqrt{n}} \sum_{i=1}^n \epsilon_i (\tilde{l}_g - \tilde{l}_{\score^\mathcal{F}})(X_i)\Bigg].
\end{equation}

In view of \eqref{etape8}, we can define $\delta_n$ as 
\begin{equation}\label{def:deltan}
\delta_n^2 = \max\Big(  \sup_{g\in \mathcal{F}_{r}} \frac{1}{C_0^2 n}\sum_{i=1}^n (\tilde{l}_g - \tilde{l}_{\score^\mathcal{F}})^2(X_i),\  r^2\Big).
\end{equation}
Now following the beginning of the proof of \cite[Lemma 13.5]{Massart}
we obtain an upper bound for the right hand side of \eqref{etape7}: 
\[
\E[S] \leq C \E\left[\sum_{j=1}^\infty \delta_n 2^{-j} \sqrt{H(2^{-(j+1)} \delta_n)}\right],
\]
for some positive constant $C > 0$. Let 
\begin{equation}\label{def:D}
D = \sum_{j=1}^\infty 2^{-j} \sqrt{H(2^{-(j+1)} r)}.
\end{equation}
Since $\delta_n \geq r$ by definition, and since 
$H(\cdot)$ is a non-increasing function, we have: 
\begin{equation}\label{etape9}
\E [S]\leq C \E[\delta_n^2]^{1/2} D.
\end{equation}
Let us find an upper-bound for $\E[\delta_n^2]$.  From definition \eqref{def:deltan},
\begin{equation}\label{deltan}
\E[\delta_n^2]  \leq r^2 + \frac{1}{n} \E\Bigg[\sup_{g\in \mathcal{F}_r} \frac{1}{C_0^2} \sum_{i=1}^n \big(\tilde{l}_g-\tilde{l}_{\score^\mathcal{F}}\big)^2(X_i)\Bigg].
\end{equation}
To control the l.h.s of \eqref{deltan}, we use Theorem 11.8 \cite{Massart} and to this end we set using their notations the \textit{weak variance} $\Sigma^2$ as
$$
\Sigma^2 = \E\Bigg[\sup_{g\in \mathcal{F}_r} \frac{1}{C_0^2} \sum_{i=1}^n \big(\tilde{l}_g-\tilde{l}_{\score^\mathcal{F}}\big)^2(X_i)\Bigg],
$$
which entails that
$$
\E[\delta_n^2] \leq \frac 1 n (n r^2 + \Sigma^2).
$$
Theorem 11.8 \cite{Massart} gives an upper bound for $n r^2 + \Sigma^2$ which yields 

\begin{align}
\E[\delta_n^2]  &\leq \frac 1 n \left (2nr^2  + C \E\Bigg[\sup_{g\in \mathcal{F}_r} \frac{1}{C_0} \sum_{i=1}^n \big(\tilde{l}_g-\tilde{l}_{\score^\mathcal{F}}\big)(X_i)\Bigg] \right)  \nonumber \\
 &\leq 2 r^2 + \frac{C}{\sqrt{n}} \E [S]. \label{etape10}
\end{align}

From \eqref{etape9} and \eqref{etape10}, we obtain
\[
\E[S] \leq C D \sqrt{r^2 + \frac{\E[S]}{\sqrt{n }}}.
\]
Solving this quadratic inequality for $\E[S]$, we obtain 
\[
\E[S] \leq \frac{C^2 D^2}{ 2\sqrt{n}} \left(1 + \sqrt{1 + \frac{4 r^2 n}{C^2 D^2}} \right).
\]
In particular, for $n$ sufficiently large,
\begin{equation}\label{etape11}
\E[S] \leq C r D.
\end{equation}
Let us now bound $D$ by establishing an upper bound on the entropy $H(\cdot)$. First, remark that the distance defined in \eqref{def:distance_ltilde} verifies for all $g_1,\ g_2\in \mathcal{F}_{s, R, \window, \ell} $,
\begin{align*}
d(g_1, g_2)^2 & \leq C \|\tilde{l}_{g_1} - \tilde{l}_{g_2}\|^2_{L^\infty(\mathcal{T}^d)} 
 \leq C \|g_1 - g_2\|^2_{W^{1,\infty}({\mathcal{T}^d})}.
\end{align*}
Then, since $\mathcal{F} \subset \mathcal{F}_{s, R, \window, \ell}$, we have 
\[
\forall g \in \mathcal{F}, \|\tilde{l}_g\|_{W^{s-2+\ell,\infty}(\mathcal{T}^d)} \leq \frac{C}{\window^{\ell}}.
\]
Therefore, by \cite[Theorem 2]{edmunds}, 
\[
H(u) \leq C (h^{\ell} u)^{-\frac{d}{\ell+s-2}}
\]
and thus plugging this in \eqref{def:D}, and since $2(s-1+\ell) > d$,
\[
D \leq C (h^{\ell} r)^{\frac{-d}{2(s+\ell-2)}} \sum_{j=1}^\infty  2^{j (\frac{d}{2(s+\ell-2)}-1)} \leq C (h^{\ell} r)^{\frac{-d}{2(s+\ell-2)}}.
\]
This upper-bound on $D$ and \eqref{etape11} yields
\begin{equation}
\E[S] \leq C h^{\frac{-d\ell}{2(s+\ell-2)}} r^{\frac{2(s + \ell -2) -d}{2(s+\ell-2)}}.
\end{equation}

\noindent \textbf{Step 3:}
Hence, we can use Theorem 13.19 \cite{Massart} with, following their theorem notations,
\begin{itemize}
    \item $L(g) = \frac{1}{C_0^2}\Var\left[(2  \div (g-\score^\mathcal{F}) + \| g\|^2 - \|\score^\mathcal{F}\|^2) (X_1)\right]$;
    \item $\rho : u \rightarrow u$;
    \item $\psi : r \rightarrow C h^{\frac{-d\ell}{2(s+\ell-2)}} r^{\frac{2(s + \ell -2) -d}{2(s+\ell-2)}}$; 
    \item $\epsilon = C \lambda^{\frac{1}{s-1}}$
\end{itemize} 
to obtain that, with probability $1 - 2e^{-2x}$, 
\begin{align*}
\|\hat{\score} - \score^\mathcal{F}\|_{L^2(\mu)}^2  + \lambda \|\nabla^{s-1} (\hat{\score} - \score^\mathcal{F})\|_{L^2(\mathcal{T}^d)}^2 & \leq \frac{1}{n} \sum_{i=1}^n \tilde{l}_g(X_i) - \tilde{l}_{\score^\mathcal{F}}(X_i)\\
& \leq C \lambda^{\frac{1}{s-1}} \left(L(\hat{\score}) + r_\star^2 + \frac{x}{n}\right),
\end{align*}
where $r_\star$ is the solution of 
\[
    \sqrt{n} r^2 = C \lambda^{-\frac{1}{{s-1}}}  h^{\frac{-d\ell}{2(s+\ell-2)}} r^{\frac{2(s + \ell -2) -d}{2(s+\ell-2)}}
\]
(see Exercise 13.42 \cite{Massart}). This solution can be computed to obtain
\begin{equation}\label{eq:rstar-solve}
r_\star = C \lambda^{-\frac{2(s+\ell-2)}{(s-1)(d+2(s+\ell-2))}} h^{-\frac{d\ell}{d+2(s+\ell-2)}} n^{-\frac{s+\ell-2}{d+2(s+\ell-2)}}.
\end{equation}

Now, we need to bound $L(\hat \score)$. 
    First, we have
    \[
\Var((\| \hat \score \|^2 - \|\score^\mathcal{F} \|^2)(X_1) )   = \Var \langle \hat{\score} -\score^\mathcal{F}, \hat{\score} + \score^\mathcal{F} \rangle (X_1) \leq C \| \hat{\score} -\score^\mathcal{F} \|^2_{L^2(\mu)}.
\]
Now, since the density of $\mu$ is bounded from above, using the Gagliardo-Nirenberg inequality (see Theorem 3.70 \cite{aubin}) yields
\begin{align*}
\lambda^{\frac{1}{s-1}} \|\div (\hat{\score} - \score^\mathcal{F})\|_{L^2(\mu)}^2 & \leq \|f^\mu\|_{L^\infty(\mathcal{T}^d)}^2 \lambda^{\frac{1}{s-1}} \|\div (\hat{\score} - \score^\mathcal{F})\|_{L^2(\mathcal{T}^d)}^2 \\
& \leq \|f^\mu\|_{L^\infty(\mathcal{T}^d)}^2 \lambda^{\frac{1}{s-1}} \|\hat{\score} - \score^\mathcal{F}\|^{2(1-\frac{1}{s-1})}_{L^2(\mathcal{T}^d)}  \|\nabla^{s-1} (\hat{\score} - \score^\mathcal{F})\|^{\frac{2}{s-1}}_{L^2(\mathcal{T}^d)} \\
& \leq \|f^\mu\|_{L^\infty(\mathcal{T}^d)}^2 \lambda^{\frac{1}{s-1}} \|\hat{\score} - \score^\mathcal{F}\|^{\frac{2(s-2)}{s-1}}_{L^2(\mathcal{T}^d)}  \|\nabla^{s-1} (\hat{\score} - \score^\mathcal{F})\|^{\frac{2}{s-1}}_{L^2(\mathcal{T}^d)}.
\end{align*}
Since $\frac{s-2}{s-1} + \frac{1}{s-1} = 1$, we can use Young's inequality for products to obtain 
\begin{multline*}
\lambda^{\frac{1}{s-1}} \|\div (\hat{\score} - \score^\mathcal{F})\|_{L^2(\mu)}^2 \leq \\
\|f^\mu\|_{L^\infty(\mathcal{T}^d)}^2 \left(\frac{s-2}{s-1} \|\hat{\score} - \score^\mathcal{F}\|^{2}_{L^2(\mathcal{T}^d)} + \frac{\lambda}{s-1} \|\nabla^{s-1} (\hat{\score} - \score^\mathcal{F})\|^{2}_{L^2(\mathcal{T}^d)}\right).
\end{multline*}
Finally, since $\mu$ had a density bounded from below on $\mathcal{T}^d$, 
\begin{multline*}
\lambda^{\frac{1}{s-1}} \|\div (\hat{\score} - \score^\mathcal{F})\|_{L^2(\mu)}^2 \\
\leq \|f^\mu\|_{L^\infty(\mathcal{T}^d)}^2  \left(\frac{s-2}{s-1} \|\hat{\score} - \score^\mathcal{F}\|^{2}_{L^2{(\mu)}} \left\|\frac{1}{f^\mu}\right\|_{L^\infty(\mathcal{T}^d)}^2  + \frac{\lambda}{s-1} \|\nabla^{s-1} (\hat{\score} - \score^\mathcal{F})\|^{2}_{L^2(\mathcal{T}^d)}\right).
\end{multline*}
Thus we obtain that, with probability greater that $1-2e^{-2x}$,
\begin{multline*}
\|\hat{\score} - \score^\mathcal{F}\|_{L^2(\mu)}^2  + \lambda \|\nabla^{s-1} (\hat{\score} - \score^\mathcal{F})\|_{L^2(\mathcal{T}^d)}^2 \leq  \\
C \left( \| \hat{\score} -\score^\mathcal{F}\|^2_{L^2(\mu)} + \lambda\|\nabla^{s-1} (\hat{\score} - \score^\mathcal{F})\|^{2}_{L^2(\mathcal{T}^d)}+ \lambda^{\frac{1}{s-1}}(r_\star^2 + \frac{x}{n}) \right).
\end{multline*}
As the constant $C$ above can be made smaller than $\frac{1}{2}$, this entails that
\begin{eqnarray*}
\|\hat{\score} - \score^\mathcal{F}\|_{L^2(\mu)}^2 \leq C\lambda^{\frac{1}{s-1}}(r_\star^2 + \frac{x}{n}) 
\end{eqnarray*}
which after integration with respect to $x$ gives 
\[
\E[\|\hat{\score} - \score^\mathcal{F}\|_{L^2(\mu)}^2] \leq C \lambda^{\frac{1}{s-1}} (r_\star^2 + \frac{1}{n}).
\]
Finally, plugging \eqref{eq:rstar-solve} in the above inequality yields the result.
\end{proof}


\section{Score estimation with diffusion}\label{sec:avecdiffusion}
\subsection{Setting}

Let us now deal with the problem of estimating the score function for diffusion models. For $t  >0$, let $\mu$ be a probability measure on $\R^D$ and let $\mu_t$ be the distribution of the random variable $\overrightarrow{X}_t $ defined in \eqref{eq:OUconvo}. This measure admits a density $f^{\mu_t}$ for any $t>0$, see \eqref{eq:densite_h-1}, and we recall the score function we want to estimate is $\score^\star_t = \nabla \log f^{\mu_t}$. For our estimation, we consider assumptions similar to the ones used in \cite{Divol2022} for measure estimation on a manifold. 

\begin{hyp}
\label{hyp:diffusion}
The probability measure $\mu$ is a measure supported on a $d$-dimensional sub-manifold $\mathcal{M}$ of $\mathbb{R}^D$ such that 
\begin{itemize}
\item $\mathcal{M}$ is compact and without boundary;
\item the reach of $\mathcal{M}$ is strictly larger than $\tau_{\min}$;
\item there exists $k > 0$ such that $\mathcal{M}$ is $k$-smooth: there exists $L > 0$ such that for any $x \in \mathcal{M}$, the orthogonal projection on the tangent space $\tangentspace$ of $\mathcal{M}$ at $x$, $\projection_x : \mathcal{M} \rightarrow \tangentspace$, is a local diffeomorphism in $x$ whose inverse $\Psi_x$ defined on $B_{\tangentspace}(x, r)$, with $r = \frac{\min(\tau_{\min}, L)}{4}$, is $k$-times differentiable with bounded derivatives.
\end{itemize}
Furthermore, $\mu$ admits a density $\density^\mu$ with respect to the volume measure of $\mathcal{M}$ such that 
\begin{itemize}
\item $\density^\mu$ is bounded from below by $\density_{\min} > 0$;
\item there exists $2 \leq  s \leq k-2$ such that $\density^\mu$ is $s$-times differentiable on $\mathcal{M}$ and its derivatives are bounded.
\end{itemize}
\end{hyp}

\subsection{Score function estimator}
\label{sec:estimatordef}
Let $t > 0$. It is well known (see e.g. \cite{Vincent2011}) the score-function $\score^\star_t$ can be defined as 
\begin{equation}
\label{eq:conditionalscoredef}
\score^\star_t = -\E\left[\frac{Z_t}{\sqrt{1-e^{-2t}}} \mid \overrightarrow{X}_t \right]
\end{equation}
and, as in Section~\ref{sec:nondiffestimator}, it minimizes the following implicit score matching loss function, defined for any vector field $g \in W^{1,2}(\mu_t)$ as
\begin{align*}
L_t(g) &  =  \int_{\R^D} 2 \div g + \|g\|^2 \, d\mu_t 
 =  \E\Big[2 \div g (\overrightarrow{X}_t)+ \|g  (\overrightarrow{X}_t)\|^2 \Big]\\
& =  \E\Big[ \E\left[2 \div g (\overrightarrow{X}_t)+ \|g  (\overrightarrow{X}_t)\|^2 \mid X\right]\Big] = \E\left[ P_t(2 \div g (\overrightarrow{X}_t)+ \|g  (\overrightarrow{X}_t)\|^2)(X)\right]\\
& = \int_{\mathcal{M}} P_t(2 \div g + \|g\|^2) \, d\mu ,
\end{align*}
where $(P_t)_{t \geq 0}$ denotes the Ornstein-Uhlenbeck semigroup:
\[
\forall \phi \in L^{1}(\mu_t), \forall x \in \R^D, P_t \phi(x) = \E[\phi(\overrightarrow{X}_t) \mid X = x].
\]
Therefore, by taking
\[
l_{g,t} = P_t(2\div g + \|g\|^2),
\]
we can rewrite the loss as 
\[
L_t(g) = \int_{\mathcal{M}} l_{g,t} \, d\mu = \E[l_{g,t}(X)]. 
\]

Given a sample $X_1, \dots, X_n$ of i.i.d. random variables drawn form $\mu$, we can define an empirical version of our loss
\[
\widehat{L}_t(g) = \frac{1}{n} \sum_{i=1}^n l_{g,t}(X_i)
\]
We consider a minimizer over a hypothesis class $\mathcal{F} \subset W^{1,\infty}(\R^D)$ 
\[
\score_t^\mathcal{F} \in \argmin_{g\in \mathcal{F}} L_t(g),
\]
and assume our score estimator satisfies 
\begin{equation}\label{eq:hat-score}
\hat{\score}_t \in \argmin_{g\in \mathcal{F}} \widehat{L}_t(g).
\end{equation}
Remark that, compared to the non-diffusion case, we do not need to add a penalization on the higher derivative of the function considered thanks to the smoothing properties of the semigroup $(P_t)_{t \geq 0}$ (see Lemma \ref{lem:integrationByParts}). Also note that, in practice, score functions are estimated through minimizing the equivalent denoising score-matching loss \cite{Vincent2011}. 

As in the non-diffusion case, we define a ``good" sized hypothesis class by letting $R \geq 0, h \geq 0, \ell \in \mathbb{N}$ and 
\begin{multline*}
\mathcal{F}_{t, s, R, h, \ell}=\Big\{g\in W^{s+1+\ell, \infty}(\R^D) \ \big|\  \left\|g\right\|_{W^{s+1,\infty}(\R^D)} \leq \frac{R}{1-e^{-2t}} ,\ \\
\forall i \in \{0,\dots,\ell\}, \left\|g\right\|_{W^{s+1+i,\infty}(\R^D)} \leq \frac{R}{(1-e^{-2t})\window^{i}}\Big\}.
\end{multline*}
Such a class of vector fields is built by remarking that, for a sufficiently large $R > 0$,
\[
\max_{0 \leq i \leq s+1}  \left\| \nabla^i \score^\star_t(x)\right\| \leq \frac{R}{1-e^{-2t}}
\]
for all $x \in \R^D$ sufficiently close to $\mathcal{M}$. Thus $\mathcal{F}_{t, s, R, h, \ell}$ contains a kernel-smoothed approximation with bandwidth $\window$ of a clipped version of $\score^\star_t$, see Section~\ref{sec:biasdiffusion} for more details. As mentioned in the introduction, our hypothesis class of vector fields belongs to a Sobolev ball rather than corresponding to a neural network architecture. However, another important difference with works such as \cite{oko, Stephanovitch, zhang2026, fu2026} is that we do not assume that candidate vector fields $g$ verify $\|g\|_{L^\infty(\R^D)} \leq \frac{R}{\sqrt{1-e^{-2t}}}$ which would greatly simplify the proofs. Such a condition could be obtained through clipping around the manifold $\mathcal{M}$ however such an aggressive clipping would make the higher order derivatives explode faster than the current $\frac{1}{t}$. Remarking that the condition we would truly require in our proof is \[\|P_t (\|g\|^2)\|_{L^\infty(\mathcal{M})} \leq \frac{R^2}{1-e^{-2t}},\] which is verified by $\score_t^\star$, one may want to add this condition on the class $\mathcal{F}_{t, s, R, h, \ell}$ but this would require knowing, or at least directly estimating, the manifold $\mathcal{M}$. We are now ready to state our estimation result. 
\begin{theorem}
\label{thm:maindiffusion}
Suppose Assumption \ref{hyp:diffusion} is verified. If $\mathcal{F} = \mathcal{F}_{t, s, R, \window, \ell}$ with $R$ sufficiently large and $\ell$ such that $2(s+1+\ell) > d$ then there exists $C > 0$ independent of $n, t$ and $\window$ such that, if $1-e^{-2t} \leq \frac{C}{\log(n)}$,  
\[
\E\left[\|\hat{\score}_t - \score_t^\star \|^2_{L^2(\mu_t)}\right] \leq \\
\frac{C}{(1-e^{-2t})^2} \left( \window^{2(s+1)} + \left(r_\star^2  + \frac{1}{ n} \right) \right),
\]
where we can choose either:
\begin{multline}
\label{eq:r1}
r_\star^2 = \max\bigg( \left(n^{2(s+2+\ell)} (1-e^{-2t})^{d-2(s+2+\ell)} \window^{2\ell d}\right)^{-\frac{1}{2(s+2+\ell)+d}}, \\
(n^{2(s+1+\ell)} \window^{2 \ell d})^{-\frac{2(s+1)+d}{d(2(s+1+\ell) + 4(s+1)+d)}}\bigg).
\end{multline}
or, for any $m \in \mathbb{N}$ such that $2(s+1+m) > d$, 
\begin{multline}
\label{eq:r2}
r_\star^2 = \max\bigg( \left(n^{2(s+2+m)} (1-e^{-2t})^{d(1+m) - 2(s+2+m)} \right)^{-\frac{1}{2(s+2+m)+d}} , \\
(n^{2(s+1+m)} (1-e^{-2t})^{dm})^{-\frac{2(s+1)+d}{d(2(s+1+m) + 4(s+1)+d)}}\bigg).
\end{multline}The choice of the best $r_\star^2$ depends on the value of $t$ and $h$ as commented in the remark below.
\end{theorem}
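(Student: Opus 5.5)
We follow the architecture of the proof of Theorem~\ref{rate-score-tore-plat}. First we split
\[
\E\|\hat{\score}_t - \score_t^\star\|^2_{L^2(\mu_t)} \leq 2\|\score_t^\mathcal{F} - \score_t^\star\|^2_{L^2(\mu_t)} + 2\E\|\hat{\score}_t - \score_t^\mathcal{F}\|^2_{L^2(\mu_t)}.
\]
We then treat the bias and variance terms in two separate propositions.

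\textbf{Bias term.} The identity $L_t(g)-L_t(\score^\star_t)=\|g-\score^\star_t\|^2_{L^2(\mu_t)}$ follows exactly as in \eqref{eq:diffL}, by integration by parts on $\R^D$. Since $\score_t^\mathcal{F}$ minimizes $L_t$ over $\mathcal{F}$, it suffices to exhibit one element of $\mathcal{F}_{t,s,R,h,\ell}$ whose $L^2(\mu_t)$ distance to $\score^\star_t$ is at most $C h^{s+1}/(1-e^{-2t})$.

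\begin{enumerate}
\item We first prove the derivative estimate
\[
\|\nabla^i\score^\star_t(x)\|\le R/(1-e^{-2t}), \qquad i\le s+1,
\]
for $x$ in a tubular neighbourhood of $\mathcal{M}$ of width of order $\sqrt{(1-e^{-2t})\log n}$. To do so we write $\score^\star_t$ through \eqref{eq:conditionalscoredef} as a posterior mean over $\mathcal{M}$. We then use the local charts $\Psi_x$ from Assumption~\ref{hyp:diffusion} and a Laplace-type expansion of the Gaussian kernel restricted to $\mathcal{M}$; the smoothness $s\le k-2$ of $\density^\mu$ and of $\mathcal{M}$ controls the derivatives.
\item Next we clip $\score^\star_t$ outside this tube by a smooth cutoff, where the condition $1-e^{-2t}\le C/\log n$ is used. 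The Gaussian tails of $\mu_t$ away from $\mathcal{M}$ make the clipping error $O(1/n)$ after rescaling.
\item Finally we convolve the clipped field with a kernel of order $s+1$ and bandwidth $h$, as in Proposition~\ref{prop:biais}. This gives membership in $\mathcal{F}_{t,s,R,h,\ell}$ and a pointwise error of $h^{s+1}R/(1-e^{-2t})$.
\end{enumerate}

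\textbf{Variance term.} We repeat Steps~1--3 of Proposition~\ref{prop:variance} with $l_{g,t}=P_t(2\div g+\|g\|^2)$. No penalty is needed because the semigroup smooths. The two ingredients are as follows.

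\begin{enumerate}
\item \emph{Localization in $L^2(\mu_t)$.} We must bound $\Var(l_{g,t}-l_{\score^\mathcal{F}_t,t})(X)$ by $C(1-e^{-2t})^{-1}\|g-\score^\mathcal{F}_t\|^2_{L^2(\mu_t)}$. We would use the Gaussian integration by parts of Lemma~\ref{lem:integrationByParts}, namely
\[
P_t\div g = (1-e^{-2t})^{-1/2}\E[\langle Z_t, g(\overrightarrow X_t)\rangle\mid X],
\]
together with Jensen's inequality. This replaces the Gagliardo--Nirenberg step of the torus proof.
\item \emph{Entropy bound.} We bound the entropy of $\{l_{g,t}\}$ restricted to $\mathcal{M}$ in $L^\infty(\mathcal{M})$. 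There are two routes, which produce the two admissible choices of $r_\star^2$.
\begin{enumerate}
\item Using the class constraints directly, the functions $l_{g,t}|_{\mathcal{M}}$ have $s+\ell+1$ bounded derivatives of size $h^{-\ell}(1-e^{-2t})^{-1}$. The bound of \cite{edmunds} on a $d$-dimensional manifold then gives $H(u)\lesssim (h^\ell (1-e^{-2t})u)^{-d/(s+1+\ell)}$. This leads to \eqref{eq:r1}.
\item Alternatively, we discard $h$ and use that $P_t$ gains $m$ derivatives at cost $(1-e^{-2t})^{-m/2}$ each. This leads to \eqref{eq:r2}.
\end{enumerate}
\end{enumerate}

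Plugging either entropy bound into Dudley's sum $D$ and then into Theorem~13.19 of \cite{Massart}, the fixed-point equation $\sqrt n r^2=\psi(r)$ has two regimes, according to whether the $L^2$ or the $L^\infty$ envelope dominates. This produces the maximum of two terms in each formula. The prefactor $(1-e^{-2t})^{-2}$ comes from the envelope constant $C_0\sim(1-e^{-2t})^{-1}$.

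\textbf{Main obstacle.} We expect the main obstacle to be the bias step. It requires uniform bounds of order $1/(1-e^{-2t})$ on up to $s+1$ derivatives of the score near a curved manifold. It also requires controlling $\|g\|^2$ far from $\mathcal{M}$, where no $L^\infty$ bound on $g$ is imposed. We would first try to handle both through the Gaussian decay of $\mu_t$ at distance $\gg\sqrt{(1-e^{-2t})\log n}$.
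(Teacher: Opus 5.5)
Your overall plan matches the paper's. For the bias, it uses derivative bounds near $\mathcal{M}_t$, a cutoff and kernel smoothing. For the variance, it uses chaining, \cite[Theorem 13.19]{Massart}, and \cite[Theorem 2]{edmunds} with either an $h$-driven or a $P_t$-driven regularity bound. However, the localization step of your variance argument fails.

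Gaussian integration by parts does control the divergence part: $\E_\mu|P_t\div(g-\score^\mathcal{F}_t)|^2\le D(1-e^{-2t})^{-1}\|g-\score^\mathcal{F}_t\|^2_{L^2(\mu_t)}$. For the quadratic part $P_t\langle g-\score^\mathcal{F}_t,g+\score^\mathcal{F}_t\rangle$, the same argument would need $\sup_{\mathcal{M}}P_t\|g+\score^\mathcal{F}_t\|^2\lesssim(1-e^{-2t})^{-1}$. The class $\mathcal{F}_{t,s,R,h,\ell}$ only gives $\|g\|_{L^\infty}\le R/(1-e^{-2t})$; this is exactly the difficulty the paper flags just before the theorem. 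The loss is genuine and occurs on $\mathcal{M}_t$ itself, not far from it, so Gaussian tails of $\mu_t$ cannot rescue it. For instance, two elements of the class may differ by a bump $\phi$ of height $a\asymp(1-e^{-2t})^{-1}$ on a ball of fixed radius centred on $\mathcal{M}_t$. Then the variance of $P_t\|\phi\|^2(X)$ is of order $a^4$, whereas $(1-e^{-2t})^{-1}\|\phi\|^2_{L^2(\mu_t)}$ is only of order $a^2(1-e^{-2t})^{-1}$.

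The paper handles this as follows:
\begin{itemize}
\item It writes $g+\score^\mathcal{F}_t=(g-\score^\mathcal{F}_t)+2(\score^\mathcal{F}_t-\score^\star_t)+2\score^\star_t$.
\item It bounds the last piece using $P_t\|\score^\star_t\|^2\lesssim(1-e^{-2t})^{-1}$ on $\mathcal{M}$.
\item It treats the genuinely quadratic pieces via $P_t\|u\|^2=\|P_tu\|^2+2\int_0^tP_{t-s}\|\nabla P_su\|^2\,ds$ and a Gagliardo--Nirenberg bound of $\|P_tu\|^4_{L^4(\mu)}$ by $\|\nabla^{s+1}P_tu\|_{L^\infty}^{2d/(2(s+1)+d)}\|P_tu\|_{L^2(\mu)}^{4-2d/(2(s+1)+d)}$.
\end{itemize}
This yields the nonlinear relation $\sigma^2\le CL\max(1-e^{-2t},L^{2(s+1)/(2(s+1)+d)})$, where $L=(1-e^{-2t})^2(\|g-\score^\mathcal{F}_t\|^2_{L^2(\mu_t)}+\|g-\score^\star_t\|^2_{L^2(\mu_t)})$. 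The resulting modulus $u\mapsto Cu\max(\sqrt{1-e^{-2t}},u^{2(s+1)/(2(s+1)+d)})$ in Theorem 13.19 splits the fixed-point equation into two regimes. That is what produces the maximum in \eqref{eq:r1}--\eqref{eq:r2}, the second term being the support-estimation part of Remark~\ref{rem:supportvsmeasure}. With your linear localization this second term could not appear, so your ``$L^2$ versus $L^\infty$ envelope'' account of the maximum is not the actual mechanism. Also, the envelope of $l_{g,t}-l_{\score^\mathcal{F}_t,t}$ is of order $R^2(1-e^{-2t})^{-2}$ because of the $\|g\|^2$ term, not $(1-e^{-2t})^{-1}$. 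That is the source of the prefactor $(1-e^{-2t})^{-2}$.

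There are two smaller problems.

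\emph{Cutoff width in the bias step.} Cutting $\score^\star_t$ off on a tube of width $w=\sqrt{(1-e^{-2t})\log n}$ around $\mathcal{M}_t$ violates the class constraint. There $\|\score^\star_t\|\asymp w/(1-e^{-2t})$ and the cutoff's $(s+1)$-th derivative is of order $w^{-(s+1)}$. So $\nabla^{s+1}$ of the product is of order $w^{-s}(1-e^{-2t})^{-1}$, which is unbounded relative to $R(1-e^{-2t})^{-1}$ as $t\to 0$. You need three things, in order:
\begin{itemize}
\item derivative bounds on a tube of fixed width (the paper uses $\mathcal{M}_t^{\tau_{\min}/2}$, via $f^{\mu_t}=f^\perp_tf^\|_t$ and integrations by parts in charts, Lemma~\ref{lem:parallel-regularity});
\item a cutoff with $O(1)$ derivatives;
\item only then the condition $1-e^{-2t}\le C/\log n$, which turns the tail $e^{-c/(1-e^{-2t})}$ into $O(1/n)$.
\end{itemize}

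\emph{Derivative count in entropy route (a).} $\div g$ has only $s+\ell$ bounded derivatives, so route (a) cannot reach the order $s+2+\ell$ appearing in the first term of \eqref{eq:r1}. The paper gains two derivatives from $P_t$ through Lemma~\ref{lem:integrationByParts}. It also uses $\|\sqrt{P_t\|g\|^2}\|_{L^2(\mu)}=\|g\|_{L^2(\mu_t)}$, which is where the factor $1+r/(1-e^{-2t})$ in \eqref{eq:bound1} comes from.
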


\begin{remark}
The result exhibits different regimes corresponding to Equations \eqref{eq:r1} and \eqref{eq:r2} depending on the possible values for $r^2_\star$. More precisely, a key factor is the regularity of $l_{g,t}$. For small times, this regularity is governed by the regularity of $g$ depending on the window $\window$ and the constant $\ell$, giving Equation \eqref{eq:r1}. For large enough times, when $\sqrt{1-e^{-2t}} \geq \window$, the smoothing properties of $(P_t)_{t \geq 0}$ provide a tighter control on the regularity of $l_{g,t}$ compared to the ``kernel-like" smoothing from the definition of $\mathcal{F}_{t, s, R, \window, \ell}$. We then obtain Equation \eqref{eq:r2} for $r_\star^2$ which does not depend on $\window$ and $\ell$ anymore but on $\sqrt{1-e^{-2t}}$ and any $m \in \mathbb{N}$. As we will see in Section~\ref{sec:consequences}, we then want to use a large value of $m$ in that case. 
\end{remark}

\begin{remark}
\label{rem:supportvsmeasure}
In both Equations \eqref{eq:r1} and \eqref{eq:r2}, the value of $r_\star^2$ is given as a maximum between two values. These two values correspond to two different tasks we need to achieve: estimating the support of $\mu$ (corresponding to the second value in the maximum) and estimating the measure on the support (corresponding to the first value).
\end{remark}

This result is obtained in a similar fashion to Theorem~\ref{rate-score-tore-plat}. However, due to the manifold assumption and the need to exploit the smoothing properties of the semigroup $(P_t)_{t \geq 0}$, they are more technical. Therefore, we will give the outline of the proof below while delaying the technical parts to Section~\ref{sec:proofs}. The first step to prove the theorem is to derive a bias-variance trade-off result for our estimator which is provided by the following lemma. 
\begin{lemma}
\label{lem:bias-variance}
For any $t > 0$,
\[
\|\hat{\score}_t - \score^\star_t\|^2_{L^2(\mu_t)} \leq \|\score^\mathcal{F}_t - \score^\star_t\|^2_{L^2(\mu_t)} + (\widehat{L}_t(\score^\mathcal{F}_t) - \widehat{L}_t(\hat{\score}_t)) -  (L_t(\score^\mathcal{F}_t) - L_t(\hat{\score}_t)) . 
\]
\end{lemma}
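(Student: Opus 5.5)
The plan is to reproduce, for the diffusion loss $L_t$, the identity \eqref{eq:diffL} established in the torus setting, and then use the two optimality properties of $\hat{\score}_t$ and $\score^\mathcal{F}_t$.

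\textbf{Step 1: excess-risk identity for $L_t$.} For any $g \in W^{1,2}(\mu_t)$ (in particular any $g \in \mathcal{F} \subset W^{1,\infty}(\R^D)$) we claim
\[
L_t(g) - L_t(\score^\star_t) = \|g - \score^\star_t\|^2_{L^2(\mu_t)}.
\]
We argue as in the proof of \eqref{eq:diffL}, using the definition $L_t(g) = \int_{\R^D} (2\div g + \|g\|^2)\, d\mu_t$ and integrating by parts on $\R^D$ against the smooth positive density $f^{\mu_t}$:
\[
\int \left<g, \score^\star_t\right> d\mu_t = \int \left<g, \nabla f^{\mu_t}\right> dx = -\int (\div g)\, f^{\mu_t}\, dx .
\]
Expanding $\|g-\score^\star_t\|^2$ then gives the identity. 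The only point needing care, and the main (mild) obstacle, is the justification of the boundary terms at infinity. Here $g$ is bounded with bounded derivatives, and $f^{\mu_t}$ and $\nabla f^{\mu_t}$ decay like Gaussians because $\mu$ is compactly supported (formula \eqref{eq:densite_h-1}). So we integrate over large balls and let the radius tend to infinity. The same argument applied to $\score^\star_t$ itself needs $\score^\star_t \in W^{1,2}(\mu_t)$. This follows from the representation \eqref{eq:conditionalscoredef}, which shows that $\score^\star_t$ grows at most linearly. Alternatively, we avoid it entirely by only ever using differences $L_t(g_1)-L_t(g_2)$ with $g_1, g_2 \in \mathcal{F}$, through the polarized form
\[
L_t(g_1) - L_t(g_2) = \|g_1-\score^\star_t\|^2_{L^2(\mu_t)} - \|g_2-\score^\star_t\|^2_{L^2(\mu_t)} .
\]

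\textbf{Step 2: algebra.} Apply Step 1 with $g_1=\hat{\score}_t$ and $g_2=\score^\mathcal{F}_t$:
\[
\|\hat{\score}_t - \score^\star_t\|^2_{L^2(\mu_t)} = \|\score^\mathcal{F}_t - \score^\star_t\|^2_{L^2(\mu_t)} + L_t(\hat{\score}_t) - L_t(\score^\mathcal{F}_t).
\]
By the definition \eqref{eq:hat-score} of $\hat{\score}_t$ as a minimizer of $\widehat{L}_t$ over $\mathcal{F}$, and since $\score^\mathcal{F}_t\in\mathcal{F}$, we have
\[
\widehat{L}_t(\score^\mathcal{F}_t) - \widehat{L}_t(\hat{\score}_t) \ge 0 .
\]
Adding this nonnegative quantity to the right-hand side yields exactly the stated inequality, with the term $-(L_t(\score^\mathcal{F}_t) - L_t(\hat{\score}_t))$.

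Note that, unlike in the torus case, we do not need the minimality of $\score^\mathcal{F}_t$ or a Gateaux derivative argument here, since the bias term is kept as is. The statement holds pointwise in the sample, for every realization.
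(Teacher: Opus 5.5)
Your proposal is correct and follows the paper's proof: the paper likewise applies the excess-risk identity $L_t(g)-L_t(\score^\star_t)=\|g-\score^\star_t\|^2_{L^2(\mu_t)}$ (citing Vincent) to $g=\hat{\score}_t$ and $g=\score^\mathcal{F}_t$, and then adds the nonnegative quantity $\widehat{L}_t(\score^\mathcal{F}_t)-\widehat{L}_t(\hat{\score}_t)$. Your extra care with the integration by parts on $\R^D$, and your polarized form (which only needs $\int\|\score^\star_t\|^2\,d\mu_t<\infty$), supply a justification that the paper omits.
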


As usual, we will face a bias-variance trade-off: the larger $\mathcal{F}$ is, the smaller the bias term $\|\score^\mathcal{F}_t - \score_t\|^2_{L^2(\mu_t)}$ but also the larger the variance term $ (\widehat{L}_t(\score^\mathcal{F}_t) - \widehat{L}_t(\hat{\score}_t)) -  (L_t(\score^\mathcal{F}_t) - L_t(\hat{\score}_t))$. Theorem~\ref{thm:maindiffusion} is then obtained by combining a bound on the variance from Proposition~\ref{pro:final-variance} and a bound on the bias from Proposition~\ref{pro:final-bias}.

\begin{remark}
\label{rem:large-times}
The condition $1-e^{-2t} \leq \frac{C}{\log(n)}$ in Theorem~\ref{thm:maindiffusion} is required because we only study the regularity of the score function around the manifold $\mathcal{M}$ where most of the measure $\mu_t$ is distributed when $t$ is small. A similar result thus holds with $s = -1$ for any $t$, using Lemma~\ref{lem:simple-bias} to replace Proposition~\ref{pro:final-bias} to deal with the bias term and take $\mathcal{F} = \mathcal{F}_R = \{g \mid \|g + \frac{I_D}{1-e^{-2t}}\|_{W^{1, \infty}(\R^D)} \leq \frac{R}{1-e^{-2t}} \}$.
\end{remark}

\subsection{Bias term} 
\label{sec:biasdiffusion}
In order to deal with the bias term, our first objective is to study the regularity of the score function. We first start by proving a crude result which will be relevant for ``large" times. 
\begin{lemma}
\label{lem:simple-bias}
For any $t > 0$ and any $x \in \R^D$, 
\[
\left\|\score_t^\star(x) + \frac{x}{1-e^{-2t}} \right\| \leq \frac{e^{-t} \sup_{y \in \mathcal{M}} \|y\|}{1-e^{-2t}}.
\]
\end{lemma}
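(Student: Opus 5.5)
The plan is to differentiate the explicit density formula \eqref{eq:densite_h-1} directly and write the score as a posterior expectation. Set $\sigma_t^2 = 1-e^{-2t}$. Differentiating under the integral sign is legitimate because $\mathcal{M}$ is compact and the Gaussian kernel is smooth with derivatives bounded uniformly in $x_0\in\mathcal{M}$ for each fixed $x$. This gives
\[
\nabla f^{\mu_t}(x) = \big(2\pi \sigma_t^2\big)^{-D/2} \int_{\cM} -\frac{x - e^{-t}x_0}{\sigma_t^2}\, e^{-\frac{\|x-x_0 e^{-t}\|^2}{2\sigma_t^2}}\, \mu(dx_0).
\]
Since $f^{\mu_t}(x)>0$ for every $x\in\R^D$, we may divide by it and obtain
\[
\score^\star_t(x) = -\frac{x}{\sigma_t^2} + \frac{e^{-t}}{\sigma_t^2} \int_{\cM} x_0 \, \pi_x(dx_0),
\]
where $\pi_x(dx_0)$ is the probability measure on $\mathcal{M}$ with density proportional to $e^{-\|x-x_0e^{-t}\|^2/(2\sigma_t^2)}$ with respect to $\mu$. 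In other words, $\pi_x$ is the conditional law of $X$ given $\overrightarrow{X}_t = x$. This representation is consistent with \eqref{eq:conditionalscoredef}, since $Z_t = (\overrightarrow{X}_t - e^{-t}X)/\sigma_t$.

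With this identity the bound follows at once:
\[
\left\|\score_t^\star(x) + \frac{x}{\sigma_t^2}\right\| = \frac{e^{-t}}{\sigma_t^2}\left\|\int_{\cM} x_0\,\pi_x(dx_0)\right\| \le \frac{e^{-t}}{\sigma_t^2}\int_{\cM}\|x_0\|\,\pi_x(dx_0) \le \frac{e^{-t}\sup_{y\in\mathcal{M}}\|y\|}{\sigma_t^2}.
\]
The first inequality is Jensen's inequality for the norm, and the second holds because $\pi_x$ is a probability measure supported on $\mathcal{M}$.

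The only point requiring care is justifying the interchange of derivative and integral and the positivity of $f^{\mu_t}$. Both follow from the compactness of $\mathcal{M}$ (so that $\sup_{y\in\mathcal{M}}\|y\|<\infty$) and the strict positivity of the Gaussian kernel. I therefore expect no real obstacle; the whole proof is the posterior-mean representation of the score.
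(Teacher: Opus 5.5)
Your proof is correct and is essentially the paper's argument: both write $\score^\star_t(x) + x/(1-e^{-2t})$ as $e^{-t}\,\E[X \mid \overrightarrow{X}_t = x]/(1-e^{-2t})$ and bound this by $e^{-t}\sup_{y\in\mathcal{M}}\|y\|/(1-e^{-2t})$. The only difference is that you derive this posterior-mean identity by differentiating \eqref{eq:densite_h-1} directly, whereas the paper invokes \eqref{eq:conditionalscoredef}; your route has the small merit of giving the bound for every $x$ rather than only $\mu_t$-almost everywhere.
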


\begin{proof}
By Equation~(\ref{eq:conditionalscoredef}),  
\begin{align*}
\score_t^\star + \frac{\overrightarrow{X}_t}{1-e^{-2t}} & = \E\left[- \frac{Z_t}{\sqrt{1-e^{-2t}}} + \frac{e^{-t}X + \sqrt{1-e^{-2t}}Z}{1-e^{-2t}} \mid \overrightarrow{X}_t \right]  = \E\left[\frac{e^{-t} X}{1-e^{-2t}} \mid \overrightarrow{X}_t \right],
\end{align*}
concluding the proof.
\end{proof}

For ``small times", as we are only interested in estimating the score function in the $L^2(\mu_t)$ sense and since most of the mass of $\mu_t$ is distributed near the manifold $\mathcal{M}_t = \{e^{-t} x \mid x \in \mathcal{M}\}$ it is sufficient to study the regularity of the score function around it. For $\epsilon > 0$, let $\mathcal{M}_t^{\epsilon} = \{x \in \R^D \mid \exists y \in \mathcal{M}_t, \|y-x\| < e^{-t} \epsilon\}$. As the score function $\score^\star_t$ is the gradient of the logarithm of the density $f^{\mu_t}$ of $\overrightarrow{X}_t$, we can study the regularity of $f^{\mu_t}$ to understand the regularity of $\score^\star_t$. Let us start by decomposing the density $f^{\mu_t}$. For this we introduce the measure $\nu_t$ of $e^{-t}X$ and $p_t : \R^D \rightarrow \mathcal{M}_t$, the orthogonal projection on $\mathcal{M}_t$.
\begin{lemma}
\label{lem:density-decomposition}
Let $x \in \mathcal{M}_t^{\tau_{\min}}$ and $t < 1$. We have
\[
f^{\mu_t}(x) = f^\perp_t(x) f^\|_t(x),
\]
where
\[
f^\perp_t(x) = \frac{1}{\left(2\pi (1-e^{-2t})\right)^{(D-d)/2}}    e^{-\frac{\|x-p_t(x)\|^2}{2(1-e^{-2t})}}
\]
and
\[
f^\|_t(x) = \frac{1}{\left(2\pi (1-e^{-2t})\right)^{d/2}} \int_{\mathcal{M}_t} e^{-\frac{\|p_t(x)-y\|^2 + 2 \left<x - p_t(x) ,p_t(x) - y \right>}{2(1-e^{-2t})}} d\nu_t(y)
\]
\end{lemma}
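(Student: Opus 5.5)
This is a direct algebraic rewriting of the Gaussian kernel in \eqref{eq:densite_h-1}, once we push $\mu$ forward to $\nu_t$ (the law of $e^{-t}X$, supported on $\mathcal{M}_t$). By the change of variables $y = e^{-t}x_0$, the density becomes
\[
f^{\mu_t}(x) = \big(2\pi(1-e^{-2t})\big)^{-D/2} \int_{\mathcal{M}_t} e^{-\frac{\|x-y\|^2}{2(1-e^{-2t})}}\, d\nu_t(y).
\]

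The first step is to make sure $p_t(x)$ is well defined. For $x \in \mathcal{M}_t^{\tau_{\min}}$, the distance from $x$ to $\mathcal{M}_t$ is less than $e^{-t}\tau_{\min}$. The manifold $\mathcal{M}_t = e^{-t}\mathcal{M}$ has reach strictly larger than $e^{-t}\tau_{\min}$, because reach scales linearly under dilation. Hence the orthogonal projection $p_t(x)$ onto $\mathcal{M}_t$ exists and is unique. The assumption $t<1$ does not seem needed for the identity itself; it presumably matters later, so we only keep track of it.

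Next, for each $y \in \mathcal{M}_t$ we expand
\[
\|x-y\|^2 = \|x-p_t(x)\|^2 + \|p_t(x)-y\|^2 + 2\langle x-p_t(x), p_t(x)-y\rangle .
\]
The first term does not depend on $y$, so its exponential factors out of the integral and gives $e^{-\|x-p_t(x)\|^2/(2(1-e^{-2t}))}$. We then split the normalizing constant as
\[
(2\pi(1-e^{-2t}))^{-D/2} = (2\pi(1-e^{-2t}))^{-(D-d)/2}\,(2\pi(1-e^{-2t}))^{-d/2}.
\]
The factor with exponent $-(D-d)/2$ goes with the normal Gaussian to form $f^\perp_t(x)$. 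The factor with exponent $-d/2$ stays with the remaining integral, which is exactly $f^\|_t(x)$.

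There is no real obstacle here. The only point needing care is the well-definedness of $p_t$ on $\mathcal{M}_t^{\tau_{\min}}$ through the scaled reach. The cross term is not assumed to vanish; it is kept inside $f^\|_t$, as in the statement.
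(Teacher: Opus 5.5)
Your proposal is correct and follows the paper's own proof: rewrite \eqref{eq:densite_h-1} as an integral against $\nu_t$ over $\mathcal{M}_t$, expand $\|x-y\|^2$ around $p_t(x)$, pull the $y$-independent factor out of the integral, and split the normalizing constant. You also note that $p_t$ is well defined on $\mathcal{M}_t^{\tau_{\min}}$ because the reach of $\mathcal{M}_t=e^{-t}\mathcal{M}$ exceeds $e^{-t}\tau_{\min}$. The paper leaves this check implicit, and it is correct.
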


\begin{proof}
From \eqref{eq:densite_h-1},
\[
f^{\mu_t}(x) = \frac{1}{\left(2\pi (1-e^{-2t})\right)^{D/2}} \int_{\mathcal{M}_t} e^{-\frac{\|x -y\|^2}{2(1-e^{-2t})}} d\nu_t(y),
\]
the result directly follows from 
\[
\|x-x_0 e^{-t}\|^2= \|x-p_t(x)\|^2 + \|p_t(x)-x_0 e^{-t}\|^2 + 2\left< x-p_t(x),\ p_t(x)-x_0 e^{-t} \right>.
\]
\end{proof}

Let us consider $\epsilon=\tau_{\min}$, the reach introduced in Assumption \ref{hyp:diffusion}, and let $x \in \mathcal{M}_t^{\tau_{\min}}$. By Lemma \ref{lem:density-decomposition}, the score function satisfies
\begin{equation}
\label{eq:scoreshape}
    \score^\star_t(x) = - \frac{x - p_t(x)}{1-e^{-2t}} + \nabla \log f^\|_t(x).
\end{equation}
Since $\nabla p_t(x)$ belongs to the tangent space of $\mathcal{M}_t$ at $p_t(x)$ (see Theorem C from \cite{Leobacher2021}), it is orthogonal to $x - p_t(x)$. The first term in \eqref{eq:scoreshape} is thus fully orthogonal to $\mathcal{M}_t$ and explodes as $t$ goes to $0$. Since $p_t:\mathcal{M}^{\tau_{\min}}_t \rightarrow \mathcal{M}_t$ is $k-1$ times differentiable \cite{Leobacher2021}, this first component is also $k-1$ times differentiable with derivatives of order $\frac{1}{t}$. We then show, by a succession of integrations by parts on the manifold, that the regularity of the second component is determined by the regularity of $\mu$.
\begin{lemma}
\label{lem:parallel-regularity}
There exists $R > 0$ such that for all $0<t<1$, for all $x \in \mathcal{M}_t^{\tau_{\min}/2}$, and for all $i\leq s$,
\[
 \left\|\nabla^i \log f^\|_t(x)\right\| \leq R.
\]
Additionally, for $j\leq 2$,
\[
 \|\nabla^{s+j} \log f^\|_t(x) \| \leq \frac{R}{(1-e^{-2t})^{j/2}}.
\]
\end{lemma}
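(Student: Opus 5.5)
Write $\sigma_t^2 = 1-e^{-2t}$, $z = p_t(x)$, $n = x-z$, so that $\|n\| < e^{-t}\tau_{\min}/2$ and $n$ is normal to $\mathcal{M}_t$ at $z$.

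\emph{Reduction to a Gaussian integral in a chart.} Pull $\nu_t$ back to $\mathcal{M}$ and, near $z$, use the chart $\Psi_{z}$ given by Assumption~\ref{hyp:diffusion} (rescaled by $e^{-t}$). The exponent of $f^\|_t$ is
\[
\phi_x(y)=\frac{\|z-y\|^2+2\langle n,z-y\rangle}{2\sigma_t^2}.
\]
If $y=\Psi(u)$ with $u\in T_z\mathcal{M}_t$, then $z-y = -u - N(u)$, where $N(u)=O(\|u\|^2)$ is normal. The cross term is therefore $-2\langle n, N(u)\rangle = O(\|n\|\,\|u\|^2)$. Since $\|n\|$ is at most half the reach, the quadratic form $\|u\|^2 - 2\langle n, \nabla^2 N(0)[u,u]\rangle$ stays uniformly positive definite, with eigenvalues at least about $1-\|n\|/\tau_{\min}\geq 1/2$. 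Outside a fixed neighborhood of size $r$, the reach condition gives $\phi_x \geq c/\sigma_t^2$, so that region contributes $e^{-c/\sigma_t^2}$, which is negligible together with all its derivatives.

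Localising with a smooth cutoff and substituting $u=\sigma_t v$ gives
\[
f^\|_t(x)= \int \chi(\sigma_t v)\, e^{-\Phi_x(v)}\, g(\Psi(\sigma_t v))\, J(\sigma_t v)\, dv + \text{exp.\ small},
\]
where $g$ is the density of $\nu_t$ (so $C^s$) and $J$ is the Jacobian. Here $\Phi_x(v) = \tfrac12 Q_x(v) + \sigma_t\, c_3(x,v)+\dots$ has coefficients that are smooth in $(z,n)$, hence in $x$ through $p_t$, which is $C^{k-1}$ with $k-1\ge s+1$. In particular $f^\|_t$ is bounded above and below by constants, because $f^\mu\ge f_{\min}$ and the Gaussian mass is of order $1$.

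\emph{Derivatives in $x$.} Each derivative in $x$ acts in one of two ways.
\begin{itemize}
\item On the chart and exponent coefficients it produces only bounded factors, because their $x$-dependence is through $z,n$ at unit scale.
\item On the base point it produces, via the change of variables moving the chart with $z$, a derivative $\partial_u$ of the integrand, i.e.\ $\sigma_t^{-1}\partial_v$.
\end{itemize}
In the second case, integrate by parts in $v$ so that the derivative falls on $g(\Psi(\sigma_t v))J(\sigma_t v)$, which returns a factor $\sigma_t$ and cancels the $\sigma_t^{-1}$. Alternatively, it falls on the Gaussian weight and gives a polynomial in $v$; the leading odd moments vanish by symmetry of $Q_x$, and the remainders carry extra factors $\sigma_t$ from $c_3$. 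This parallels the torus kernel argument of Proposition~\ref{prop:biais}: the Gaussian plays the role of a kernel of bandwidth $\sigma_t$, and each $\sigma_t^{-1}$ is traded for one derivative of the $C^s$ density. Hence $\nabla^i f^\|_t$ is $O(1)$ for $i\le s$. For $i=s+j$ with $j\in\{1,2\}$, the density has only $s$ derivatives, so $j$ factors $\sigma_t^{-1}$ remain uncompensated, giving $O(\sigma_t^{-j})$. To obtain the sharper $\sigma_t^{-j/2}$ stated in the lemma, I will use that a $\sigma_t^{-1}\partial_v$ hitting the Gaussian yields an odd moment. Pairing it against the first-order Taylor remainder of $\nabla^s g$ gains $\sigma_t$, and together with symmetry this should give half a power per extra derivative. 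I expect this bookkeeping to be delicate, and it is the point to verify most carefully.

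Finally, $\log$ of a function bounded below with these derivative bounds inherits them by Faà di Bruno: $\nabla^i\log f^\|_t$ is a polynomial in $\nabla^{a} f^\|_t/f^\|_t$. The highest-order terms dominate, so bounds of the form $R\sigma_t^{-j/2}$ survive. I expect the main obstacle to be the uniform control of the chart-change derivatives in $x$ up to order $s+2$ with $k \ge s+2$, together with the exact exponent $j/2$.
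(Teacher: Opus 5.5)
Your argument is the paper's argument. It uses a graph chart over the tangent space at the projected point, and the reach inequality to make the phase comparable to $\|p_t(x)-y\|^2/\sigma_t^2$; this gives both the lower bound on $f^\|_t$ and exponential smallness away from $p_t(x)$. Tangential $x$-derivatives are converted into derivatives in the integration variable and integrated by parts onto the $C^s$ density. The paper does this with Green's theorem on $\pi(B_r)$ and exponentially small boundary terms, where you use a smooth cutoff. The remaining coefficient dependence contributes bounded factors, the last $j\le 2$ derivatives fall on the kernel at a cost of $\sigma_t^{-1}$ each, and one then passes to $\log$. On the order-$(s+2)$ regularity of $p_t$ that you flag, the paper is no more careful than you: it only invokes that $p_t$ and the coordinate change are $C^{k-1}$.

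The one real problem is that you misread the target. With your $\sigma_t^2=1-e^{-2t}$, the lemma's bound $R(1-e^{-2t})^{-j/2}$ equals $R\sigma_t^{-j}$. That is exactly the $O(\sigma_t^{-j})$ your basic count produces, and exactly what the paper proves: $C(1-e^{-2t})^{-1/2}$ for $\nabla^{s+1}f^\|_t$ and $C(1-e^{-2t})^{-1}$ for $\nabla^{s+2}f^\|_t$. So your proof is complete at that point, and you should drop the planned refinement to $\sigma_t^{-j/2}$.

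That refinement is not only unnecessary but false under the hypotheses. Its mechanism pairs the odd moment against a first-order Taylor remainder of $\nabla^s g$, which needs an $(s+1)$-th derivative the density is not assumed to have; if you had that derivative you would get $O(1)$, not $\sigma_t^{-1/2}$. Concretely, locally $f^\|_t$ is, up to smooth corrections, a Gaussian smoothing of the density at scale $\sigma_t$. Suppose $\nabla^s f^\mu$ behaves like $|\theta|^\alpha$ with $\alpha<1/2$ in an arclength coordinate $\theta$. Then at distance $\sigma_t$ from that point, $\nabla^{s+1}f^\|_t$, and hence $\nabla^{s+1}\log f^\|_t$, is of exact order $\sigma_t^{\alpha-1}\gg\sigma_t^{-1/2}$.

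Correspondingly, in your Fa\`a di Bruno step the bound that survives is $R\sigma_t^{-j}$, not $R\sigma_t^{-j/2}$. It survives because, with $s\ge 2$ and $j\le 2$, each product contains at most one factor of order above $s$.
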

This result only deals with the regularity of $\score^\star_t$ close to the manifold $\mathcal{M}_t$. Fortunately, as we only need to properly estimate $\score^\star_t$ in the $L^2(\mu_t)$ sense, it is sufficient to properly estimate the score function around $\mathcal{M}_t$ on small times.
\begin{lemma}
\label{lem:cutscore}
There exist $T < 1$ and $R >0$ such that, for any $t < T$, there exists a function $\tilde{\score}_t$ such that
\begin{itemize}
    \item $\|\tilde{\score}_t\|_{W^{s+1,\infty}} \leq \frac{R}{1-e^{-2t}}$;
    \item $\|\tilde{\score}_t - \score_t^\star\|^2_{L^2(\mu_t)} \leq \frac{C }{(1-e^{-2t})^{2 + D/2}} e^{-\frac{C}{1-e^{-2t}}}$.
\end{itemize}
\end{lemma}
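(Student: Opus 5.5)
We construct $\tilde{\score}_t$ by cutting off the decomposition \eqref{eq:scoreshape} near $\mathcal{M}_t$ and extending it by the linear drift outside. Fix a smooth cutoff $\chi:\R^D\to[0,1]$ that equals $1$ on $\mathcal{M}_t^{\tau_{\min}/4}$ and vanishes outside $\mathcal{M}_t^{\tau_{\min}/2}$. Since $\mathcal{M}_t=e^{-t}\mathcal{M}$ with $t<T<1$, we can take $\chi(x)=\eta(d(x,\mathcal{M}_t)^2)$ with $\eta$ fixed. Because $\mathcal{M}$ has reach $>\tau_{\min}$ and is $k$-smooth, the squared distance is $C^{k-1}$ on the tube, so $\chi$ has $W^{s+1,\infty}$ norm bounded uniformly in $t<1$ (using $s+1\le k-1$). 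Set
\[
\tilde{\score}_t(x)=\chi(x)\Big(-\frac{x-p_t(x)}{1-e^{-2t}}+\nabla\log f^\|_t(x)\Big).
\]
This is $\score^\star_t$ on $\mathcal{M}_t^{\tau_{\min}/4}$ and zero far away.

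\textbf{Regularity bound.} The normal part $(x-p_t(x))/(1-e^{-2t})$ is $C^{k-1}$ on the tube. Since $\nabla^i p_t$ is bounded uniformly in $t<1$ for $i\le s+1\le k-1$ \cite{Leobacher2021}, its derivatives up to order $s+1$ are $O(1/(1-e^{-2t}))$. For the parallel part, Lemma~\ref{lem:parallel-regularity} gives $\|\nabla^i\log f^\|_t\|\le R$ for $i\le s$, and $\|\nabla^{s+j}\log f^\|_t\|\le R(1-e^{-2t})^{-j/2}$ for $j\le 2$, on $\mathcal{M}_t^{\tau_{\min}/2}$. The derivatives of $\nabla\log f^\|_t$ up to order $s+1$ involve $\nabla^{s+2}\log f^\|_t$, hence are $O((1-e^{-2t})^{-1})$. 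Leibniz' rule with the bounded derivatives of $\chi$ then gives $\|\tilde{\score}_t\|_{W^{s+1,\infty}}\le R/(1-e^{-2t})$ after enlarging $R$.

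\textbf{$L^2$ error.} Since $\tilde{\score}_t=\score^\star_t$ on $\mathcal{M}_t^{\tau_{\min}/4}$,
\[
\|\tilde{\score}_t-\score^\star_t\|^2_{L^2(\mu_t)}\le 2\int_{(\mathcal{M}_t^{\tau_{\min}/4})^c}\big(\|\tilde{\score}_t\|^2+\|\score^\star_t\|^2\big)\,d\mu_t .
\]
The first integrand is bounded by $C(1-e^{-2t})^{-2}$. For the second, Lemma~\ref{lem:simple-bias} gives
\[
\|\score^\star_t(x)\|\le \frac{C(1+\|x\|)}{1-e^{-2t}} .
\]
Write $\overrightarrow{X}_t=e^{-t}X+\sqrt{1-e^{-2t}}Z$ with $e^{-t}X\in\mathcal{M}_t$. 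Leaving the tube requires $\sqrt{1-e^{-2t}}\|Z\|\ge e^{-t}\tau_{\min}/4$, an event of probability at most $C e^{-c/(1-e^{-2t})}$ by Gaussian concentration in $\R^D$. Cauchy–Schwarz with the Gaussian moments of $\|x\|^2$ then bounds the integral by $C(1-e^{-2t})^{-2}e^{-c/(1-e^{-2t})}$. This is within the claimed $C(1-e^{-2t})^{-2-D/2}e^{-C/(1-e^{-2t})}$, and the extra polynomial factor absorbs crude bounds on the Gaussian tail. Take $T$ small enough that $e^{-t}\ge 1/2$ and the tubes behave as above.

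\textbf{Main obstacle.} The delicate step is the uniformity in $t$ of the regularity of the normal part and of the cutoff. This requires the derivative bounds on $p_t$ and on $d(\cdot,\mathcal{M}_t)^2$ to hold uniformly for the rescaled manifolds $e^{-t}\mathcal{M}$, $t<T$. Rescaling only multiplies the reach by $e^{-t}\in(e^{-T},1]$ and changes the chart derivatives by bounded factors, so the bounds transfer, but the bookkeeping relative to Assumption~\ref{hyp:diffusion} must be checked. We also need to check that Lemma~\ref{lem:parallel-regularity} indeed controls order $s+2$, which it does via $j=2$.
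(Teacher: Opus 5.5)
Your proof is correct and is essentially the paper's. On the support of your cutoff the bracketed expression equals $\score^\star_t$ by \eqref{eq:scoreshape}, so $\tilde{\score}_t$ is $\score^\star_t$ times a smooth cutoff of $d(x,\mathcal{M}_t)^2$, with the $W^{s+1,\infty}$ bound from Lemmas~\ref{lem:density-decomposition} and \ref{lem:parallel-regularity} and the $L^2$ error from a Gaussian tail of $Z_t$. The only minor variation is that the paper uses $\|\tilde{\score}_t-\score^\star_t\|\le\|\score^\star_t\|$ on the cut region and bounds $\|\score^\star_t(\overrightarrow{X}_t)\|^2\le \E[\|Z_t\|^2\mid \overrightarrow{X}_t]/(1-e^{-2t})$ via \eqref{eq:conditionalscoredef} and conditional Jensen, whereas you use Lemma~\ref{lem:simple-bias} with Cauchy--Schwarz; both give the same exponential bound.
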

Finally, smoothing $\tilde{\score}_t$ with an adequate kernel function as in the proof of Proposition~\ref{prop:biais} provides us with a function in $\mathcal{F}_{t, s, R, h, \ell}$ that is sufficiently close to $\score_t^\star$.
\begin{proposition}
\label{pro:final-bias}
There exist $T < 1, R> 0$ and $C$ independent of $n$ and $t$ such that for any $\window > 0$ there exists $\score^\mathcal{F}_t \in \mathcal{F}_{t, s, R, h, \ell}$ satisfying, for any $t < T$,
\[
\|\score^\mathcal{F}_t - \score^\star_t\|^2_{L^2(\mu_t)} \leq \frac{C\window^{2(s+1)}}{(1-e^{-2t})^2} + \frac{C }{(1-e^{-2t})^{2 + D/2}} e^{-\frac{C}{1-e^{-2t}}}.
\]
In particular, for $n$ sufficiently large, there exists $C' > 0$ independent of $n,t$ and $\window$ such that, if $1-e^{-2t} \leq \frac{C'}{\log(n)}$,
\[
\|\score^\mathcal{F}_t - \score_t^\star\|^2_{L^2(\mu_t)} \leq C\frac{\window^{2(s+1)} + n^{-1}}{(1-e^{-2t})^2}.
\]
\end{proposition}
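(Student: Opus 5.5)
Starting from the function $\tilde{\score}_t$ provided by Lemma~\ref{lem:cutscore}, I will build a kernel-smoothed version of it and show that it lies in $\mathcal{F}_{t,s,R,h,\ell}$ and is close to $\score^\star_t$ in $L^2(\mu_t)$. The steps mirror the proof of Proposition~\ref{prop:biais}.

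Fix $t<T$, with $T$ from Lemma~\ref{lem:cutscore}. Take a kernel $K$ on $\R^D$ with unit mass and vanishing moments of orders $1,\dots,s$. Require also $\|\nabla^k K\|\le C K$ pointwise for $k\le \ell$ (or, more safely, just $\int \|\nabla^k K\|<\infty$). Set
\[
\score_{t,h}(x)=\int_{\R^D} h^{-D}K\left(\frac{\|y-x\|}{h}\right)\tilde{\score}_t(y)\,dy .
\]

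\textbf{Step 1: membership in the class.} Differentiation commutes with convolution, so for $i\le s+1$ we get $\|\nabla^i\score_{t,h}\|_\infty\le\|\nabla^i\tilde{\score}_t\|_\infty\le R/(1-e^{-2t})$. For the orders $s+1+i$ with $i\le\ell$, put the $i$ extra derivatives on the kernel, as in the proof of Proposition~\ref{prop:biais}. Each one costs a factor $h^{-1}$, which gives
\[
\|\nabla^{s+1+i}\score_{t,h}\|_\infty\le C h^{-i}R/(1-e^{-2t}).
\]
Enlarging $R$ absorbs $C$, so $\score_{t,h}\in\mathcal{F}_{t,s,R,h,\ell}$.

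\textbf{Step 2: approximation.} A Taylor expansion of $\tilde{\score}_t$ to order $s+1$ kills the moments of orders $1$ to $s$ thanks to the vanishing-moment conditions. The remainder then gives
\[
\|\score_{t,h}-\tilde{\score}_t\|_{L^\infty}\le C h^{s+1}\|\nabla^{s+1}\tilde{\score}_t\|_\infty\le C h^{s+1}/(1-e^{-2t}).
\]
Squaring and integrating against the probability measure $\mu_t$ yields the term $C h^{2(s+1)}/(1-e^{-2t})^2$. Combining with the second item of Lemma~\ref{lem:cutscore} through $\|a+b\|^2\le 2\|a\|^2+2\|b\|^2$ gives the first bound of the proposition with $\score^\mathcal{F}_t:=\score_{t,h}$. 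If the proposition is meant for the actual minimizer of $L_t$ over the class, the bound transfers: by the analogue of \eqref{eq:diffL} for $L_t$, $\|\score^\mathcal{F}_t-\score^\star_t\|^2_{L^2(\mu_t)}=L_t(\score^\mathcal{F}_t)-L_t(\score^\star_t)\le L_t(\score_{t,h})-L_t(\score^\star_t)=\|\score_{t,h}-\score^\star_t\|^2_{L^2(\mu_t)}$.

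\textbf{Step 3: the ``in particular'' statement.} Write $u=1-e^{-2t}$. I need $u^{-D/2}e^{-C/u}\le n^{-1}$. If $u\le C'/\log n$ with $C'<C$, then $e^{-C/u}\le n^{-C/C'}$. The factor $u^{-D/2}$ is at most polynomial in $n$ only if $u$ is bounded below, which is not given here. Instead, observe that $u^{-D/2}e^{-C/(2u)}$ is bounded uniformly in $u\in(0,1]$. It therefore suffices to have $e^{-C/(2u)}\le n^{-1}$, i.e.\ $C'\le C/2$.

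\textbf{Main obstacle.} The substance of the argument is already contained in Lemma~\ref{lem:cutscore}, which I take as given. The delicate point left is Step 1: the kernel condition $\|\nabla^k K\|\le CK$ must not conflict with the vanishing moments. I would avoid that condition altogether and use only integrability of $\nabla^k K$, which suffices for sup-norm bounds via Young's inequality. A second subtlety is that $W^{s+1,\infty}$ includes the order-$0$ norm, but convolution preserves every sup-norm bound, so this causes no trouble.
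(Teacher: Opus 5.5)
Your proposal is correct and follows the paper's route: the paper's entire argument is to smooth the clipped score $\tilde{\score}_t$ of Lemma~\ref{lem:cutscore} with a kernel as in the proof of Proposition~\ref{prop:biais}, and you supply the details the paper leaves out (vanishing moments and Taylor remainder, transfer to the minimizer via $L_t$, and absorbing $(1-e^{-2t})^{-D/2}$ into half of the exponential for the ``in particular'' part). Replacing the pointwise condition $\|\nabla^k K\|\le CK$ by integrability of $\nabla^k K$ is a sound fix, because a kernel with vanishing second moments must be signed. Two small consequences to note, both harmless: the bounds of order $i\le s+1$ pick up a factor $\|K\|_{L^1}$, which you absorb into $R$; and the lower-order parts of the $W^{s+1+i,\infty}$ norm fit under $R/((1-e^{-2t})h^i)$ only when $h\le 1$, so for $h\geq 1$ simply take $g=0$.
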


\subsection{Variance term} 
\label{sec:variancediffusion}
Let $t > 0, R \geq \sqrt{D}, \ell \in \mathbb{N}, \window > 0$ and let $C$ a generic constant independent of $t, n$ and $\window$. For any $r > 0$, let 
\begin{multline*}
\mathcal{L}_r = \bigg\{ \frac{(1-e^{-2t})^2}{8 R^2} (l_{\score_t^\mathcal{F},t} - l_{g,t}) \  \mid  \\
\ g \in \mathcal{F},   \|g - \score^\mathcal{F}_t\|_{L^2(\mu_t)} + \| \score^\mathcal{F}_t - \score_t^\star\|_{L^2(\mu_t)}\leq \frac{C r}{ (1-e^{-2t})^{3/2}} \bigg\}
\end{multline*}
and, for any $\delta > 0$, we denote by $H_r(\delta)$ the $\delta$-entropy number of $\mathcal{L}_r$ with respect to the infinity norm on $\mathcal{M}$.  Finally, for any $r > 0$, let 
\begin{equation}
\label{eq:erdiffusion}
E_r = \sum_{j=0}^{\infty} 2^{-j} \sqrt{H_r(2^{-(j+1)} r)}.
\end{equation}
As in the non-diffusion case, we rely on a chaining result to bound the variance of the estimator. \begin{lemma}
\label{lem:chaining}
If $\mathcal{F} \subset \mathcal{F}_{t, s, R, h, \ell}$ then there exists $C > 0$ independent of $t$ and $n$ such that 
\begin{multline*}
\E\left[(\widehat{L}_t(\score^\mathcal{F}_t) - \widehat{L}_t(\hat{\score}_t)) -  (L_t(\score^\mathcal{F}_t) - L_t(\hat{\score}_t))\right] \leq \\
C \left(  \|\score^\mathcal{F}_t - \score_t^\star\|^2_{L^2(\mu_t)} + \frac{1}{(1-e^{-2t})^2} \left(r_\star^2  + \frac{1}{ n} \right) \right),
\end{multline*}
where $r_\star$ is the solution of $\sqrt{n} r^2 =  C r_t E_{r_t}$, where $r_t = C r \max\left(\sqrt{1-e^{-2t}}, r^{^\frac{2(s+1)}{2(s+1)+d}}\right)$.
\end{lemma}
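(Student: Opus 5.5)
The proof will follow the localized chaining of Proposition~\ref{prop:variance}, Steps 2--3, adapted to the diffusion loss. The centered empirical process is indexed by the rescaled functions $\frac{(1-e^{-2t})^2}{8R^2}(l_{\score^\mathcal{F}_t,t}-l_{g,t})$, and $\mathcal{L}_r$ is exactly this family localized by the $L^2(\mu_t)$-distance to $\score^\mathcal{F}_t$.

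\textbf{Step 1: uniform bound and localization.} The normalization by $(1-e^{-2t})^2/(8R^2)$ should make every element of $\mathcal{L}_r$ bounded by $1$ in sup norm on $\mathcal{M}$. The divergence part is controlled directly by $\|g\|_{W^{1,\infty}}\le R/(1-e^{-2t})$. The square part $P_t(\|g\|^2)$ has no global sup bound, so I would handle it through the Gaussian smoothing of $P_t$ together with the $W^{1,\infty}$ control.

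Next I need a variance bound for an element $\frac{(1-e^{-2t})^2}{8R^2}(l_{\score^\mathcal{F}_t,t}-l_{g,t})$ under $\mu$. I would aim for a bound of order $r_t^2$, where the natural radius is $\|g-\score^\mathcal{F}_t\|_{L^2(\mu_t)}$ rescaled by $(1-e^{-2t})^{3/2}$.
\begin{itemize}
\item The square term contributes $P_t\langle g-\score^\mathcal{F}_t,\,g+\score^\mathcal{F}_t\rangle$. Its second moment under $\mu$ is bounded by Jensen by $\|g-\score^\mathcal{F}_t\|^2_{L^2(\mu_t)}$ times sup factors. This gives the part of $r_t$ proportional to $r\sqrt{1-e^{-2t}}$.
\item The divergence term $P_t(\div(g-\score^\mathcal{F}_t))$ is the delicate one. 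I would first integrate by parts against the Gaussian kernel (the smoothing lemma referred to as Lemma~\ref{lem:integrationByParts}), which turns it into $(1-e^{-2t})^{-1/2}\E[\langle Z_t, (g-\score^\mathcal{F}_t)(\overrightarrow{X}_t)\rangle\mid X]$. Alternatively, I would use a Gagliardo--Nirenberg interpolation on the $d$-dimensional manifold between $L^2$ and the $W^{s+1,\infty}$ bound, with the $L^2$ norm taken along $\mathcal{M}$. The interpolation is what produces the exponent $r^{2(s+1)/(2(s+1)+d)}$.
\end{itemize}
Combining these should give second moment at most $r_t^2$, with $r_t=Cr\max(\sqrt{1-e^{-2t}},r^{2(s+1)/(2(s+1)+d)})$.

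\textbf{Step 2: chaining.} With this variance, I would symmetrize with Rademacher variables and apply the Massart chaining bound with entropy measured in the sup norm on $\mathcal{M}$. I would then use Theorem 11.8 of \cite{Massart} to control the empirical second moment $\delta_n$ and solve the resulting quadratic inequality, exactly as for \eqref{etape11}. This gives $\E[S_r]\le C r_t E_{r_t}$ for $n$ large.

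\textbf{Step 3: peeling.} The map $r\mapsto r_tE_{r_t}$ is sub-quadratic, since $r\mapsto E_{r}/r$ is nonincreasing. I would therefore apply Theorem 13.19 of \cite{Massart} with fixed point $r_\star$ solving $\sqrt n r^2=Cr_tE_{r_t}$. This bounds the variance term by $\epsilon\,\|\hat\score_t-\score^\mathcal{F}_t\|^2_{L^2(\mu_t)}$ plus $(1-e^{-2t})^{-2}(r_\star^2+x/n)$ with high probability; integrating over $x$ gives the expectation bound.

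Finally I would absorb the $\epsilon$ term. Using $\|\hat\score_t-\score^\mathcal{F}_t\|^2\le2\|\hat\score_t-\score_t^\star\|^2+2\|\score^\mathcal{F}_t-\score_t^\star\|^2$ together with Lemma~\ref{lem:bias-variance}, and taking $\epsilon$ small, the excess is absorbed into the left-hand side. What remains is the bias $\|\score^\mathcal{F}_t-\score^\star_t\|^2$ on the right-hand side.

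\textbf{Main obstacle.} The hard step is the divergence part of the variance in Step 1. It requires a precise interpolation or integration by parts on the manifold, and it is the only place the exponent $2(s+1)/(2(s+1)+d)$ comes from.
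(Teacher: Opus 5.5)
Your overall architecture matches the paper's: normalize so that $|\tilde l|\le 1$, bound the variance by $r_t^2$, symmetrize and chain with Theorem 11.8 of \cite{Massart} for $\delta_n$, then apply Theorem 13.19, absorb via Lemma~\ref{lem:bias-variance}, and integrate in $x$. The gap is in your Step 1 variance bound, where you have the roles of the two terms reversed.

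\textbf{The divergence term is the easy one.} Your own Gaussian integration by parts (Lemma~\ref{lem:integrationByParts}) already gives $|P_t\div(g-\score^\mathcal{F}_t)|\le R\sqrt{P_t\|g-\score^\mathcal{F}_t\|^2/(1-e^{-2t})}$. After the factor $(1-e^{-2t})^2$, its second moment is $\lesssim (1-e^{-2t})^3\|g-\score^\mathcal{F}_t\|^2_{L^2(\mu_t)}$, which is exactly the $\sqrt{1-e^{-2t}}$ branch of $r_t$. No interpolation is needed there. An intrinsic interpolation on $\mathcal{M}$ could not control the normal derivatives hidden in $\div$ anyway.

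\textbf{The square term is where your argument fails.} The class does give a sup bound, since the $W^{s+1,\infty}$ norm contains the $L^\infty$ norm. But that bound is only $\|g+\score^\mathcal{F}_t\|_\infty\le 2R/(1-e^{-2t})$. Jensen with this sup factor gives a second moment of order $(1-e^{-2t})^2\|g-\score^\mathcal{F}_t\|^2_{L^2(\mu_t)}$. That loses a factor $\sqrt{1-e^{-2t}}$: you get $r_t\asymp r$, which yields a strictly worse fixed point than the one in the statement. Your argument would only work under the bound $\|g\|_\infty\lesssim(1-e^{-2t})^{-1/2}$, which the paper explicitly does not impose on the class.

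\textbf{The missing idea.} Decompose $g+\score^\mathcal{F}_t=(g-\score^\mathcal{F}_t)+2(\score^\mathcal{F}_t-\score^\star_t)+2\score^\star_t$. Then use a property of the true score only: $\|P_t\|\score^\star_t\|^2\|_{L^\infty(\mathcal{M})}\le C/(1-e^{-2t})$.
\begin{itemize}
\item The $\score^\star_t$ piece produces the term $(1-e^{-2t})^{3/2}\sqrt{P_t\|g-\score^\mathcal{F}_t\|^2}$, i.e.\ the $\sqrt{1-e^{-2t}}$ branch.
\item The other two pieces produce quadratic remainders $(1-e^{-2t})^2\bigl(P_t\|g-\score^\mathcal{F}_t\|^2+P_t\|\score^\mathcal{F}_t-\score^\star_t\|^2\bigr)$. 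Their second moments are fourth-order quantities.
\item The paper reduces $P_t\|h\|^2$ to $\|P_th\|^2$ plus a lower-order term, using $P_t\|h\|^2=\|P_th\|^2+2\int_0^tP_{t-u}\|\nabla P_uh\|^2\,du$ together with Lemma~\ref{lem:integrationByParts}.
\item It then bounds $\|P_th\|^4_{L^4(\mu)}$ by Gagliardo--Nirenberg on $\mathcal{M}$, interpolating between $\|P_th\|_{L^2(\mu)}$ and $\|\nabla^{s+1}P_th\|_{L^\infty}$.
\end{itemize}
That $L^4$ interpolation of the quadratic part of the square term, not anything about the divergence, is the source of the exponent $2(s+1)/(2(s+1)+d)$.

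It is also why the bias $\|\score^\mathcal{F}_t-\score^\star_t\|^2_{L^2(\mu_t)}$ must enter the localization functional itself, and not only your final absorption step. The paper localizes with $(1-e^{-2t})^2\bigl(\|g-\score^\mathcal{F}_t\|^2_{L^2(\mu_t)}+\|g-\score^\star_t\|^2_{L^2(\mu_t)}\bigr)$ for this reason.
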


We are then left with bounding the entropy numbers of $\mathcal{L}_r$. We start by exploiting the smoothing properties of the semigroup $(P_t)_{t \geq 0}$ to show that $\mathcal{L}_r$ is contained in a Sobolev ball of order $s+2+\ell$. 
\begin{lemma}
\label{lem:lossregularity}
If $\mathcal{F} \subset \mathcal{F}_{t, s, R, h, \ell}$ then there exists $C > 0$ independent of $t$ and $n$ such that 
\begin{equation}
\label{eq:bound1}
\forall \tilde{l} \in \mathcal{L}_r,\  \|\tilde{l}\|_{W^{s+2+\ell,2}(\mu)} \leq \frac{C}{\window^\ell} \left(1 + \frac{r}{1-e^{-2t}} \right)
\end{equation}
and
\begin{equation}
\label{eq:bound1bis}
\forall \tilde{l} \in \mathcal{L}_r,\  \|\tilde{l}\|_{W^{s+1+\ell,2}(\mu)} \leq \frac{C}{\window^\ell}.
\end{equation}
Furthermore, we also have for any $m \in \N$,
\begin{equation}
\label{eq:bound2}
\forall \tilde{l} \in \mathcal{L}_r,\  \|\tilde{l}\|_{W^{s+2+m,2}(\mu)} \leq \frac{C}{(1-e^{-2t})^{m/2}} \left(1 + \frac{r}{1-e^{-2t}} \right)
\end{equation}
and
\begin{equation}
\label{eq:bound2bis}
\forall \tilde{l} \in \mathcal{L}_r,\  \|\tilde{l}\|_{W^{s+2+m,2}(\mu)} \leq \frac{C}{(1-e^{-2t})^{m/2}}.
\end{equation}
\end{lemma}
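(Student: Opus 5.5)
The plan is to write every element of $\mathcal{L}_r$ as $\tilde l = c_t P_t(\phi)$ with $c_t = \frac{(1-e^{-2t})^2}{8R^2}$ and
\[
\phi = 2\div(\score^\mathcal{F}_t - g) + \|\score^\mathcal{F}_t\|^2 - \|g\|^2 = 2\div(\score^\mathcal{F}_t - g) - \left<g - \score^\mathcal{F}_t, g + \score^\mathcal{F}_t\right>,
\]
and then to use two different mechanisms to differentiate $P_t\phi$ along $\mathcal{M}$. Since $\mu$ has density bounded above with respect to the volume measure of the compact manifold $\mathcal{M}$, it suffices to bound $\sup_{\mathcal{M}}\|\nabla^k P_t\phi\|$ for the relevant $k$. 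Here derivatives along $\mathcal{M}$ are controlled by ambient derivatives of $P_t\phi$ together with derivatives of the charts $\Psi_x$, which is allowed since $s+2+\ell \le k$ up to the requirements of Assumption~\ref{hyp:diffusion}. I will use the explicit formula $P_t\phi(x) = \E[\phi(e^{-t}x + \sqrt{1-e^{-2t}}Z)]$.

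For \eqref{eq:bound1bis} and \eqref{eq:bound1}, I differentiate under the expectation, so that $\nabla^k P_t\phi = e^{-kt} P_t(\nabla^k\phi)$. By Leibniz, $\nabla^{s+1+\ell}\phi$ involves at most $s+2+\ell$ derivatives of $g$ and $\score^\mathcal{F}_t$ in the divergence term. That exceeds the class bound by one, so I move one derivative onto the Gaussian through the Gaussian integration by parts $\nabla P_t \psi = \frac{e^{-t}}{\sqrt{1-e^{-2t}}}\E[\psi(\cdot)Z]$, presumably the content of Lemma~\ref{lem:integrationByParts}. The resulting factor $(1-e^{-2t})^{-1/2}$ is harmless after multiplication by $c_t$. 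The class bounds give $\|\nabla^{s+1+i} g\|_\infty \le R/((1-e^{-2t})h^i)$. The divergence term then contributes $c_t \cdot (1-e^{-2t})^{-1}h^{-\ell}$ for $W^{s+1+\ell}$ (after placing the extra derivative via Gaussian IBP), and the quadratic term contributes $c_t(1-e^{-2t})^{-2}h^{-\ell}$. Both are bounded by $C h^{-\ell}$, which gives \eqref{eq:bound1bis}.

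For \eqref{eq:bound1}, one more derivative is needed. Here I use the smallness from the constraint in $\mathcal{L}_r$: I write $\nabla^{s+2+\ell}P_t\phi$ with two derivatives moved to the Gaussian. This leaves Gaussian weights multiplying $\nabla^{s+\ell}\phi$, and Cauchy–Schwarz under $P_t$ yields terms of the form $P_t(\|\nabla^j(g-\score^\mathcal{F}_t)\|^2)^{1/2}$. I then bound the $L^2(\mu_t)$-type quantity $\E_\mu[P_t\psi] = \|\psi\|_{L^1(\mu_t)}$ by $\|g-\score^\mathcal{F}_t\|_{L^2(\mu_t)} \le Cr(1-e^{-2t})^{-3/2}$, interpolating high derivatives against this low norm. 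This produces the extra $\frac{r}{1-e^{-2t}}$ term; the "$1$" comes from the crude sup bound as before.

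For \eqref{eq:bound2bis} and \eqref{eq:bound2}, I instead take only $s+2$ (respectively $s+1$) derivatives onto $\phi$ as above and push the remaining $m$ derivatives entirely onto the Gaussian kernel. Iterating the Gaussian IBP gives $\nabla^{m}P_t\psi = \frac{e^{-mt}}{(1-e^{-2t})^{m/2}}\E[\psi(\cdot)H_m(Z)]$ with a Hermite tensor $H_m$, and $\E\|H_m(Z)\|^2 < \infty$. This yields the factor $(1-e^{-2t})^{-m/2}$ with no dependence on $h$ or $\ell$, and the rest of the argument is identical to the previous case.

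The main obstacle is the $r$-dependent bound \eqref{eq:bound1}/\eqref{eq:bound2}. Passing from an $L^2(\mu_t)$ closeness of $g$ to $\score^\mathcal{F}_t$ to pointwise control of $P_t$ of derivatives on $\mathcal{M}$ is not immediate, since $\mu_t$ only averages $P_t$ over $\mu$. I would first try to establish the bound in $W^{\cdot,2}(\mu)$ rather than sup norm, exactly as stated, using $\int P_t(\psi^2)\,d\mu = \|\psi\|^2_{L^2(\mu_t)}$ after Cauchy–Schwarz, so that only $L^2(\mu_t)$ norms of $g-\score^\mathcal{F}_t$ and of its derivatives appear. The derivatives themselves would then be handled via Gagliardo–Nirenberg interpolation against the sup-norm class bounds, as in Proposition~\ref{prop:variance}.
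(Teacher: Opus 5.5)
Your handling of \eqref{eq:bound1bis} is sound and matches the paper: you differentiate under $P_t$, bound the Leibniz terms by the sup-norm class bounds, and push the one excess derivative of the divergence term onto the Gaussian via Lemma~\ref{lem:integrationByParts}. The gap is in the $r$-dependent bounds \eqref{eq:bound1} and \eqref{eq:bound2}, at the claim that ``the $1$ comes from the crude sup bound as before''. At order $s+2+\ell$, two pieces are indeed $O(\window^{-\ell})$ after the normalization $c_t=\frac{(1-e^{-2t})^2}{8R^2}$: the divergence part (with two derivatives on the Gaussian), and the Leibniz terms $\nabla^j g\otimes\nabla^{s+2+\ell-j}g$ with $1\le j\le s+1+\ell$ (which need no Gaussian derivative at all). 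The term pairing $g$ with $\nabla^{s+2+\ell}g$ is different. It forces one derivative onto the Gaussian, costing $(1-e^{-2t})^{-1/2}$. If you then bound $g$ by its sup norm $R/(1-e^{-2t})$, you get only $C(1-e^{-2t})^{-1/2}\window^{-\ell}$, which is not $O(\window^{-\ell})$ uniformly in $t$. Moving two derivatives onto the Gaussian for all of $\phi$, as you propose, is worse for the quadratic part. It costs $(1-e^{-2t})^{-1}$ but gains only one power of $\window$, so with sup bounds the term $g\otimes\nabla^{s+\ell}g$ alone contributes $C(1-e^{-2t})^{-1}\window^{-(\ell-1)}$. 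That exceeds the target by a factor $\window/(1-e^{-2t})$, which blows up in the small-time regime $\sqrt{1-e^{-2t}}<\window$ where \eqref{eq:bound1} is meant to be used.

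The missing ingredient is the $L^2(\mu_t)$ size of the true score. The paper moves exactly one derivative in that top-order term and keeps $\|\nabla^{s+1+\ell}g\|_{L^\infty(\R^D)}$ in sup norm. It then puts the undifferentiated factor $g$ itself in $L^2$: $\|\sqrt{P_t\|g\|^2}\|_{L^2(\mu)}=\|g\|_{L^2(\mu_t)}\le\|g-\score^\mathcal{F}_t\|_{L^2(\mu_t)}+\|\score^\mathcal{F}_t-\score^\star_t\|_{L^2(\mu_t)}+\|\score^\star_t\|_{L^2(\mu_t)}$. The first two terms are at most $Cr(1-e^{-2t})^{-3/2}$ by definition of $\mathcal{L}_r$, and they give the $r/(1-e^{-2t})$ term. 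The last satisfies $\|\score^\star_t\|_{L^2(\mu_t)}\le\sqrt{D/(1-e^{-2t})}$ by \eqref{eq:conditionalscoredef} and Jensen. This gain of $\sqrt{1-e^{-2t}}$ over the sup-norm bound on $g$ exactly offsets the Gaussian derivative, and it is the real source of the ``$1$''.

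Your plan never uses this bound, and smallness of $g-\score^\mathcal{F}_t$ cannot replace it. In the factored form, one top-order term inside the Gaussian derivative is $\langle\nabla^{s+1+\ell}(g-\score^\mathcal{F}_t),\,g+\score^\mathcal{F}_t\rangle$. Its companion factor $g+\score^\mathcal{F}_t$ is not small, and $\nabla^{s+1+\ell}$ is the highest derivative the class controls, so there is nothing to interpolate it against. Gagliardo--Nirenberg with respect to $\mu_t$ on $\R^D$ also fails uniformly in $t$: a function varying linearly in the normal direction to $\mathcal{M}_t$ has $L^2(\mu_t)$ norm only of order $\sqrt{1-e^{-2t}}$ times its gradient. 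The paper only interpolates on $\mathcal{M}$ with respect to $\mu$.

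The same gap carries over to \eqref{eq:bound2}. Finally, with $s+1$ derivatives on $\phi$ and $m$ on the Gaussian, the $r$-free estimate you obtain is in $W^{s+1+m,2}(\mu)$, the analogue of \eqref{eq:bound1bis}. It is not in the $W^{s+2+m,2}(\mu)$ norm written in \eqref{eq:bound2bis}. Reaching order $s+2+m$ without the $r$-term runs into the same term pairing $g$ with $\nabla^{s+2}g$.
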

Let us discuss the two bounds obtained. Relying on the kernel-like smoothing of vector fields in $\mathcal{F}$, we will obtain Equations~(\ref{eq:bound1}) and (\ref{eq:bound1bis}) which are sharper when $\sqrt{1-e^{-2t}} < \window$. On the other hand, one can also fully exploit the regularizing properties of $(P_t)_{t \geq 0}$ to derive Equation~(\ref{eq:bound2}) and (\ref{eq:bound2bis}) which is usually tighter when $\sqrt{1-e^{-2t}} \geq \window$. Using these various bounds and applying \cite[Theorem 2]{edmunds} to bound the entropy numbers of $\mathcal{L}_r$ yields the following variance bound.
\begin{proposition}
\label{pro:final-variance}
If $2(s+1+\ell) > d$ and if $\mathcal{F} \subset \mathcal{F}_{t, s, R, h, \ell}$ then there exists $C$ independent of $t,n$ and $\window$ such that  
\begin{multline*}
\E\left[(\widehat{L}_t(\score^\mathcal{F}_t) - \widehat{L}_t(\hat{\score}_t)) -  (L_t(\score^\mathcal{F}_t) - L_t(\hat{\score}_t))\right] \leq \\
C \left(  \|\score^\mathcal{F}_t - \score_t^\star\|^2_{L^2(\mu_t)} + \frac{1}{(1-e^{-2t})^2} \left(r_\star^2  + \frac{1}{ n} \right) \right),
\end{multline*}
where, depending on the values of $t$ and $h$, we can choose either
\begin{multline*}
r_\star^2 = \max\bigg( \left(n^{2(s+2+\ell)} (1-e^{-2t})^{d-2(s+2+\ell)} \window^{2\ell d}\right)^{-\frac{1}{2(s+2+\ell)+d}},  \\
(n^{2(s+1+\ell)} \window^{2 \ell d})^{-\frac{2(s+1)+d}{d(2(s+1+\ell) + 4(s+1)+d)}}\bigg).
\end{multline*}
or, for any $m \in \mathbb{N}$ such that $2(s+1+m) > d$,  
\begin{multline*}
r_\star^2 = \max\bigg( \left(n^{2(s+2+m)} (1-e^{-2t})^{d(1+m) - 2(s+2+m)} \right)^{-\frac{1}{2(s+2+m)+d}} , \\
(n^{2(s+1+m)} (1-e^{-2t})^{dm})^{-\frac{2(s+1)+d}{d(2(s+1+m) + 4(s+1)+d)}}\bigg).
\end{multline*}
\end{proposition}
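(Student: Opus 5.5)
The plan is to combine Lemma~\ref{lem:chaining} with bounds on the entropy numbers $H_r$ derived from Lemma~\ref{lem:lossregularity}. Lemma~\ref{lem:chaining} already gives the stated bound, with $r_\star$ the solution of $\sqrt{n} r^2 = C r_t E_{r_t}$. It therefore remains to bound $E_{r}$ as an explicit power of $r$ and then solve this fixed-point equation in each of the two regimes.

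\textbf{Step 1: entropy bounds.} By Lemma~\ref{lem:lossregularity}, $\mathcal{L}_r$ lies in a Sobolev ball on the compact $d$-dimensional manifold $\mathcal{M}$.
\begin{itemize}
\item Equation~\eqref{eq:bound1} places it in a ball of order $\alpha = s+2+\ell$ with radius $A_1 = C\window^{-\ell}(1 + r/(1-e^{-2t}))$.
\item Equation~\eqref{eq:bound1bis} places it in a ball of order $\alpha' = s+1+\ell$ with radius $A_2 = C\window^{-\ell}$.
\end{itemize}
Covering $\mathcal{M}$ by finitely many charts $\Psi_x$ (which are smooth by Assumption~\ref{hyp:diffusion}) transfers these balls to Euclidean Sobolev balls. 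Using $\mu$ bounded below, \cite[Theorem 2]{edmunds} then gives
\[
H_r(\delta) \leq C (A/\delta)^{d/\alpha}
\]
in sup norm, provided $2\alpha > d$. This is where the hypotheses $2(s+1+\ell)>d$ and $2(s+1+m)>d$ enter. Under these conditions the dyadic series in \eqref{eq:erdiffusion} converges, and
\[
E_r \leq C (A/r)^{d/(2\alpha)}.
\]
I will keep the better of the two radius bounds, giving
\[
E_r \leq C\min\Big( (A_1/r)^{\frac{d}{2(s+2+\ell)}},\ (A_2/r)^{\frac{d}{2(s+1+\ell)}}\Big).
\]
The $m$-regime is identical, using \eqref{eq:bound2} and \eqref{eq:bound2bis}. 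There $\window^{-\ell}$ is replaced by $(1-e^{-2t})^{-m/2}$, and the second bound is of order $s+2+m$ (or $s+1+m$ after shifting indices, to match the displayed exponent).

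\textbf{Step 2: solving the fixed point.} Write $\sigma = 1-e^{-2t}$. Then $r_t = Cr\max(\sqrt{\sigma}, r^{2(s+1)/(2(s+1)+d)})$, and I split into the two cases of this maximum, which correspond to the two terms of the max in $r_\star^2$ (Remark~\ref{rem:supportvsmeasure}).

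\emph{Case $r_t \asymp r\sqrt{\sigma}$.} Use the first entropy bound. In the relevant range $r\lesssim\sigma$, we have $A_1\asymp \window^{-\ell}\,r/\sigma$. The equation becomes
\[
\sqrt{n} r^2 \asymp r\sqrt{\sigma}\,\big(\window^{-\ell}\sigma^{-3/2}\big)^{\frac{d}{2(s+2+\ell)}}.
\]
Solving for $r$ should reproduce the first term of \eqref{eq:r1}, once exponents are checked. If instead $r\gtrsim\sigma$, the radius $A_1$ is dominated by the constant term and the argument goes through with $A_2$.

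\emph{Case $r_t\asymp r^{1+\frac{2(s+1)}{2(s+1)+d}}$.} Use $A_2=C\window^{-\ell}$ with exponent $d/(2(s+1+\ell))$. The equation becomes
\[
\sqrt n\, r^2 \asymp r_t^{1-\frac{d}{2(s+1+\ell)}}\,\window^{-\frac{\ell d}{2(s+1+\ell)}}.
\]
This is a pure power equation whose solution should give the second term, with the characteristic exponent $\frac{2(s+1)+d}{d(2(s+1+\ell)+4(s+1)+d)}$.

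Taking $r_\star$ to be the maximum of the two solutions guarantees that the fixed-point inequality holds for all $r \geq r_\star$. Substituting into Lemma~\ref{lem:chaining} finishes the proof. The $m$-regime is obtained by the same computation with $\window^{-\ell}$ replaced by $\sigma^{-m/2}$, which produces the factors $\sigma^{d(1+m)}$ and $\sigma^{dm}$ appearing in \eqref{eq:r2}.

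\textbf{Main obstacle.} I expect the main difficulty to be in Step 1. Theorem 2 of \cite{edmunds} controls entropy in sup norm for an $L^2$-Sobolev ball on a domain. Here the ball is measured in $W^{\cdot,2}(\mu)$ on a manifold, and the class $\mathcal{L}_r$ is defined through a localization in $L^2(\mu_t)$ rather than on $\mathcal{M}$. The transfer through charts, and the lower bound on $f^\mu$ used to compare with volume measure, must be done with constants independent of $t$. The exponent bookkeeping in Step 2 is routine but must be matched carefully to the stated formulas.
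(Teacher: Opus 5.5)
Your overall plan is the paper's: apply Lemma~\ref{lem:chaining}, bound $E_{r_t}$ through Lemma~\ref{lem:lossregularity} and \cite[Theorem 2]{edmunds}, split along the two branches of the maximum defining $r_t$, solve the two power-type fixed-point equations, and take the larger root. Your second case is right. Order $s+1+\ell$ with radius $C\window^{-\ell}$ from \eqref{eq:bound1bis} is exactly what yields the stated exponent $\frac{2(s+1)+d}{d(2(s+1+\ell)+4(s+1)+d)}$. (The paper's displayed bound on $E_{r_t}$ in that case has $s+2+\ell$, but its root corresponds to $s+1+\ell$.)

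The genuine problem is your first case. With $\sigma=1-e^{-2t}$, you have inverted the two regimes of the factor $1+r/\sigma$ in \eqref{eq:bound1}: for $r\lesssim\sigma$ this factor is of order $1$, not of order $r/\sigma$. With your choice $A_1\asymp\window^{-\ell}r/\sigma$, the ratio $A_1/r_t=\window^{-\ell}\sigma^{-3/2}$ does not depend on $r$. So $r_tE_{r_t}$ is linear in $r$, and the fixed point is
\[
r_\star^2\asymp n^{-1}\,\sigma^{1-\frac{3d}{2(s+2+\ell)}}\,\window^{-\frac{\ell d}{s+2+\ell}} .
\]
This is not the first term of \eqref{eq:r1}, which carries the nonparametric power $n^{-2(s+2+\ell)/(2(s+2+\ell)+d)}$. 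No exponent bookkeeping turns one into the other, and the same defect would propagate to the first term of \eqref{eq:r2}.

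What is missing is the observation that the first branch of the maximum is exactly the regime where the constant term dominates. Indeed $r_t\asymp r\sqrt{\sigma}$ requires $r^{2(s+1)/(2(s+1)+d)}\le\sqrt{\sigma}$, i.e.\ $r\le\sigma^{\frac{2(s+1)+d}{4(s+1)}}$. Hence $r_t\lesssim\sigma^{1+\frac{d}{4(s+1)}}\le\sigma$, and the $W^{s+2+\ell,2}(\mu)$-radius of $\mathcal{L}_{r_t}$ given by \eqref{eq:bound1} is just $C\window^{-\ell}$. Then $E_{r_t}\le C(\window^{\ell}r\sqrt{\sigma})^{-d/(2(s+2+\ell))}$, and solving
\[
\sqrt{n}\,r^2=C\,r\sqrt{\sigma}\,\bigl(\window^{\ell}r\sqrt{\sigma}\bigr)^{-\frac{d}{2(s+2+\ell)}}
\]
gives exactly $\bigl(n^{2(s+2+\ell)}\sigma^{d-2(s+2+\ell)}\window^{2\ell d}\bigr)^{-1/(2(s+2+\ell)+d)}$. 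This is the paper's $\psi_1$; the paper drops the factor $1+r/\sigma$ without comment, and the computation above is what justifies it.

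Replacing $\window^{-\ell}$ by $\sigma^{-m/2}$ from \eqref{eq:bound2} then gives the first term of \eqref{eq:r2}. Your fallback ``if $r\gtrsim\sigma$, switch to $A_2$'' is likewise inverted. It becomes unnecessary once the branch of the maximum is used to control $r_t/\sigma$.
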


\subsection{Consequences for generative algorithms}
\label{sec:consequences}
Suppose Assumption~\ref{hyp:diffusion} is verified and let $X_1, \dots, X_n$ be i.i.d. random variables drawn from $\mu$. As explained in the introduction, one way to evaluate the quality of our estimator is to use it in a SGM and compare the measure the SGM samples from with the target measure in terms of $1$-Wasserstein distance $W_1$. Let us recall that for two probability measures $\mu$ and $\nu$, the $L^1$ Wasserstein distance is defined by:
\[    
W_1\big(\mu, \nu \big) = \inf_{\pi\in C(\mu,\nu)} \int_{\R^D\times \R^D}\|x-y\| d \pi(dx,dy) =   \sup_{ \|f\|_{\mbox{\scriptsize{Lip}}}\leq 1} \Big\{ \int_{\R^D} f \ d\mu - \int_{\R^D}  f\ d\nu\Big\},
\]
where $C(\mu,\nu)$ is the set of probability measures on $\R^D\times \R^D$ with marginal distributions $\mu$ and $\nu$, and where the second equality is the dual formulation, see \cite[Remark 6.5]{villani}.\medbreak

Let $T = \frac{s+1}{2s+d} \log(n)$.
Let us now precise the score estimator we will use. Let $\window > 0$  and $\ell > 0$. Let $C,R$ be positive constants and consider the following score estimator, for $t>0$, 
\begin{equation}\label{eq:tilde-score}
\tilde{\score}_t = \begin{cases}
\hat{\score}_t & \text{ with $\mathcal{F} = \mathcal{F}_{R}$ if $1-e^{-2t} \geq \frac{C}{\log{n}}$} \\
c(\hat{\score}_t) & \text{ with $\mathcal{F} = \mathcal{F}_{t, s, R, \window, \ell}$ if $1-e^{-2t} < \frac{C}{\log(n)}$},
\end{cases},
\end{equation}
where $\hat{\score}_t$ is defined in \eqref{eq:hat-score}, $\mathcal{F}_R$ in Remark \ref{rem:large-times} and $c$ is a clipping function defined by 
\[
\forall y \in \R^D, c(y) = \begin{cases}
y \text{ if $\|y\|\leq  \sqrt{\frac{4 \log(n)}{1-e^{-2t}}}$} \\
\frac{y}{\|y\|} \sqrt{\frac{4 \log(n)}{1-e^{-2t}}} \text{ otherwise}
\end{cases}.
\]
\begin{remark}
The clipping part is introduced in order to plug our estimator into Lemma D.7 \cite{oko} which turns bounds on score function estimation into bounds on $W_1(\hat{\mu}^{SGM}_n, \mu)$.
\end{remark}

\begin{remark}
We mostly focused on the behavior of the score function around the manifold in our analysis, which is mostly relevant for small times. In order to obtain optimal rates for the SGM, we thus need to use different estimators for  ``large" times (that is when $1-e^{-2t} \geq \frac{C}{\log{n}}$) and ``small" times. Such a distinction is likely to be unnecessary in practice. 
\end{remark}

For this score estimator, we can consider the stochastic process $( \widehat{\overleftarrow{X}}_t )_{t \in [0,T]}$ defined by $\widehat{\overleftarrow{X}}_0 \sim \gamma$
\begin{equation}\label{def:diffusion-munSGM}
d  \widehat{\overleftarrow{X}}_t = \big( \widehat{\overleftarrow{X}}_t + 2 \tilde{\score}_{T-t}( \widehat{\overleftarrow{X}}_t)\big) dt + \sqrt{2} dW_t,
\end{equation}
where $(\tilde{\score}_{t})_{t > 0}$ is an estimator of $(\score^\star_{t})_{t > 0}$.
We denote by $\hat{\mu}^{SGM}_n$ the distribution of $\widehat{\overleftarrow{X}}_{T}$.\medbreak

The following result shows that this framework is sufficient to obtain the following rate of convergence for the algorithm. 

\begin{corollary}
\label{cor:finalresult}
If $d \geq 3$ and $2 s (s+1) > d$. Then there exists $\ell_0, R_0, C_0 > 0$ such that taking $C \geq C_0, R \geq R_0, \ell \geq \ell_0$ and $\window = n^{\frac{-1}{2s+d}}$ in the previous framework yields  
\[
\E[W_1(\hat{\mu}^{SGM}_n, \mu)] \leq C_1 \log(n)^{3/2} n^{-\frac{s+1}{2s+d}},
\]
for some $C_1 >0$ depending only on $\mu$ and $D$.
\end{corollary}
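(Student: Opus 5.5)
The plan is to invoke Lemma D.7 of \cite{oko}, which bounds $W_1(\hat{\mu}^{SGM}_n,\mu)$ by four contributions: an initialization error of order $e^{-T}$; a small-time truncation error of order $\sqrt{t_0}$, coming from stopping the backward process at time $T-t_0$; and, for each time $t \in [t_0,T]$, a weighted integral of the $L^2(\mu_t)$ score errors, with weight roughly $\sqrt{1-e^{-2t}}$. The estimator must be clipped at level $\sqrt{\log(n)/(1-e^{-2t})}$ for that lemma to apply, and this is exactly what $c$ does. Since clipping toward a set that contains $\score^\star_t$ with overwhelming $\mu_t$-probability only changes the $L^2(\mu_t)$ error by a term of order $n^{-1}$, the bounds of Theorem~\ref{thm:maindiffusion} transfer to $c(\hat\score_t)$. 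The choice $T=\frac{s+1}{2s+d}\log n$ makes the initialization error $n^{-\frac{s+1}{2s+d}}$. Taking $t_0 = n^{-2(s+1)/(2s+d)}$ (up to logs) makes the truncation error of the same order. It then remains to show
\[
\int_{t_0}^{T} \sqrt{1-e^{-2t}}\, \E\big[\|\tilde\score_t-\score^\star_t\|_{L^2(\mu_t)}^2\big]^{1/2} dt \lesssim \log(n)^{3/2} n^{-\frac{s+1}{2s+d}}.
\]
I use the square-root form of \cite{oko}'s lemma; if their lemma instead needs a squared sum over a dyadic time grid, the same splitting works after Cauchy--Schwarz over the $O(\log n)$ dyadic blocks, which is where the extra $\log$ factors come from.

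I split the time integral into three regimes.

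\textbf{Large times}, $1-e^{-2t}\ge C/\log n$. Remark~\ref{rem:large-times} with $s=-1$ and Lemma~\ref{lem:simple-bias} give a bias of order $e^{-2t}/(1-e^{-2t})^2$. The variance is a parametric-type term for a $W^{1,\infty}$ class in dimension $d$, using the smoothing of $P_t$ with $m$ large. Together these give an $L^2$ error of $\mathrm{polylog}(n)\, n^{-1/2+o(1)}$, and the factor $e^{-2t}$ handles the bias. I need this part to be at most $n^{-(s+1)/(2s+d)}$, which holds because $(s+1)/(2s+d)<1/2$ for $d\ge3$.

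\textbf{Intermediate times}, $h^2 \le 1-e^{-2t} < C/\log n$. I use Theorem~\ref{thm:maindiffusion} with $r_\star$ from \eqref{eq:r2} and a large $m$. Write $\sigma^2=1-e^{-2t}$. The error is $\sigma^{-2}(h^{2(s+1)}+r_\star^2+n^{-1})$. The first term of \eqref{eq:r2} behaves like $n^{-1}\sigma^{-d}$ up to $n^{o(1)}$ as $m\to\infty$, so the weighted integrand is about $n^{-1/2}\sigma^{-d/2}$. Integrating with $dt \approx \sigma\, d\sigma$ down to $\sigma=h=n^{-1/(2s+d)}$ gives $n^{-1/2}h^{2-d/2}=n^{-(s+1)/(2s+d)}$. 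The support-estimation term, the second entry of the maximum, has to be checked against the same target; the hypothesis $2s(s+1)>d$ should be what makes it subdominant. The bias term contributes $\int h^{s+1}\,dt \lesssim h^{s+1}\log n$.

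\textbf{Small times}, $t_0\le 1-e^{-2t}<h^2$. I use \eqref{eq:r1} with $\ell\ge\ell_0$ large. Its first term, with $\sigma<h$, is bounded by the same $n^{-1}h^{-d}$ (up to $n^{o(1)}$ as $\ell$ grows), so integrating $n^{-1/2}h^{-d/2}\,dt$ over an interval of length $\lesssim h^2$ again gives the target. The second term is handled as above.

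The main obstacle is the exponent bookkeeping. Showing that both entries of each maximum in \eqref{eq:r1} and \eqref{eq:r2}, after multiplication by $\sigma^{-2}$, weighting, and integration, stay below $n^{-(s+1)/(2s+d)}$ needs $\ell,m$ large enough to absorb the $n^{o(1)}$ losses. The support term is the delicate one, and there I expect the condition $2s(s+1)>d$ to be exactly what is needed. Finally, the constants $C,R,\ell$ are fixed first, so that Proposition~\ref{pro:final-bias} and Theorem~\ref{thm:maindiffusion} apply uniformly in $t$.
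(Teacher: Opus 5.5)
Your overall architecture matches the paper's. You use Lemma D.7 of \cite{oko} with $T=\frac{s+1}{2s+d}\log n$ and $t_0\asymp n^{-2(s+1)/(2s+d)}$, and the clipping costs only a negligible tail term. You use \eqref{eq:r1} for $1-e^{-2t}\lesssim h^2$, \eqref{eq:r2} with large $m$ above that, and Remark~\ref{rem:large-times} once $1-e^{-2t}\ge C/\log n$.

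\textbf{The large-time bias step fails.} You use Lemma~\ref{lem:simple-bias} to compare $\score^\star_t$ with the Gaussian score $-x/(1-e^{-2t})$, which gives a squared bias of order $e^{-2t}/(1-e^{-2t})^2$. This is $\gtrsim 1$ for $t\in[C/\log n,1]$, and even $\asymp\log^2 n$ at the lower end. It only drops below $n^{-2(s+1)/(2s+d)}$ once $t\gtrsim T$, so the factor $e^{-2t}$ does not handle it. The dyadic block with $t_k\asymp 1$ alone contributes $\sqrt{t_k\log n\int_{t_{k-1}}^{t_k}e^{-2t}(1-e^{-2t})^{-2}\,dt}\asymp\sqrt{\log n}$ to the $W_1$ bound, and an order-one amount in your own weighted form. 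The paper's large-time bound has no bias term at all. The class $\mathcal{F}_R$ is a $W^{1,\infty}$-ball of radius $R/(1-e^{-2t})$ centred at the Gaussian score, so that $\score^\star_t$ itself belongs to it; Lemma~\ref{lem:simple-bias} controls $\score^\star_t+x/(1-e^{-2t})$. Hence $\score^{\mathcal{F}}_t=\score^\star_t$, and only the variance $\frac{C}{(1-e^{-2t})^2}(r_\star^2+n^{-1})$ remains. Here $r_\star^2$ comes from \eqref{eq:r2} at $s=-1$, with $m$ chosen so that $2m>d$ and $\frac{m}{2m+d}>\frac{s+1}{2s+d}$.

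\textbf{Your bookkeeping in the other regimes does not establish the target.} Your weighted form $\int\sqrt{1-e^{-2t}}\,\E[\|\tilde\score_t-\score^\star_t\|^2]^{1/2}dt$ is not what Lemma D.7 gives. The paper uses $\sum_k\sqrt{t_k\log n\int_{t_{k-1}}^{t_k}\E[\cdots]\,dt}$, which on a block with roughly constant error is larger than your integral by a factor $t_k^{-1/2}$. Also, the prefactor in Theorem~\ref{thm:maindiffusion} is $(1-e^{-2t})^{-2}=\sigma^{-4}$, not $\sigma^{-2}$. Your displayed identity is also false: $n^{-1/2}h^{2-d/2}=n^{-(s+2)/(2s+d)}$, not $n^{-(s+1)/(2s+d)}$. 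These errors happen to point in the favourable direction, but they do not prove the claim.

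The correct criterion is per block. Since $t_k\int_{t_{k-1}}^{t_k}(1-e^{-2t})^{-2}dt=O(1)$ for $t_k\lesssim1$, you need $r_\star^2(t_k)\lesssim n^{-2(s+1)/(2s+d)}$. The first entry of \eqref{eq:r1} is increasing in $t$, and the first entry of \eqref{eq:r2} is decreasing in $t$; the latter needs $d(1+m)>2(s+2+m)$, which is possible only because $d\ge 3$. Both equal exactly $n^{-2(s+1)/(2s+d)}$ at $t\asymp h^2$. The support-estimation entries still require the explicit computation using $2s(s+1)>d$ with $\ell,m$ large, which you leave undone.

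\textbf{Minor omission.} $\hat\mu^{SGM}_n$ is the law at time $T$, not $T-t_0$. You therefore also need $W_1(\hat\mu^{SGM}_n,(\hat\mu^{SGM}_n)_{t_0})\lesssim\sqrt{t_0\log n}$. The paper gets this from It\^o's formula and the clipping bound $\|\tilde\score_t\|_\infty\le\sqrt{4\log n/(1-e^{-2t})}$.
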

As noted in the introduction, this convergence rate is optimal (up to the logarithmic factor) following the work of   \cite{Divol2022}. The additional condition $2 s (s+1) > d$ is due to the support estimation part of the measure estimation problem as discussed in Remark~\ref{rem:supportvsmeasure}. In practice, should this condition not be verified, we would obtain a slightly slower rate of convergence for the measure estimator. Remark that support estimation is easier than measure estimation \cite{aamarilevrard2019, Divol2022}. Thus either our proof techniques are suboptimal with respect to this specific issue or one should apply additional modifications to the score estimation problem to better deal with this problem. 

\section{Proofs}
\label{sec:proofs}

In our analysis, we will often rely on smoothing properties of the semi-group $(P_t)_{t \geq 0}$, summarized in the following result. 
\begin{lemma}
\label{lem:integrationByParts}
Let $t >0$. For any test function $\test$ and any $k > 0$,
\[
\nabla^k P_t \test = e^{-kt} P_t \nabla^k \test
\]
and, for any integer $j \in \{1, \dots, D\}^k$,
\[
(\nabla^k P_t)_{j}^2 \leq \frac{k! \ e^{-2kt}}{(1-e^{-2t})^k} P_t \test^2.
\]
\end{lemma}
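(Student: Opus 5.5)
The plan is to use the explicit Gaussian representation of the Ornstein--Uhlenbeck semigroup coming from \eqref{eq:OUconvo}:
\[
P_t \test(x) = \E\big[\test(e^{-t}x + \sqrt{1-e^{-2t}}\,Z)\big], \qquad Z \sim \mathcal{N}(0, I_D).
\]
Both claims then reduce to elementary manipulations of this Gaussian expectation. The first identity comes from differentiating inside the expectation, and the second from moving the derivatives onto the Gaussian density and applying Cauchy--Schwarz.

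\textbf{First identity.} For $\test$ smooth with derivatives of controlled growth, I differentiate under the expectation by dominated convergence. Each derivative in $x$ produces a factor $e^{-t}$ by the chain rule. Iterating $k$ times gives
\[
\nabla^k P_t\test(x) = e^{-kt}\, \E\big[(\nabla^k\test)(e^{-t}x+\sqrt{1-e^{-2t}}Z)\big] = e^{-kt} P_t \nabla^k \test(x).
\]

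\textbf{Second bound.} Here I integrate by parts in the Gaussian variable instead. Set $\sigma=\sqrt{1-e^{-2t}}$ and write
\[
P_t\test(x)=\int \test(y)\,\gamma_\sigma(y-e^{-t}x)\,dy,
\]
with $\gamma_\sigma$ the centered Gaussian density of covariance $\sigma^2 I_D$. Differentiating the kernel $k$ times in $x$ gives
\[
\partial_{j_1}\cdots\partial_{j_k} P_t\test(x) = (-e^{-t})^k \int \test(y)\,(\partial_{j_1}\cdots\partial_{j_k}\gamma_\sigma)(y-e^{-t}x)\,dy .
\]
The Rodrigues formula gives $\partial_{j_1}\cdots\partial_{j_k}\gamma_\sigma(z) = (-1)^k\sigma^{-k} H_{j}(z/\sigma)\gamma_\sigma(z)$, where $H_j$ is the multivariate Hermite polynomial associated with the multi-index $j$. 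Hence
\[
(\nabla^k P_t\test)_j(x) = \frac{e^{-kt}}{\sigma^k}\,\E\big[\test(e^{-t}x+\sigma Z)\,H_j(Z)\big].
\]
Cauchy--Schwarz then yields
\[
(\nabla^kP_t\test)_j^2 \le \frac{e^{-2kt}}{\sigma^{2k}}\, \E[H_j(Z)^2]\, P_t\test^2(x).
\]

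\textbf{Hermite norm.} It remains to check that $\E[H_j(Z)^2]\le k!$. The index $j$ has repetition counts $\alpha_1,\dots,\alpha_D$ with $\sum_i\alpha_i=k$, and $H_j$ factorizes as $\prod_i He_{\alpha_i}(Z_i)$ in terms of the one-dimensional probabilists' Hermite polynomials. Since $\E[He_a(Z_1)^2]=a!$, independence of the coordinates gives
\[
\E[H_j(Z)^2]=\prod_i \alpha_i! \le k! .
\]

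The main care needed is in justifying the differentiation under the integral, and the integration by parts, for the class of test functions in use. I would state the lemma for $\test$ such that $P_t\test^2$ is finite (for instance polynomially growing measurable $\test$). The Gaussian kernel representation then requires no differentiability of $\test$ for the second bound, and only growth control of $\nabla^k\test$ for the first identity. Everything else is routine bookkeeping.
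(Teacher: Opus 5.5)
Your proposal is correct and follows essentially the paper's argument. Both write $P_t\test$ as a Gaussian expectation, differentiate under the integral to get the commutation identity, move the derivatives onto the Gaussian to produce Hermite polynomials, and then apply Cauchy--Schwarz with $\E[H_j(Z)^2]\le k!$. The differences are minor: you differentiate the shifted Gaussian kernel directly (Rodrigues), so the bound needs no smoothness of $\test$, whereas the paper integrates $\nabla^k\test$ by parts in $z$; and you verify $\E[H_j(Z)^2]=\prod_i\alpha_i!\le k!$, which the paper only asserts.
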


\begin{proof}
Let $x \in \mathbb{R}^D$. We have
\[
P_t \test (x) = \int_{\R^D} \test(e^{-t} x  + \sqrt{1-e^{-2t}}z) \gamma(z) dz,
\]
Therefore, for any integer $k \geq 1$, 
\begin{align*}
\nabla^k P_t \test (x) & =  \int_{\R^D} \nabla^k \left(\test(e^{-t} x  + \sqrt{1-e^{-2t}}z)\right) \gamma(z) dz \\
& = e^{-kt} \int \nabla^k \test(e^{-t} x  + \sqrt{1-e^{-2t}}z) \gamma(z) dz.
\end{align*}
Furthermore, for any $k \in \mathbb{N}$, successive integration by parts with respect to $\gamma$ yields 
\begin{align*}
\nabla^k P_t \test(x) & = e^{-kt} \int_{\R^D} \nabla^k \test(e^{-t} x  + \sqrt{1-e^{-2t}}z) \gamma(z) dz \\
& = \frac{e^{-kt}}{(1-e^{-2t})^{k/2}} \int_{\R^D} H_k(z) \test(e^{-t} x  + \sqrt{1-e^{-2t}}z) \gamma(z) dz,
\end{align*}
where $H_k$ is the $k$-th Hermite polynomial tensor. Finally, using Cauchy-Schwarz's inequality yields, for any $j \in \{1, \dots, D\}^k$,
\[
(\nabla^k P_t)_{j}^2   \leq  \frac{e^{-2kt}}{(1-e^{-2t})^k}  \int_{\R^D} (H_k(z))_j^2 \gamma(z) dz \int_{\R^D} \test(e^{-t} x  + \sqrt{1-e^{-2t}}z)^2 \gamma(z) dz,
\]
We finally conclude the proof by remarking that $\left\| \int_{\R^D} H_k(z)^2 \gamma(z) dz \right\|_\infty \leq k!$.
\end{proof}

\subsection{Proof of Lemma~\ref{lem:bias-variance}}
We have, for any $g \in W^{1,2}(\mu_t)$,
\[
\|g - \score^\star_t\|^2_{L^2(\mu_t)}  = L_t(g) - L_t(\score^\star_t).
\]
See \cite{Vincent2011}. Applying this result for $g=\hat{\score}_t$ and $g=\score_t^\mathcal{F}$, we obtain
\begin{align*}
\|\hat{\score}_t - \score^\star_t\|^2_{L^2(\mu_t)} & = L_t(\hat{\score}_t) - L_t(\score^\star_t) \\
& = L_t(\hat{\score}_t) - L_t(\score_t^\mathcal{F}) + L_t(\score_t^\mathcal{F}) - L_t(\score^\star_t) \\
& = \|\score_t^\mathcal{F} - \score^\star_t\|^2_{L^2(\mu_t)}-(L_t(\score_t^\mathcal{F}) - L_t(\hat{\score}_t))  .
\end{align*}
Finally, since $\hat{\score}_t$ minimizes $\widehat{L}_t$ over $\mathcal{F}$ we also have 
\[
\widehat{L}_t(\score^\mathcal{F}_t) - \widehat{L}_t(\hat{\score}_t) \geq 0,
\]
so that
\[
\|\hat{\score}_t - \score^\star_t\|^2_{L^2(\mu_t)} \leq \|\score^\mathcal{F}_t - \score^\star_t\|^2_{L^2(\mu_t)} + (\widehat{L}_t(\score^\mathcal{F}_t) - \widehat{L}_t(\hat{\score}_t)) -  (L_t(\score^\mathcal{F}_t) - L_t(\hat{\score}_t)). 
\]

\subsection{Proof of Lemma~\ref{lem:parallel-regularity}}

Let $t < 1, x_0 \in \mathcal{M}_t^{\tau_{\min}/2}$. In the following, we denote by $C$ a generic constant which is independent of $t$ and of $x_0$. We proceed in two steps: we first bound $f^\|_t(x_0)$ from below before bounding $\|\nabla^k f(x_0)\|$ from above for all $k \leq s$. The upper bound on the derivatives of the score function will then follow. Let $x \in \mathcal{M}_t^{\tau_{\min}/2}, y \in \mathcal{M}_t$ and  
\[
K_t(x,y) = e^{-\frac{\|p_t(x)-y\|^2 + 2 \left<x - p_t(x) ,p_t(x) - y \right>}{2(1-e^{-2t})}}.
\]
Let $B_r = \{y \in \mathcal{M} \mid \|y - p_t(x_0)\| \leq e^{-t} r\}$, where $r$ is defined in our assumptions on $\mathcal{M}$, and let $\bar{B_r} = \mathcal{M}_t \setminus B_r$. We have
\[
f^\|_t(x) = \frac{1}{\left(2\pi (1-e^{-2t})\right)^{d/2}} \left(\int_{B_r}+\int_{\bar{B}_r} \right) K_t(x,y) d\nu_t(y).
\]
By Assumption~\ref{hyp:diffusion}, the orthogonal projection $\pi:B_r \rightarrow T_{p_t(x_0)} \mathcal{M}$ admits a $k$-times differentiable inverse $\Psi: T_{p_t(x_0)} \mathcal{M}\rightarrow B_r$ with bounded derivatives. Therefore, a change of variable $y = \Psi(z)$ gives
\[
\int_{B_r} K_t(x,y) d\nu_t(y) = \int_{\pi(B_r)} K_t(x,\Psi(z)) f^{\nu_t}(\Psi(z)) \|\det(D\Psi(z))\| dz,
\]
where $\det(D\Psi(z))$ is the determinant of the Jacobian of $\Psi$ and $f^{\nu_t}$ is the density of $\nu_t$ with respect to the volume measure of $\mathcal{M}_t$. Remark that, as $\mathcal{M}_t$ is a $d$-dimensional submanifold of $\R^D$, we can identify $T_{p_t(x_0)} \mathcal{M}$ to $\R^d$ and, since $\Psi$ is the inverse of the orthogonal projection $\pi$ on  $T_{p_t(x_0)} \mathcal{M}$, there exists $\psi$ such that 
\[
\forall z \in T_{p_t(x_0)} \mathcal{M}, \Psi(z) = (z, \psi(z))
\]
so that
\[
\int_{B_r} K_t(x,y) d\nu_t(y) = \int_{\pi(B_r)} K_t(x,(z, \psi(z))) f^{\nu_t}(z,\psi(z)) \sqrt{1+\|\nabla \psi(z)\|^2} dz.
\]
In particular, letting $\tilde{f}(z) = f^{\nu_t}(z,\psi(z)) \sqrt{1+\|\nabla \psi(z)\|^2}$,
\begin{equation}
\label{eq:changeofvariable}
\int_{B_r} K_t(x,y) d\nu_t(y) = \int_{\pi(B_r)} K_t(x,(z, \psi(z))) \tilde{f}(z) dz.
\end{equation}

\noindent \textbf{Step 1:} Let us first show $f^\|_t$ is bounded from below. By Lemma~4.8 \cite{Federer59}, $x_0-p_t(x_0)$ is orthogonal to $T_{p_t(x_0)} \mathcal{M}$. Hence, by Theorem~4.18 \cite{Federer59} and since $x_0 \in \mathcal{M}_t^{\tau_{\min}/2}$, 
\begin{equation}
\label{eq:projection}
\left<x_0-p_t(x_0), p_t(x_0) -y\right> \leq \|x_0-p_t(x_0)\| \frac{\|p_t(x_0) - y\|^2}{2 \tau_{\min}} \leq \frac{\|p_t(x_0) - y\|^2}{4}.
\end{equation}
Thus
\[
K_t(x_0,y) \geq e^{-\frac{3 \|p_t(x_0)-y\|^2}{4(1-e^{-2t})}}
\]
or, by decomposing $p_t(x_0)$ and $y$ on $T_{p_t(x_0)} \mathcal{M}$ and its orthogonal complement,
\[
K_t(x, \Psi(z)) \geq e^{-\frac{3}{4(1-e^{-2t})}\left(\|\pi(p_t(x_0))-z\|^2 + \|\psi(\pi(p_t(x_0))) - \psi(z)\|^2\right)}
\]
so that 
\[
f^\|_t(x_0) \geq \frac{\min_{w \in \pi(B_r)}\left(\tilde{f}(w)\right)} {\left(2\pi (1-e^{-2t})\right)^{d/2}}  \int_{\pi(B_r)} e^{-\frac{3}{4(1-e^{-2t})}\left(\|\pi(p_t(x_0))-z\|^2 + \|\psi(\pi(p_t(x_0))) - \psi(z)\|^2\right)}  dz.
\]
Using Theorem~4.18 \cite{Federer59} once more, we obtain 
\[
\|\psi(\pi(p_t(x_0))) - \psi(z)\| \leq \frac{\|\pi(p_t(x_0))-z \|^2}{2 \tau_{\min}}
\]
so that 
\[
f^\|_t(x_0) \geq \frac{1}{\left(2\pi (1-e^{-2t})\right)^{d/2}}  \int_{\pi(B_r)} e^{-\frac{3\|\pi(p_t(x_0))-z\|^2}{4(1-e^{-2t})} \left(1+ \frac{\|\pi(p_t(x_0))-z\|^2}{2 \tau_{\min}}\right)} \min\left(\tilde{f}\right)  dz.
\]
Finally, by Proposition 8.7 \cite{aamarilevrard2018}, $\pi(B_r) \subset B\left(p_t(x_0), \frac{7r}{8}\right)$ so, after translating by $\pi(p_t(x_0))$,
\[
f^\|_t(x_0) \geq \frac{1}{\left(2\pi (1-e^{-2t})\right)^{d/2}}  \int_{\|z\| \leq \frac{7r}{8}} e^{-\frac{3\|z\|^2}{4(1-e^{-2t})}\left(1 + \frac{\|z\|^2}{2 \tau_{\min}}\right)} \min\left(\tilde{f}\right)  dz.
\]
When $t$ converges to $0$, the right-hand term converges to a non-zero constant. Therefore, 
\begin{equation}
\label{eq:lowerbounddensity}
f^\|_t(x_0) \geq C.
\end{equation}

\noindent \textbf{Step 2:} Let us now show that $\|\nabla^i f^\|_t(x_0)\|$ is bounded from above. As before, we have 
\[
\nabla^i f^\|_t(x) =  \frac{1}{\left(2\pi (1-e^{-2t})\right)^{d/2}} \left(\int_{B_r}+\int_{\bar{B}_r} \right) \nabla^i_x K_t(x,y) d\nu_t(y).
\]
Let us start by dealing with the integral over $\bar{B}_r$. Since $p_t$ is $k-1$ times differentiable and $i \leq k-1$,
\[
\|\nabla^{i}_x K_t(x_0,y)\| \leq \frac{C}{(1-e^{-2t})^i} K_t(x_0,y).
\]
Since, by Equation~(\ref{eq:projection}), we have 
\[
K_t(x_0,y) \leq e^{-\frac{\|p_t(x_0) - y\|^{2}}{4(1-e^{-2t})}},
\]
we obtain 
\[
\|\nabla^{i}_x K_t(x_0,y)\| \leq \frac{C}{(1-e^{-2t})^i} e^{-\frac{\|p_t(x_0) - y\|^{2}}{4(1-e^{-2t})}}
\]
and thus 
\begin{equation}
\label{eq:barBr}
\frac{1}{\left(2\pi (1-e^{-2t})\right)^{d/2}} \int_{\bar{B}_r} \nabla^i_x K_t(x_0,y) d\nu_t(y) \leq \frac{C}{\left(2\pi (1-e^{-2t})\right)^{d/2+i}} e^{-\frac{e^{-2t} r^{2}}{4(1-e^{-2t})}}
\end{equation}
which is uniformly bounded for all $t < 1$. We are thus left with dealing with the integral part over $B_r$. Recall that Equation~\eqref{eq:changeofvariable} gives
\[
\int_{B_r} K_t(x,y) d\nu_t(y) = \int_{\pi(B_r)} K_t(x,(z, \psi(z))) \tilde{f}(z) dz.
\]
Let $x \in \R^D$ be sufficiently close to $x_0$ so that $p_t(x)$ belongs to $B_r$. Letting $w = \pi(p_t(x))$, we have 
\[
K_t(x,(z,\psi(z))) = 
e^{-\frac{\|w-z\|^2 + \|\psi(w)-\psi(z)\|^2 + 2 \left<x - p_t(x), (w-z, \psi(w) - \psi(z)) \right>}{2(1-e^{-2t})}}.
\]
Then, using Taylor's theorem then yields 
\[
\psi(w) - \psi(z) = \nabla \psi(w) (w - z) 
- \Phi(w,z) \frac{(w - z)^{\otimes 2}}{2},
\]
where $(w - z)^{\otimes 2}$ is the matrix $(w - z)^T (w - z)$ and 
\[
\Phi : (w,z) \rightarrow \left(\int_{0}^1 (1-u) \nabla^2 \psi\big(w + u(z-w)\big) du \right),
\]
Remark that, by definition of $\psi$, $(w-z, \nabla\psi(w)(w-z)) \in T_{p_t(x)}\mathcal{M}$. Thus, as $x - p_t(x)$ is orthogonal to $T_{p_t(x)}\mathcal{M}$, 
\[
\left<x - p_t(x), (w - z, \nabla \psi(w) (w - z)) \right> = 0
\]
and
\[
2 \left<x - p_t(x) ,(w - z, \psi(w) - \psi(z)) \right> = \\
-\left<(x - p_t(x))^\perp, \Phi(w,z) (w - z)^{\otimes 2} \right>,
\]
where $(x-p_t(x))^\perp$ denotes the projection of $x-p_t(x)$ on the orthogonal complement of $T_{p_t(x_0)} \mathcal{M}$. Hence,
\[
K_t(x,(z,\psi(z))) = 
e^{-\frac{\|w-z\|^2 + \|\psi(w)-\psi(z)\|^2 -\left<(x - p_t(x))^\perp, \Phi(w,z) (w - z)^{\otimes 2} \right>}{2(1-e^{-2t})}}.
\]
Let $(e_1, \dots, e_d)$ be an orthonormal basis of $T_{p_t(x_0)}\mathcal{M}$ and $(e_{d+1}, \dots, e_D)$ be such that $(e_1, \dots, e_D)$ is an orthonormal basis of $\R^D$. Since $p_t$ is the orthogonal projection on $\mathcal{M}_t$,
\[
\forall 1 \leq l \leq d, \quad \frac{\partial \pi(p_t(x_0))}{\partial e_l} = 1,
\]
and since $\pi \circ p_t$ is $k-1$ times differentiable, we can use the inverse function theorem to find a $k-1$ differentiable function $u:T_{p_t(x_0)} \times \R^{D-d} \rightarrow \R^D$ such that, for any $x$ sufficiently close to $x_0$, 
\[
x = u(w, x^\perp),
\]
where $x^\perp$ is the projection of $x$ on the linear subspace spanned by $(e_{d+1}, \dots, e_D)$. Since this function $u$ is $k-1$ differentiable, it is sufficient to show that the derivatives of 
\[
J_t(w, x^\perp) = \frac{1}{\left(2\pi (1-e^{-2t})\right)^{d/2}} \int_{\pi(B_r)} e^{-\frac{\|w-z\|^2 + \|\psi(w)-\psi(z)\|^2 -\left<(x - p_t(x))^\perp, \Phi(w,z) (w - z)^{\otimes 2} \right>}{2(1-e^{-2t})}} \tilde{f}(z) dz.
\]
with respect to $w$ and to $x^\perp$ are bounded to conclude the proof. Let $i \leq s+2, j \in\{0,1,2\},j^w \leq s$ and $j^\perp$ such that $j + j^w + j^\perp = i$. If we can show that the quantities $\left\|\nabla^j \nabla_w^{j^w} \nabla_{x^\perp}^{j^\perp}  J_t(w, x^\perp)\right\|$ are bounded for all $(j,j^w, j^\perp)$, then we will have shown that $\|\nabla^i J_t(w,x^\perp)\|$ are bounded as well. First, we have 
\begin{multline*}
\nabla_{x^\perp}^{j^\perp} J_t(w, x^\perp) = \\
\frac{1}{\left(2\pi (1-e^{-2t})\right)^{d/2}} \int_{\pi(B_r)} e^{-\frac{\|w-z\|^2 + \|\psi(w)-\psi(z)\|^2 + \left<(x^\perp - \psi(w)) \Phi(w,z), (w - z)^{\otimes 2} \right>}{2(1-e^{-2t})}} \Phi(w,z)^{\otimes j^\perp} \tilde{f}(z) dz.
\end{multline*}
Now, letting $\Psi_w(w,z) = (x^\perp - \psi(w)) \Phi(w,z)$, we have 
\begin{multline*}
\nabla_{x^\perp}^{j^\perp} J_t(w, x^\perp) = \\
\frac{1}{\left(2\pi (1-e^{-2t})\right)^{d/2}} \int_{\pi(B_r)} e^{-\frac{\left<I_d + \Psi_w(w,z), (w-z)^{\otimes 2}\right> + \|\psi(w)-\psi(z)\|^2}{2(1-e^{-2t})}} \Phi(w,z)^{\otimes j^\perp} \tilde{f}(z) dz.
\end{multline*}
Letting 
\[
K_t'(w,z) = \frac{1}{\left(2\pi (1-e^{-2t})\right)^{d/2}}  e^{-\frac{\left<I_d + \Psi_w(w,z), (w-z)^{\otimes 2}\right> + \|\psi(w)-\psi(z)\|^2}{2(1-e^{-2t})}},
\]
we have 
\begin{multline*}
\nabla_w K_t'(w,z)  = -\nabla_z K_t'(w,z) 
+ K_t'(w,z)\left[\left(\nabla_w + \nabla_z \right)\Psi_w(w,z)\right] \frac{(w-z)^{\otimes 2}}{2(1-e^{-2t})} \\
+ K_t'(w,z) \left(\nabla_w + \nabla_z \right) \frac{\|\psi(w)-\psi(z)\|^2}{2(1-e^{-2t})}
\end{multline*}
or
\begin{multline*}
\nabla_w K_t'(w,z)  = -\nabla_z K_t'(w,z) 
+ K_t'(w,z)\left[\left(\nabla_w + \nabla_z \right)\Psi_w(w,z)\right] \frac{(w-z)^{\otimes 2}}{2(1-e^{-2t})} \\
+ K_t'(w,z) \frac{(\nabla \psi(w) - \nabla \psi(z)) \otimes (\psi(w)-\psi(z))}{1-e^{-2t}}.
\end{multline*}
Therefore, applying Green's theorem  yields 
\begin{multline*}
\nabla_w \nabla_{x^\perp}^{j^\perp} J_t(w, x^\perp) = \int_{\pi(B_r)} K_t'(w,z) \nabla_z \left[\Phi(w,z)^{\otimes j^\perp} \tilde{f}(z)\right] dz \\
+ \int_{\pi(B_r)} \frac{K_t'(w,z)}{2 (1-e^{-2t})} (\nabla \psi(w) - \nabla \psi(z)) \otimes (\psi(w)-\psi(z))  \tilde{f}(z) dz \\
+ \int_{\pi(B_r)} \frac{K_t'(w,z)}{2 (1-e^{-2t})} \left[\left(\nabla_w + \nabla_z \right)\Psi_w(w,z)\right] \tilde{f}(z) dz \\
- \int_{\partial \pi(B_r)} K_t'(w,z) \Phi(w,z)^{\otimes j^\perp} \tilde{f}(z) n(z) dS(z),
\end{multline*}
where $dS(z)$ and $n(z)$ are respectively the surface measure and the outward normal of $\partial \pi(B_r)$. Let us note that, similarly to how we derived Equation~(\ref{eq:barBr}), the derivatives of 
\[
\int_{\partial \pi(B_r)} K_t'(w,z) \Phi(w,z)^{\otimes j^\perp} \tilde{f}(z) n(z) dS(z)
\]
are bounded independently of $t < 1$. Therefore, by successive derivations and applications of Green's theorem, we obtain 
\[
\nabla_w^{j^w}\nabla_{x^\perp}^{j^\perp}  J_t(w, x^\perp) = \int_{\pi(B_r)} K_t'(w,z) \Phi^{(1)}(w,z) dz + \Phi_t^{(2)}(w),
\]
where $\|\Phi^{(2)}\|$ is bounded independently of $t < 1$ and 
\[
\Phi^{(1)}(w,z) = \sum_{l = 1}^{K} \Phi^{(1,l)} \bigotimes_{m = 1}^{k_l} \frac{\phi^{(m,l)}(w) - \phi^{(m,l)}(z)}{\sqrt{1-e^{-2t}}},
\]
where $K, k_1, \dots, k_l$ are positive constants, the $\Phi^{(1,l)}$ are bound and the $\phi^{(m,l)}$ have bounded first derivatives. Furthermore
\[
\nabla^j \nabla_w^{j^w}\nabla_{x^\perp}^{j^\perp}  J_t(w, x^\perp) = \int_{\pi(B_r)} \nabla^j K_t'(w,z) \Phi^{(1)}(w,z) dz + \Phi_t^{(2)}(w).
\]
From here, using Taylor expansions, we obtain that
\[
\|\Phi^{(1)}(w,z)\| \leq C \sum_{l = 1}^{K} \frac{\|w-z\|^{k_l}}{(1-e^{-2t})^{k_l/2}}
\]
and thus
\[
\left\|\int_{\pi(B_r)}  \nabla^j K_t'(w,z) \Phi^1(w,z) \Phi^2(w,z,t) dz \right\| \leq C \int_{\R^d}  \sum_{l = 1}^{K} \frac{\|w-z\|^{k_l}}{(1-e^{-2t})^{k_l/2}} \left\| \nabla^j K^w_t(w,z) \right\|  dz.
\]
If $j=0$, after bounding $K^w_t(w,z)$ following the arguments used to derive Equation~\eqref{eq:barBr}, we obtain that the previous quantity is bounded independently of $t$. From which we conclude that $\|\nabla^i f^\|_t(x_0)\|_{W^{s,\infty}} \leq C$. Finally, remarking that
\[
\left\|\nabla_w K^w_t(w,z)\right\| \leq C \frac{\|w-z\|}{1-e^{-2t}} K_t'(w,z)
\]
and 
\[
\left\|\nabla^2_w K^w_t(w,z)\right\| \leq C \left(\frac{\|w-z\|^2}{(1-e^{-2t})^2} + \frac{1}{1-e^{-2t}} \right)K_t'(w,z).
\]
These inequalities respectively give
\[
\|\nabla^{s+1} f^\|_t(x_0)\| \leq \frac{C}{\sqrt{1-e^{-2t}}}
\]
and
\[
\|\nabla^{s+2} f^\|_t(x_0)\| \leq \frac{C}{1-e^{-2t}}.
\]
We can then conclude the proof by combining these bounds with Equation~\eqref{eq:lowerbounddensity}.

\subsection{Proof of Lemma~\ref{lem:cutscore}}
We define $\tilde{\score}_t$ by clipping the original score function. Let $c : \R^+ \rightarrow \R$ be a cutoff function $C^\infty$ such that $c(x) = 1$ for $x < \frac{1}{\sqrt{2}}$ and $c(x) = 0$ for $x > 1$ and let 
\[
\tilde{\score}_t : x \rightarrow \score_t^\star(x) c\left(\left(\frac{2e^t d(x, \mathcal{M}_t)}{\tau_{\min}} \right)^2 \right).
\]

Remark that $c$ is a $C^\infty$ and that for all $x \in \R^D$ such that $c\left(\left(\frac{2e^t d(x, \mathcal{M}_t)}{\tau_{\min}} \right)^2\right) > 0$, we have that $x \rightarrow d(x, \mathcal{M}_t)^2 = \|x - p_t(x)\|^2$ is well-defined and $k$-times differentiable. Therefore, the first claim follows from Lemmas~\ref{lem:density-decomposition} and \ref{lem:parallel-regularity}.

Finally, by Equation~(\ref{eq:conditionalscoredef}),
\begin{align*}
    \|\tilde{\score}_t - \score_t^\star\|_{L^2(\mu_t)}^2 & \leq \int_{\R^D}  \|\score_t^\star(x)\|^2 1_{4 d(x, \mathcal{M}_t) \geq e^{-t} \tau_{\min}} d \mu_t \\
     &\leq \frac{\E[\|\E[Z_t \mid \overrightarrow{X}_t] \|^2 1_{4 d(\overrightarrow{X}_t, \mathcal{M}_t) \geq e^{-t} \tau_{\min}} ] }{(1-e^{-2t})^2} \\
    & \leq \frac{\E[\|Z_t\|^2 1_{4 d(\overrightarrow{X}_t, \mathcal{M}_t) \geq e^{-t} \tau_{\min}} ] }{(1-e^{-2t})^2} \\
    & \leq \frac{\E[\|Z_t\|^2 1_{4 \|\overrightarrow{X}_t - e^{-t}X\| \geq e^{-t} \tau_{\min}} ] }{(1-e^{-2t})^2} \\
    & \leq \frac{\E[\|Z_t\|^2 1_{4 \sqrt{1-e^{-2t}} \|Z_t\| \geq e^{-t} \tau_{\min}} ] }{(1-e^{-2t})^2}.
\end{align*}
The second claim of the result thus follows from tail properties of the Gaussian measure.

\subsection{Proof of Lemma~\ref{lem:chaining}}

In the course of this proof, we denote by $C$ a generic constant independent of $t$ and $n$. For any $g \in \mathcal{F}$, let 
\[
\tilde{l}_{g,t} = \frac{(1-e^{-2t})^2}{8 R^2}\left( l_{\score_t^\mathcal{F},t} - l_{g,t} \right)
\]
so that
\begin{equation}
(\widehat{L}_t(\score^\mathcal{F}_t) - \widehat{L}_t(g)) -  (L_t(\score^\mathcal{F}_t) - L_t(g)) = \frac{8 R^2}{n (1-e^{-2t})^2} \sum_{i=1}^n \left(\tilde{l}_{g,t}(X_i) - \E[\tilde{l}_{g,t}(X_i)]\right). \label{etape16}
\end{equation}
\noindent \textbf{Step 1:} We first upper bound $\|\tilde{l}_{g,t}\|^2_{L^2(\mu)}$. By Lemma~\ref{lem:integrationByParts}, for any $g_1, g_2 \in \mathcal{F}$ and since $R \geq \sqrt{D}$,
\begin{equation}
|P_t \div (g_1 - g_2)| \leq \sum_{i=1}^D \sqrt{\frac{P_t \|g_1-g_2\|^2} {1-e^{-2t}}} \leq \sqrt{\frac{D P_t \|g_1 - g_2\|^2}{1-e^{-2t}}} \leq R\sqrt{\frac{ P_t \|g_1 - g_2\|^2}{1-e^{-2t}}}.\label{etape15}
\end{equation}
Thus, by our assumption, we can show a first preliminary estimate
\begin{align}
|\tilde{l}_{g,t}| & \leq  \frac{(1-e^{-2t})^2}{8 R^2} \left(2  P_t \div ( g - \score_t^\mathcal{F}) + P_t (\|g\|^2 -  \|\score_t^\mathcal{F}\|^2 )\right) \nonumber\\
& \leq  \frac{(1-e^{-2t})^2}{8 R^2} \left(2  P_t \div ( g - \score_t^\mathcal{F}) + P_t\left( \left<g + \score_t^\mathcal{F}, g - \score_t^\mathcal{F}  \right> \right)\right) \label{etape14} \\
& \leq  \frac{(1-e^{-2t})^2}{8 R^2} \left(\frac{2 R}{\sqrt{1-e^{-2t}}} + \sqrt{P_t \|g+\score_t^\mathcal{F}\|^2} \right) \sqrt{P_t \| g - \score_t^\mathcal{F}\|^2} \nonumber\\
& \leq  \frac{(1-e^{-2t})^2}{8 R^2} \left(\frac{2 R}{\sqrt{1-e^{-2t}}} + \frac{2R}{1-e^{-2t}} \right) \sqrt{P_t \| g - \score_t^\mathcal{F}\|^2} \nonumber\\
& \leq \frac{1-e^{-2t}}{2 R}\sqrt{P_t \| g - \score_t^\mathcal{F}\|^2} \nonumber\\
& \leq 1.\nonumber
\end{align}
As a consequence of Lemma~\ref{lem:simple-bias}, $\|P_t \|\score^\star_t\|^2\|_{L^\infty(\mathcal{M}_t)} \leq \frac{C}{1-e^{-2t}}$. Starting from \eqref{etape14} and using \eqref{etape15}, we have 
\begin{align*}
|\tilde{l}_{g,t}| 
    & \leq C (1-e^{-2t})^2 \left(\frac{1}{\sqrt{1-e^{-2t}}}  \sqrt{P_t \| g - \score_t^\mathcal{F}\|^2} + P_t\left( \left<g  + \score_t^\mathcal{F}, g - \score_t^\mathcal{F}  \right> \right)  \right) \\
& \leq C (1-e^{-2t})^2 \bigg(\frac{1}{\sqrt{1-e^{-2t}}}  \sqrt{P_t \| g - \score_t^\mathcal{F}\|^2} +  \\
& \hspace{6cm}  P_t\left( \left<g  - \score_t^\mathcal{F} + 2 (\score_t^\mathcal{F} - \score_t^\star) + 2 \score^\star_t, g - \score_t^\mathcal{F}  \right> \right)  \bigg) \\
&\leq C (1-e^{-2t})^2 \Bigg(\frac{1}{\sqrt{1-e^{-2t}}} + \sqrt{P_t \|g-\score_t^\mathcal{F}\|^2} +  \sqrt{P_t \|\score_t^\mathcal{F} - \score^\star_t\|^2} +  \\
& \hspace{7cm} \|\sqrt{P_t \|\score^\star_t\|^2}\|_{L^\infty(\mathcal{M})} \Bigg)\sqrt{P_t \|g-\score_t^\mathcal{F}\|^2}
\\
& \leq C (1-e^{-2t})^2 \left(\frac{1}{\sqrt{1-e^{-2t}}}  \sqrt{P_t \| g - \score_t^\mathcal{F}\|^2} +  P_t \|g-\score_t^\mathcal{F}\|^2 +   P_t \|\score_t^\mathcal{F}-\score_t^\star\|^2  \right) \\
& \leq  C (1-e^{-2t})^{3/2}  \sqrt{P_t \| g - \score_t^\mathcal{F}\|^2} + C (1-e^{-2t})^2 \left( P_t \|g-\score_t^\mathcal{F}\|^2 +   P_t \|\score_t^\mathcal{F}-\score_t^\star\|^2\right).
\end{align*}
Following the computations of page 207 \cite{bakry2014} and Lemma~\ref{lem:integrationByParts}, we have 
\begin{align*}
P_t \|g-\score_t^\mathcal{F}\|^2& = \|P_t(g-\score_t^\mathcal{F})\|^2 + 2\int_0^t P_{t-s} \| \nabla P_{s} (g-\score_t^\mathcal{F})\|^2 \, ds \\
& \leq \|P_t(g-\score_t^\mathcal{F})\|^2 + 2\int_0^t \frac{D e^{-2s}}{\sqrt{1-e^{-2s}}} \sqrt{P_{t} \| \nabla(g-\score_t^\mathcal{F})\|^2} \sqrt{P_{t} \|(g-\score_t^\mathcal{F})\|^2} \, ds \\
& \leq \|P_t(g-\score_t^\mathcal{F})\|^2 + 2D \sqrt{1-e^{-2t}} \sqrt{P_{t} \| \nabla(g-\score_t^\mathcal{F})\|^2} \sqrt{P_{t} \|(g-\score_t^\mathcal{F})\|^2} \\
& \leq \|P_t(g-\score_t^\mathcal{F})\|^2 + \frac{4R D}{\sqrt{1-e^{-2t}}} \sqrt{P_{t} \|(g-\score_t^\mathcal{F})\|^2},
\end{align*}
where the last line follows from $\| \nabla (g - \score_t^\mathcal{F})\|_{L^\infty(\R^D)} \leq \frac{R}{1-e^{-2t}}$. Applying similar inequalities to $P_t \|\score_t^\mathcal{F}-\score_t^\star\|^2$, we obtain 
\begin{multline*}
|\tilde{l}_{g,t}| \leq C   (1-e^{-2t})^{3/2}  \left(\sqrt{ P_t \| g - \score_t^\mathcal{F}\|^2 + P_t \|\score_t^\mathcal{F} - \score_t^\star\|^2}\right)  \\
+ C(1-e^{-2t})^2 \left(\|P_t(g-\score_t^\mathcal{F})\|^2 +  \|P_t (\score_t^\mathcal{F}-\score_t^\star)\|^2 \right).
\end{multline*}
Therefore,
\begin{multline*}
\|\tilde{l}_{g,t}\|^2_{L^2(\mu)} \leq  C   (1-e^{-2t})^{3}  \left(\| g - \score_t^\mathcal{F}\|^2_{L^2(\mu_t)}  + \|\score_t^\mathcal{F} - \score_t^\star\|^2_{L^2(\mu_t)} \right) \\ 
+ C (1-e^{-2t})^4 \left(\|P_t(g-\score_t^\mathcal{F})\|^4_{L^4(\mu)} +  \|P_t (\score_t^\mathcal{F}-\score_t^\star)\|^4_{L^4(\mu)} \right).
\end{multline*}
Since the density of $\mu$ is bounded from above and below, we can use Gagliardo–Nirenberg interpolation inequality (see \cite[Theorem 3.70]{aubin}) to obtain that there exists $C > 0$ such that 
\begin{align*}
\|P_t (g-\score_t^\mathcal{F})\|^4_{L^4(\mu)} & \leq C \left\| \nabla^{s+1} P_t (g-\score_t^\mathcal{F}) \right\|_{L^\infty(\mathcal{M})}^{\frac{2d}{2(s+1)+d}} \|P_t  (g - \score_t^\mathcal{F})\|_{L^2{(\mu)}}^{4-\frac{2d}{2(s+1)+d}} \\
& \leq \frac{C \|g - \score_t^\mathcal{F}\|_{L^2{(\mu_t)}}^{4-\frac{2d}{2(s+1)+d}}}{(1-e^{-2t})^{\frac{2d}{2(s+1)+d}}} ,
\end{align*}
and a similar inequality holds for $\|P_t (\score_t^\mathcal{F}-\score_t^\star)\|^4_{L^4(\mu)}$. In other words, letting 
\[
L(g,t) = (1-e^{-2t})^2 (\|g - \score_t^\mathcal{F}\|_{L^2(\mu_t)}^2 + \|g - \score_t^\star\|_{L^2(\mu_t)}^2)
\]
and
\[
\sigma(g,t)^2 = C L(g,t) \max\left(1-e^{-2t} , L(g,t)^{\frac{2(s+1)}{2(s+1)+d}}\right),
\]
we have 
\begin{equation}
\|\tilde{l}_{g,t}\|^2_{L^2(\mu)} \leq \sigma(g,t)^2.
\end{equation}

\noindent \textbf{Step 2:} We can now conclude the proof using \eqref{etape16}. Let $r > 0, \mathcal{F}_r =\left\{ g \in \mathcal{F} \mid \sigma(g,t) \leq  r \right\}$ and 
\[
S = \frac{1}{\sqrt{n}} \E\left[\sup_{g\in \mathcal{F}_r} \left|\sum_{i=1}^n  \tilde{l}_{g,t}(X_i) - \E[\tilde{l}_{g,t}(X_i)]\right| \right].
\]
Following the arguments used in Section~\ref{prop:variance}, we can show that 
\[
\E[S] \leq C r E_r,
\]
where $E_r$ is defined in (\ref{eq:erdiffusion}).
Therefore, we can use \cite[Theorem 13.19]{Massart} with, following the Theorem notations,
\begin{itemize}
    \item $L(g,t) = (1-e^{-2t})^2 (\|g - \score_t^\mathcal{F}\|_{L^2(\mu_t)}^2 + \|g - \score_t^\star\|_{L^2(\mu_t)}^2)$;
    \item $\rho : u \rightarrow C u \max\left(\sqrt{1-e^{-2t}}, u^{^\frac{2(s+1)}{2(s+1)+d}}\right)$;
    \item $\psi : C rE_r$; 
    \item $\epsilon = \frac{1}{32 R^2}$
\end{itemize}
to obtain, with probability larger than $1 - e^{-x}$,
\begin{multline*}
 \frac{(1-e^{-2t})^2}{8 R^2} \left[ (\widehat{L}_t(\score^\mathcal{F}_t) - \widehat{L}_t(\hat{\score}_t)) -  (L_t(\score^\mathcal{F}_t) - L_t(\hat{\score}_t)) \right]  \leq \\
 \left( \frac{(1-e^{-2t})^2}{32 R^2} \left(\|\hat{\score}_t - \score_t^\mathcal{F}\|^2_{L^2(\mu_t)} + \|\score_t^\mathcal{F} - \score_t^\star\|^2_{L^2(\mu_t)}\right) + C r_\star^2 + \frac{C x}{ n}\right),
\end{multline*}
where $r_\star$ is the solution of $\sqrt{n} r^2 = C \score(r) E_{\score(r)}$, see also Exercise 13.42 \cite{Massart}. 
\begin{remark}
Theorem 13.19 \cite{Massart} normally requires the functions $\rho$ and $\psi$ to be sub-linear in the sense of page 388 \cite{Massart}. While this will be the case for $\psi$ we only have that $\rho$ is strictly sub-quadratic in a similar sense, which is sufficient to obtain the result.
\end{remark}
Now, by Lemma~\ref{lem:bias-variance},
\begin{align*}
\|\hat{\score}_t - \score_t^\mathcal{F}\|^2_{L^2(\mu_t)} &\leq 2 (\|\hat{\score}_t - \score_t\|^2_{L^2(\mu_t)} + \|\score_t^\mathcal{F} - \score_t^\star\|^2_{L^2(\mu_t)}) \\
& \leq  2 \left[ (\widehat{L}_t(\score^\mathcal{F}_t) - \widehat{L}_t(\hat{\score}_t)) -  (L_t(\score^\mathcal{F}_t) - L_t(\hat{\score}_t)) \right] + 4 \|\score_t^\mathcal{F} - \score_t^\star\|^2_{L^2(\mu_t)},
\end{align*}
so that 
\begin{multline*}
\frac{1}{16} \left[ (\widehat{L}_t(\score^\mathcal{F}_t) - \widehat{L}_t(\hat{\score}_t)) -  (L_t(\score^\mathcal{F}_t) - L_t(\hat{\score}_t)) \right] \leq  \\
\frac{5}{32} \|\score_t^\mathcal{F} - \score_t^\star\|^2_{L^2(\mu_t)} + \frac{C}{(1-e^{-2t})^2} \left(r_\star^2 + \frac{x}{ n}\right)
\end{multline*}
from which we deduce that 
\[
 (\widehat{L}_t(\score^\mathcal{F}_t) - \widehat{L}_t(\hat{\score}_t)) -  (L_t(\score^\mathcal{F}_t) - L_t(\hat{\score}_t)) \leq  
C \left(\|\score_t^\mathcal{F} - \score_t^\star\|^2_{L^2(\mu_t)} + \frac{1}{(1-e^{-2t})^2} \left(r_\star^2 + \frac{x}{ n}\right) \right).
\]
Finally, we conclude the proof by integrating over $x$ to obtain
\begin{multline*}
\E\left[ (\widehat{L}_t(\score^\mathcal{F}_t) - \widehat{L}_t(\hat{\score}_t)) -  (L_t(\score^\mathcal{F}_t) - L_t(\hat{\score}_t)) \right] \leq  \\
C \left(\|\score_t^\mathcal{F} - \score_t^\star\|^2_{L^2(\mu_t)} + \frac{1}{(1-e^{-2t})^2} \left(r_\star^2 + \frac{x}{ n}\right) \right).
\end{multline*}

\subsection{Proof of Lemma~\ref{lem:lossregularity}}
Let $t > 0, r > 0, 0 \leq k \leq s + 2 + \ell$. Denoting by $C$ a generic positive constant independent of $t, n$ and $r$, let 
\[
\mathcal{F}_r = \{g \in \mathcal{F} \mid (1-e^{-2t})^{3/2} (\|g - \score^\mathcal{F}_t\|_{L^2(\mu_t)} + \| \score^\mathcal{F}_t - \score_t^\star\|_{L^2(\mu_t)}) \leq C r\}.
\]
First, since $\score_t^\mathcal{F} \in \mathcal{F}_r$, applying the triangle inequality yields 
\begin{equation}
\label{eq:triangle}
\sup_{g \in \mathcal{F}_r} \|\nabla^k(l_{g,t} - l_{\score_t^\mathcal{F},t})\|_{L^2(\mu)} \leq \sup_{g \in \mathcal{F}_r} 4 \|\nabla^k (P_t \div g)\|_{L^2(\mu)} + 2 \|\nabla^k P_t (\|g\|^2)\|_{L^2(\mu)}.
\end{equation}

Let us first bound $\|\nabla^k (P_t \div g)\|_{L^2(\mu)}$ with $g \in \mathcal{F}_r$. If $k \leq s + \ell$, by Lemma~\ref{lem:integrationByParts},
\begin{align*}
\|\nabla^k (P_t \div g)\|_{L^2(\mu)} & =  e^{-kt} \|P_t \nabla^k (\div g)\|_{L^2(\mu)} \\
& \leq \|\nabla^k (\div g)\|_{L^\infty(\R^D)} \\
& \leq C \|g\|_{W^{s+1+\ell}} \\
& \leq \frac{C}{\window^\ell(1-e^{-2t})}.
\end{align*}
On the other hand, if $k > s +\ell$, using Lemma~\ref{lem:integrationByParts} once more yields 
\begin{align*}
\|\nabla^k P_t (\div g)\|_{L^2(\mu)} & = e^{-(k-2)t} \|\nabla P_t \left(\nabla^{k-2} (\div g)\right)\|_{L^2(\mu)} \\
& \leq \frac{C}{1-e^{-2t}} \left\|\sqrt{P_t\|\nabla^{k-2} (\div g) \|^2}\right\|_{L^2(\mu)} \\
& \leq \frac{C}{1-e^{-2t}} \|\nabla^{k-2} (\div g)\|_{L^\infty(\R^D)} \\
& \leq \frac{C}{\window^\ell (1-e^{-2t})^2},
\end{align*}
Therefore,
\begin{equation}
\label{eq:divbound}
\sup_{g \in \mathcal{F}_r} \|\nabla^k (P_t \div g)\|_{L^2(\mu)} \leq \frac{C}{\window^\ell (1-e^{-2t})^2}.
\end{equation}

Let us now bound $\|\nabla^k P_t (\|g\|^2)\|_{L^2(\mu)}$. Using the general Leibniz rule along with the triangle inequality, we obtain  
\[
\|\nabla^k P_t \|g\|^2\|_{L^2(\mu)}  = \left\| 2 e^{-(k-1)t} \nabla P_t \left(g \nabla^{k-1} g\right) + e^{-kt}  \sum_{j=1}^{k-1} \binom{k}{j}P_t \left(\nabla^{j}g \nabla^{k-j}g\right) \right\|_{L^2(\mu)} 
\]
and, by the triangle inequality,
\begin{multline*}
\|\nabla^k P_t \|g\|^2\|_{L^2(\mu)} \\
2 e^{-(k-1)t} \left\|\nabla P_t (g \nabla^{k-1} g)\right\|_{L^2(\mu)} + e^{-kt}  \sum_{j=1}^{k-1} \binom{k}{j}\left\|P_t  (\nabla^{j}g \nabla^{k-j}g) \right\|_{L^2(\mu)}.
\end{multline*}
Then, for any $1 \leq j \leq k-1$,
\[
\left\|P_t  (\nabla^{j}g \nabla^{k-j}g) \right\|_{L^2(\mu)} \leq \|\nabla^{j}g\|_{L^\infty(\R^D)} \|\nabla^{k-j}g\|_{L^\infty(\R^D)} \leq  \frac{R^2}{\window^\ell (1-e^{-2t})^2}.
\]
Furthermore, if $k \leq s + 1 + \ell$,
\begin{align*}
\left\|\nabla P_t (g \nabla^{k-1} g)\right\|_{L^2(\mu)} &\leq  \|g\|_{L^\infty(\R^D)} \|\nabla^{k}g\|_{L^\infty(\R^D)} +  \|\nabla g\|_{L^\infty(\R^D)} \|\nabla^{k-1}g\|_{L^\infty(\R^D)} \\
& \leq \frac{2R^2}{\window^\ell (1-e^{-2t})^2}.
\end{align*}
On the other hand, for $k = s + 2 + \ell$, using Lemma~\ref{lem:integrationByParts} once more gives
\begin{align*}
\left\|\nabla P_t (g \nabla^{k-1} g)\right\|_{L^2(\mu)} &\leq \sqrt{\frac{C}{1-e^{-2t}} }\left\|\sqrt{P_t \left(\left\|g \nabla^{k-1} g\right\|^2\right) }\right\|_{L^2(\mu)} \\
& \leq \sqrt{\frac{C}{1-e^{-2t}} } \left\|\sqrt{P_t \left(\left\|g\right\|^2 \left\|\nabla^{k-1} g\right\|^2\right)} \right\|_{L^2(\mu)} \\
& \leq \sqrt{\frac{C}{1-e^{-2t}} } \left\|\sqrt{P_t \left(\left\|g\right\|^2\right) }\right\|_{L^2(\mu)} \| \nabla^{k-1} g \|_{L^\infty(\R^D)} \\
& \leq \frac{C}{(1-e^{-2t})^{3/2} \window^\ell} \left\|\sqrt{P_t \left\|g\right\|^2}  \right\|_{L^2(\mu)}.
\end{align*}
Now, by definition of $P_t$, we have 
\[
\left\|\sqrt{P_t \left(\left\|g\right\|^2\right)} \right\|_{L^2(\mu)} = \|g\|_{L^2(\mu_t)}.
\]
Then, since $g \in \mathcal{F}_r$, 
\begin{align*}
\|g\|_{L^2(\mu_t)} & = \|g - \score_t^\mathcal{F} + \score_t^\mathcal{F} - \score_t^\star + \score_t^\star\|_{L^2(\mu_t)} \\
& \leq \|g - \score_t^\mathcal{F}\|_{L^2(\mu_t)} + \|\score_t^\mathcal{F} - \score_t^\star\|_{L^2(\mu_t)} + \|\score_t^\star\|_{L^2(\mu_t)} \\
& \leq \frac{C r}{(1-e^{-2t})^{3/2}} +\|\score_t^\star\|_{L^2(\mu_t)}.
\end{align*}
By Equation~\eqref{eq:conditionalscoredef}, we have 
\begin{align*}
\|\score_t^\star\|_{L^2(\mu_t)}^2 &= \E[\|\score_t^\star(\overrightarrow{X}_t)\|^2] 
 = \E\left[ \left\|\E\left[\frac{Z_t}{\sqrt{1-e^{-2t}}} \mid \overrightarrow{X}_t\right] \right\|^2 \right] \\
&  \leq \E\left[ \frac{\|Z_t\|^2}{1-e^{-2t}} \right] 
 \leq \frac{D}{1-e^{-2t}}.
\end{align*}
Therefore, 
\[
\left\|\nabla P_t (g \nabla^{k-1} g)\right\|_{L^2(\mu)} \leq  \frac{C}{(1-e^{-2t})^{2} \window^\ell}\left(1 + \frac{r}{1-e^{-2t}}\right)
\]
and 
\begin{equation}
\label{eq:gradientsquare}
\|\nabla^k P_t (\|g\|^2)\|_{L^2(\mu)} \leq  \frac{C}{(1-e^{-2t})^{2} \window^\ell}\left(1 + \frac{r}{1-e^{-2t}}\right).
\end{equation}

By combining Equations~(\ref{eq:triangle}), (\ref{eq:divbound}) and (\ref{eq:gradientsquare}), we obtain 
\[
\sup_{g \in \mathcal{F}_r} \|l_{g,t} - l_{\score_t^\mathcal{F},t}\|_{L^2(\mu)} \leq \frac{C}{(1-e^{-2t})^{2} \window^\ell}\left(1 + \frac{r}{1-e^{-2t}}\right).
\]

Let us now take $m \geq 0$ and $k \leq s+2$. By Lemma~\ref{lem:integrationByParts}, we have
\begin{align*}
\|\nabla^{k+m} \left(P_t \div g\right)\|_{L^2(\mu)} & = e^{-(k-2) t} \|\nabla^{2+m} \left(P_t \nabla^{k-2} \div g\right)\|_{L^2(\mu)} \\
& \leq \frac{C}{(e^{2t}-1)^{1+m/2}} \|\nabla^{k-2} \div g\|_{L^\infty(\R^D)} \\
& \leq \frac{C}{(e^{2t}-1)^{2+m/2}}.
\end{align*}
We can thus obtain, for any $k \leq s+2+m$,
\[
\|\nabla^{k} \left(P_t \div g\right)\|_{L^2(\mu)} \leq \frac{C}{(e^{2t}-1)^{2+m/2}}.
\]
Similarly,
\[
\|\nabla^k P_t (\|g\|^2)\|_{L^2(\mu)} \leq  \frac{C}{(1-e^{-2t})^{2+m/2}}\left(1 + \frac{r}{1-e^{-2t}}\right).
\]
Finally, combining these bounds with Equation~(\ref{eq:triangle}) then yields 
\[
\sup_{g \in \mathcal{F}_r} \|l_{g,t} - l_{\score_t^\mathcal{F},t}\|_{L^2(\mu)} \leq \frac{C}{(1-e^{-2t})^{2+m/2}}\left(1 + \frac{r}{1-e^{-2t}}\right).
\]

\subsection{Proof of Proposition~\ref{pro:final-variance}}

As mentioned in Section~\ref{sec:variancediffusion}, the Proposition follows from applying Lemma~\ref{lem:chaining} and bounding $r_\star$. To this end, we bound $r_t E_{r_t}$ by two functions $\psi_1$ and $\psi_2$ depending on the value of $r$. We then find the solution $r_{\star,1}$ of
\[
\sqrt{n} r^2 = \psi_1(r)
\]
and the solution $r_{\star,2}$ of
\[
\sqrt{n} r^2 = \psi_2(r).
\]
We then conclude by remarking that $r_\star$ must be smaller than the maximum between $r_{\star,1}$ and $r_{\star,2}$.

Let $r < \sqrt{1-e^{-2t}}^{1 - \frac{d}{2(s+1) + d}}$. We then have 
\[
r_t = C r \sqrt{1-e^{-2t}} 
\]
and, by Equation~(\ref{eq:bound1}) and \cite[Theorem 2]{edmunds},
\[
E_{r_t} \leq C \left(\window^\ell r \sqrt{1-e^{-2t}} \right)^{\frac{-d}{2(s+2+\ell)}}
\]
thus
\[
r_t E_{r_t} \leq  C r \sqrt{1-e^{-2t}}  \left(\window^\ell r \sqrt{1-e^{-2t}} \right)^{\frac{-d}{2(s+2+\ell)}} = \psi_1(r).
\]
We can then consider $r_{\star,1} = C \left(n^{s+2+\ell} \sqrt{1-e^{-2t}}^{d-2(s+2+\ell)} \window^{\ell d}\right)^{-\frac{1}{2(s+2+\ell)+d}} $ the solution of
\[
\sqrt{n} r^2 = \psi_1(r).
\]
On the other hand, if $r \geq \sqrt{1-e^{-2t}}^{1 - \frac{d}{2(s+1) + d}}$, then
\[
r_t = C r^{1+\frac{2(s+1)}{2(s+1)+d}}.
\]
Similarly, by Equation~(\ref{eq:bound1bis}),
\[
E_{r_t} \leq C \left(\window^\ell r^{1 + \frac{2(s+1)}{2(s+1)+d}} \right)^{\frac{-d}{2(s+2+\ell)}}
\]
thus
\[
r_t E_{r_t} \leq  C r^{1+\frac{2(s+1)}{2(s+1)+d}} \left(\window^\ell r^{1 + \frac{2(s+1)}{2(s+1)+d}} \right)^{\frac{-d}{2(s+2+\ell)}} = \psi_2(r).
\]
We can then consider $r_{\star,2} = C (n^{s+1+\ell} \window^{\ell d})^{-\frac{2(s+1)+d}{d(2(s+1+\ell) + 4(s+1)+d)}}$ the solution of 
\[
\sqrt{n} r^2 = \psi_2(r).
\]
We therefore obtained that 
\[
r_\star \leq \max(r_{\star,1}, r_{\star,2})
\]
and similar arguments using Equations~(\ref{eq:bound2}) and (\ref{eq:bound2bis}) are sufficient to conclude the proof.

\subsection{Proof of Corollary~\ref{cor:finalresult}}
In this proof, we denote by $C$ a generic positive constant depending only on $\mu$ and $D$. Let $(t_k)_{k \in \{0, \dots, N\}}$ such that $ \frac{1}{2} n^{-\frac{2 (s+1)}{2s +d}} \leq t_0 \leq n^{-\frac{2 (s+1)}{2s +d}}$, 
\[
\forall k \in \{1, \dots, N\}, t_k = 2 t_{k-1}
\]
and $t_N = T = \frac{s+1}{2s+d}\log(n)$. Remark that we must have $N \leq C \log(n)$. Let us introduce $(\tilde{X}_t)_{t \in [0,T]}$ defined by $X_0 \sim \gamma$ and 
\[
d  \tilde{X}_t = \big( \tilde{X}_t + 2 \score^\star_{T-t}( \tilde{X}_t)\big) dt + \sqrt{2} dW_t.
\]
For $t \geq 0$, we denote by $\tilde{\mu}_t$ the law of $\tilde{X}_{T -t}$ so that we can decompose the error in four terms:
\begin{multline*}
\E[W_1(\hat{\mu}^{SGM}_n, \mu)] \leq \E[W_1(\hat{\mu}^{SGM}_n,(\hat{\mu}^{SGM}_n)_{t_0})] + \E[W_1((\hat{\mu}^{SGM}_n)_{t_0}, \tilde{\mu}_{t_0})] \\
+ W_1(\tilde{\mu}_{t_0}, \mu_{t_0})  +  W_1(\mu_{t_0}, \mu).
\end{multline*}
The first and fourth terms correspond to an early stopping error and are easy to deal with. For the fourth term, by Equation~\ref{eq:OUconvo}, we directly have 
\[
W_1(\mu_{t_0}, \mu) \leq C \sqrt{t_0} \leq Cn^{-\frac{s+1}{2s+d}}.
\]
On the other hand, for the first term, remarking that $\|\tilde{\score}_t\|_{L^\infty(\R^D)} \leq \sqrt{\frac{4 \log(n)}{1-e^{-2t}}}$, using Ito's lemma yields
\[
W_1(\hat{\mu}^{SGM}_n,(\hat{\mu}^{SGM}_n)_{t_0}) \leq C \sqrt{t_0 \log(n)} \leq C \sqrt{\log(n)} n^{-\frac{s+1}{2s+d}}.
\]
To deal with the third term, since $T = \frac{s+1}{2s+d}\log(n)$, we can use Lemma D.6 \cite{oko} to obtain
\[
W_1(\tilde{\mu}_{t_0}, \mu_{t_0}) \leq C e^{-T} \leq C n^{-\frac{s+1}{2s+d}}.
\]
We are thus left with dealing with the second term. Applying Lemma D.7 \cite{oko} yields
\[
W_1(\hat{\mu}^{SGM}_n,\tilde{\mu}_{t_0}) \leq C \left(n^{-\frac{s+1}{2s+d}} \sqrt{\log(n)} + \sum_{k=1}^{N} \sqrt{t_k \log(n) \int_{t_{k-1}}^{t_k} \E[\|\tilde{\score}_t - \score^\star_t\|_{L^2(\mu_t)}^2] \, dt} \right).
\]
Since $N$ is of order $\log(n)$, we are thus left with showing
\begin{equation}
\label{eq:objective}
\forall k \in \{1, \dots, N\}, t_k \int_{t_{k-1}}^{t_k} \E[\|\tilde{\score}_t - \score^\star_t\|_{L^2(\mu_t)}^2] \, dt \leq C n^{-\frac{2(s+1)}{2s+d}}.
\end{equation}

Let us first take $k \in \{0, \dots, N\}$ such that $t_{k-1} \leq \frac{C}{\log(n)}$. Let us start by noting that 
\[
\|\tilde{\score}_t - \score^\star_t\|_{L^2(\mu_t)} = \|c(\hat{\score}_t) - \score^\star_t\|_{L^2(\mu_t)} \leq  \|c(\hat{\score}_t) - c(\score^\star_t)\|_{L^2(\mu_t)} + \|c(\score^\star_t) - \score^\star_t\|_{L^2(\mu_t)}.
\]
Then, since $c$ is an orthogonal projection on the ball with radius $\sqrt{\frac{\log(n)}{1-e^{-2t}}}$ which is convex,  
\[
\|c(\hat{\score}_t) - c(\score^\star_t)\| \leq \|\hat{\score_t} - \score^\star_t\|.
\]
On the other hand, by Equation~\eqref{eq:conditionalscoredef} along with Jensen's and Cauchy-Schwarz inequalities,
\begin{align*}
\|c(\score^\star_t) - \score^\star_t\|_{L^2(\mu_t)}^2 & \leq 2 \|\score^\star_t 1_{\|\score^\star\|^2 \geq \frac{4\log(n)}{1-e^{-2t}}}\|_{L^2(\mu_t)}^2\\
 & \leq \frac{2}{1-e^{-2t}} \E\left[\|\E[Z_t \mid \overrightarrow{X}_t]\|^2 1_{\|\E[Z_t \mid \overrightarrow{X}_t]\|^2 \geq 4\log(n)}\right] \\
 & \leq  \frac{2}{1-e^{-2t}} \E\left[\|\E[Z_t \mid \overrightarrow{X}_t]\|^2 \E[1_{\|Z_t \|^2 \geq 4\log(n)} \mid \overrightarrow{X}_t]\right] \\
 & \leq \frac{2 \E[\|Z_t\|^4]^{1/2}}{1-e^{-2t}} \mathbb{P}(\|Z_t\|^2 \geq 4 \log(n))^{1/2}  \\
 & \leq \frac{C \log(n)^{D/4-1/2}}{(1-e^{-2t})n}.
\end{align*}
Therefore, 
\[
t_k \int_{t_{k-1}}^{t_k} \E[\|\tilde{\score}_t - \score^\star_t\|_{L^2(\mu_t)}^2] \, dt \leq 2 t_k \int_{t_{k-1}}^{t_k} \E[\|\hat{\score}_t - \score^\star_t\|_{L^2(\mu_t)}^2] \, dt + C n^{-\frac{2(s+1)}{2s+d}}.
\]
Now, remember that $\window = n^{-\frac{1}{2s+d}}$. Thus, if $t_{k-1} \leq \frac{C}{\log(n)}$, then Theorem~\ref{thm:maindiffusion} gives 
\[
\int_{t_{k-1}}^{t_k} \E[\|\tilde{\score}_t - \score^\star_t\|_{L^2(\mu_t)}^2] \, dt \leq \int_{t_{k-1}}^{t_k} \frac{C}{(1-e^{-2t})^2} \left( n^{-\frac{2(s+1)}{2s+d}} + r_\star^2 \right) \, dt,
\]
where 
\begin{multline*}
r_\star^2 = \max\bigg( \left(n^{2(s+2+\ell)} (1-e^{-2t})^{d - 2(s+2+\ell)} \window^{2\ell d}\right)^{-\frac{1}{2(s+2+\ell)+d}}, \\
(n^{2(s+1+\ell)} \window^{2 \ell d})^{-\frac{2(s+1)+d}{d(2(s+1+\ell) + 4(s+1)+d)}}\bigg).
\end{multline*}
First, by the definition of the $(t_k)_{k \in \{0, \dots, n\}}$ and since $1-e^{-2t} \sim_{t \rightarrow 0} 2t$, we have 
\[
t_k \int_{t_{k-1}}^{t_k} \frac{Cn^{-\frac{2(s+1)}{2s+d}}}{(1-e^{-2t})^2}  \, dt \leq C n^{-\frac{2(s+1)}{2s+d}}.
\]
now, if $t_k \leq n^{-\frac{2}{2s +d}}$, we can take 
\begin{multline*}
r_\star^2 = \max\bigg( \left(n^{2(s+2+\ell)} (1-e^{-2t})^{d - 2(s+2+\ell)} \window^{2\ell d}\right)^{-\frac{1}{2(s+2+\ell)+d}},  \\ 
(n^{2(s+1+\ell)} \window^{2 \ell d})^{-\frac{2(s+1)+d}{d(2(s+1+\ell) + 4(s+1)+d)}}\bigg).
\end{multline*}
We then have
\begin{multline*}
t_k \int_{t_{k-1}}^{t_k} \frac{r_\star^2}{(1-e^{-2t})^2} \, dt \leq C \left(n^{2(s+2+\ell)} t_k^{d-2(s+2+\ell)} \window^{2 \ell d}\right)^{-\frac{1}{2(s+2+\ell)+d}}  \\
+ C (n^{2(s+1+\ell)} \window^{2\ell d})^{-\frac{2(s+1)+d}{d(2(s+1+\ell) + 4(s+1)+d)}}.
\end{multline*}
Then, since $\window = n^{-\frac{1}{2s + d}}$, $2(s+2+\ell) > d$ and $t_k \leq n^{-\frac{2}{2s+d}}$,
\begin{align*}
\left(n^{2(s+2+\ell)} t_k^{d - 2(s+2+\ell)} \window^{2 \ell d}\right)^{-\frac{1}{2(s+2+\ell)+d}} & = \left(n^{2(s+2+\ell)} n^{\frac{4(s+2+\ell) - 2d}{2s+d}} n^{-\frac{2 \ell d}{2s+d}}\right)^{-\frac{1}{2(s+2+\ell)+d}} \\
& = n^{-\frac{2(s+2+\ell)(2(s+1)+d) - 2d (1+\ell)}{(2s+d)(2(s+2+\ell) + d)}} \\
& = n^{-\frac{2(s+1)(2(s+1)+d) + 4(1+\ell)(s+1)}{(2s+d)(2(s+2+\ell) + d)}} \\
& = n^{-\frac{2(s+1)}{2s+d}}.
\end{align*}
On the other hand, 
\begin{align*}
(n^{2(s+1+\ell)} \window^{2 \ell d})^{-\frac{2(s+1)+d}{d(2(s+1+\ell) + 4(s+1)+d)}} & = n^{-\frac{2(s(2(s+1+\ell) +d)+ d)(2(s+1) + d)}{d(2s + d)(2(s+1+\ell) + 4(s+1) + d)}} \\
& = n^{-\frac{2s(2(s+1)+d)}{d(2s+d)} + \frac{2(4s(s+1) -d)(2(s+1)+d)}{d(2s+d)(2(s+1+\ell) + 4(s+1)+d)}} \\
& = n^{-\frac{2(s+1)}{2s+d} + \frac{2d - 4s(s+1)}{d(2s+d)} + \frac{2(4s(s+1) - d)(2(s+1)+d)}{d(2s+d)(2(s+1+\ell) + 4(s+1)+d)}}
\end{align*}
and, since $2s(s+1) > d$, provided $\ell$ is sufficiently large,
\[
(n^{2(s+1+\ell)} \window^{2 \ell d})^{-\frac{2(s+1)+d}{d(2(s+1+\ell) + 4(s+1)+d)}} \leq n^{-\frac{2(s+1)}{2s + d}}.
\]
Therefore, when $t_k \leq n^{-\frac{2}{2s+d}}$, we have shown that 
\[
t_k \int_{t_{k-1}}^{t_k} \frac{r_\star^2}{(1-e^{-2t})^2} \, dt \leq C n^{-\frac{2(s+1)}{2s+d}}.
\]
Let us now suppose $n^{-\frac{2}{2s+d}} < t_k \leq \frac{C}{\log(n)}$. Let $m$ be a sufficiently large integer. Since $2(s+1+m) > d$, we can then take
\begin{multline*}
r_\star^2 = \max\bigg( \left(n^{2(s+2+m)} (1-e^{-2t})^{d(1+m) - 2(s+2+m)} \right)^{-\frac{1}{2(s+2+m)+d}} , \\
(n^{2(s+1+m)} (1-e^{-2t})^{dm})^{-\frac{2(s+1)+d}{d(2(s+1+m) + 4(s+1)+d)}}\bigg).
\end{multline*}
We thus have
\begin{multline*}
t_k \int_{t_{k-1}}^{t_k} \frac{r_\star^2}{(1-e^{-2t})^2} \, dt \leq C \left(n^{2(s+2+m)} t_k^{d(1+m) - 2(s+2+m)} \right)^{-\frac{1}{2(s+2+m)+d}}  \\
+ C (n^{2(s+1+m)} t_k^{dm})^{-\frac{2(s+1)+d}{d(2(s+1+m) + 4(s+1)+d)}}.
\end{multline*}
Now, since $t_k \geq n^{-\frac{2}{2s+d}}$ and for $m$ large enough such that $2(s+1+m) < d(1+m)$ (remark that such a $m$ exists since $d \geq 3$), 
\begin{multline*}
 \left(n^{2(s+2+m)} t_k^{d(1+m) - 2(s+2+m)} \right)^{-\frac{1}{2(s+2+m)+d}} \leq \\
 \left(n^{s+2+m} n^{-\frac{d(1+m) - 2(s+2+m)}{2s+d}} \right)^{-\frac{2}{2(s+2+m)+d}} = n^{-\frac{2(s+1)}{2s+d}}
\end{multline*}
and, again since $2s(s+1) > d$ and $m$ is large enough,
\[
 (n^{2(s+1+m)} t_k^{dm})^{-\frac{2(s+1)+d}{d(2(s+1+m) + 4(s+1)+d)}} \leq (n^{2(s+1+m)} n^{-\frac{2dm}{2s+d}})^{-\frac{2(s+1)+d}{d(2(s+1+m) + 4(s+1)+d)}} \leq C n^{-\frac{2(s+1)}{2s+d}}.
\]
we obtain
\[
t_k \int_{t_{k-1}}^{t_k} \E\left[\|\tilde{\score}_t - \score^\star_t \|_{L^2(\mu_t)}^2\right] \, dt \leq C n^{-\frac{s+1}{2s+d}}.
\]
Thus, we have shown that, as long as $t_k \leq \frac{C}{\log(n)}$,
\[
t_k \int_{t_{k-1}}^{t_k} \E[\|\tilde{\score}_t - \score^\star_t\|_{L^2(\mu_t)}^2] \, dt \leq  C n^{-\frac{s+1}{2s+d}}.
\]
Let us finally suppose $t_k > \frac{C}{\log(n)}$. Take $m \in \mathbb{N}$ such that $2 m > d$ and $\frac{m}{2m+d} > \frac{s+1}{2s + d}$. Then, by Theorem~\ref{thm:maindiffusion} and Remark~\ref{rem:large-times}, we have 
\[
\int_{t_{k-1}}^{t_k} \E\left[\|\tilde{\score}_t - \score^\star_t \|_{L^2(\mu_t)}^2\right] \, dt \leq \int_{t_{k-1}}^{t_k}  \frac{C}{(1-e^{-2t})^2} \left(r_\star^2  + \frac{1}{n} \right) \, dt,
\]
where 
\[
r_\star^2 = \max\left( \left(n^{2(1+m)} (1-e^{-2t})^{d(1+m) - 2(1+m)} \right)^{-\frac{1}{2(1+m)+d}} ,  (n^{2m} (1-e^{-2t})^{dm})^{-\frac{1}{2m +d}}\right).
\]
And through similar computations as before, since $\frac{C}{\log(n)} \leq t_k \leq C \log(n)$ and $1-e^{-2t} \leq 1$
\begin{multline*}
t_k \int_{t_{k-1}}^{t_k} \frac{r_\star^2}{(1-e^{-2t})^2} \, dt \leq C \log(n) \left(n^{2(1+m)} \log(n)^{d(1+m) - 2(m+1))} \right)^{-\frac{1}{2(m+1)+d}} \\
+(n^{2m} \log(n)^{-dm})^{-\frac{1}{2m+d}} 
\end{multline*}
and since $\frac{m}{2m+d} > \frac{s+1}{2s + d}$,
\[
t_k \int_{t_{k-1}}^{t_k} \frac{r_\star^2}{(1-e^{-2t})^2} \, dt \leq C n^{-\frac{2(s+1)}{2s+ d}}.
\]
We have thus shown that Equation~\eqref{eq:objective} holds, concluding the proof.

\section*{Acknowledgments}

A preliminary exploration of these questions was done in the internship of Lê Nhat Sinh, under the supervision of the authors.
T.B. and V.C.T. are supported by Labex B\'ezout (ANR-10-LABX-58). V.C.T. is also partly supported by GdR GeoSto 3477.


\end{document}